\tikzset{
  dot/.style={
    circle,
    draw=black,
    fill=black,
    minimum size=1mm,
    inner sep=0mm
  },
  larrow/.style n args={1}{
    decoration={
      markings,
      mark=at position 0.5 with {\arrow{<};},
    },
    postaction=decorate
  },
  rarrow/.style n args={1}{
    decoration={
      markings,
      mark=at position 0.5 with {\arrow{>};},
    },
    postaction=decorate
  },
}
\newcommand{\icol}[2]{
  \left(\begin{smallmatrix}#1\\#2\end{smallmatrix}\right)%
}
\newcommand{\opval}[3]{
  \langle #1 \mathop{|} #2 \rangle_{#3}
}
\newcommand{\arr}{\rightarrow}
\newcommand{\act}{\odot}
\newcommand{\Lens}{\mathit{Lens}}
\newcommand{\MLens}{\mathit{MLens}}
\newcommand{\Optic}{\mathit{Optic}}
\newcommand{\Tamb}{\mathit{Tamb}}
\newcommand{\Prof}{\mathit{Prof}}
\newcommand{\strength}{\mathit{strength}}
\newcommand\pure[1]{\ensuremath{%
   {}^{\scriptscriptstyle\bm{\lceil}}\mkern-1mu%
   #1%
   {}^{\scriptscriptstyle\bm{\rceil}}}}
\title{String Diagrams for Optics}
\author{Guillaume Boisseau}{University of Oxford, United Kingdom}{}{https://orcid.org/0000-0001-5244-893X}{Work funded by the EPSRC}
\authorrunning{G. Boisseau}
\keywords{Optic, string diagram, lens, category theory, Yoneda lemma}
\begin{document}
\maketitle

\begin{abstract}

Optics are a data representation for compositional data access, with
lenses as a popular special case. Hedges has presented a diagrammatic
calculus for lenses, but in a way that does not generalize to other
classes of optic. We present a calculus that works for all optics, not
just lenses; this is done by embedding optics into their presheaf
category, which naturally features string diagrams. We apply our
calculus to the common case of lenses, extend it to effectful lenses,
and explore how the laws of optics manifest in this setting.

\end{abstract}

\hypertarget{introduction}{%
\section{Introduction}\label{introduction}}

\emph{Optics} are a versatile categorical structure. Their best-known
special case, \emph{lenses}, have found uses in a variety of contexts,
from machine learning to game theory
\cite{hedgesLensesPhilosophers2018}. Their more general
instantiations have been studied in the context of bidirectional data
transformations \cite{rileyCategoriesOptics2018}. In all cases,
their main feature of interest is their composability and their peculiar
bidirectional information flow.

In the interest of making them easier to represent and manipulate,
authors often spontaneously use diagrams to construct instances of
optics \cite{profunctor_optics,rileyCategoriesOptics2018}. These
diagrams are usually informal, with one notable exception in the work of
Hedges \cite{coherence_for_lenses_and_open_games} on diagrams for
lenses. Hedges' diagrammatic calculus however assumes a lot of structure
on the underlying categories, in a way that doesn't extend to more
general optics.

Here we propose instead a different approach that embeds optics into a
larger space (namely its presheaf category) that naturally has string
diagrams. Not only does this work for the most general optics, but all
the diagrammatic gadgets follow naturally from the embedding, and it
even allows for useful diagrams that would not be expressible in the
category \(\Optic\) alone.

\hypertarget{background}{%
\section{Background}\label{background}}

We fix a monoidal category \((M, \otimes, I, \lambda, \mu, a)\)
throughout the paper.

We assume readers are familiar with \emph{coends}. For an introduction
to the material relevant to the study of optics, see \cite[Chapter
2]{romanProfunctorOpticsTraversals2020}.

\begin{note*}

We will prefer diagrammatic order for composition, using the symbol
\(\fatsemi\).

\end{note*}

\hypertarget{actegories}{%
\subsection{Actegories}\label{actegories}}

\begin{definition}[{\cite{nlab:actegory}}]

An \(M\)-actegory (contraction of ``action'' and ``category'') is a
category \(C\) equipped with a functor \({\act_C}: M \times C \arr C\)
(the ``action'') and two natural structure isomorphisms
\(\lambda_x: I \act_C x \xrightarrow{\sim} x\) and
\(a_{m,n,x}: (m \otimes n) \act_C x \xrightarrow{\sim} m \act_C (n \act_C x)\)
that satisfy compatibility axioms with the monoidal structure of \(M\).

\end{definition}

We will drop the subscripts when the relevant actegory is clear from
context. The naming of the structure morphisms clashes with those of
\(M\) on purpose:

\begin{proposition}

\(M\) has canonically the structure of an \(M\)-actegory, with
\(\act_M = \otimes\), and \(\lambda\) and \(a\) as the actegory
structure morphisms.

\end{proposition}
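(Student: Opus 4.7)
The plan is to verify that each piece of data required for an $M$-actegory is already present in the monoidal structure of $M$, and that the coherence axioms for an actegory reduce, under the identification $\act_M = \otimes$, to the coherence axioms of the monoidal category $M$ itself.

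First I would check the data: the functor $\act_M = \otimes : M \times M \arr M$ is the tensor product, which is a functor by assumption that $M$ is monoidal. The natural isomorphism $\lambda_x : I \act_M x \xrightarrow{\sim} x$ is literally the left unitor of $M$, and $a_{m,n,x} : (m \otimes n) \act_M x \xrightarrow{\sim} m \act_M (n \act_M x)$ is the associator of $M$, both of which are natural isomorphisms by assumption. So all the structure is available for free; the only work is in the axioms.

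Next I would unfold the compatibility axioms of an actegory. These assert, roughly, (a) a pentagon relating $a$ applied to a fourfold product, with the monoidal associator $\mu$ on the first three factors, and (b) a triangle relating $a_{m,I,x}$, $\lambda$ on $I\act x$, and the right unitor of $m \otimes I$. Under the identification $\act_M = \otimes$, $a$ is the monoidal associator, and these diagrams become exactly Mac Lane's pentagon and triangle coherence diagrams for the monoidal category $M$. They therefore commute by assumption.

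The only subtlety — and the main thing I would be careful about — is matching the orientation and labeling conventions of the actegory axioms in \cite{nlab:actegory} with those of Mac Lane's axioms for $M$, since in the actegory setting one distinguishes the $\otimes$ of $M$ from the $\act$ on $C$, whereas here they coincide. Once the conventions are aligned, no calculation is needed beyond pointing at the monoidal coherence axioms for $M$. I would therefore present the proof as a brief verification listing each actegory axiom and observing that it is an instance of a monoidal axiom of $M$.
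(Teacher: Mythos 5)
Your proposal is correct: the paper states this proposition without proof, treating it as canonical, and your verification --- that the data of the actegory is literally the monoidal data of $M$ and that the actegory coherence axioms specialise to Mac Lane's pentagon and triangle --- is exactly the standard argument the paper is implicitly relying on. Your caution about matching the orientation conventions of the axioms in the two settings is sensible but does not change the substance.
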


In what follows, when we use \(M\) as an \(M\)-actegory, we assume this
canonical structure.

\hypertarget{optics}{%
\subsection{Optics}\label{optics}}

\begin{definition}[{\cite[Proposition 3.1.1]{romanProfunctorOpticsTraversals2020}}]

Given two \(M\)-actegories \(C\) and \(D\), we construct the category
\(\Optic_{C,D}\) as follows: objects are pairs \(\icol{x}{u}\) where
\(x: C\) and \(u: D\), and arrows are elements of the set

\[
\Optic_{C,D}(\icol{x}{u}, \icol{y}{v}) := \int^{m: M} C(x, m \act_C y) \times D(m \act_D v, u)
\]

Given \(\alpha: C(x, m \act_C y)\) and \(\beta: D(m \act_D v, u)\), we
will denote the corresponding arrow by \(\opval{\alpha}{\beta}{m}\).
Composition and identities are defined componentwise in the expected
way; see \cite{romanProfunctorOpticsTraversals2020} for more
details.

\end{definition}

\begin{note*}

Expanding the definition of coends in \(Set\), we get that the coend
above denotes the set of pairs \(\opval{\alpha}{\beta}{m}\) with
\(\alpha: C(x, m \act_C y)\) and \(\beta: D(m \act_D v, u)\), quotiented
by the equation
\(\opval{\alpha \fatsemi (f \act_C y)}{\beta}{m} = \opval{\alpha}{(f \act_D v) \fatsemi \beta}{n}\)
for \(f: M(n, m)\).

\end{note*}

Except in special cases, this category is not monoidal. This prevents us
from having string diagrams in the usual way. We will see how to work
around this limitation in the rest of the paper.

\begin{example}

\label{example_lenses} The canonical example of optics are lenses. They
arise when \(C = D = M\) and the monoidal structure of \(C\) is
cartesian. We get:

\[
\Lens_C(\icol{x}{u}, \icol{y}{v}) := \int^{c: C} C(x, c \times y) \times C(c \times v, u)
\]

While this presentation is pleasantly symmetrical, lenses are usually
described as a pair of functions without this unfamiliar coend. We can
in fact calculate that both presentations are equivalent:

\[
\begin{aligned}
\Lens_C(\icol{x}{u}, \icol{y}{v})
&= \int^{c: C} C(x, c \times y) \times C(c \times v, u)
\\ &\cong \int^{c: C} C(x, y) \times C(x, c) \times C(c \times v, u)
\\ &\cong C(x, y) \times \int^{c: C} C(x, c) \times C(c \times v, u)
\\ &\cong C(x, y) \times C(x \times v, u)
\end{aligned}
\]

We recover the usual formulation: a lens from \(\icol{x}{u}\) to
\(\icol{y}{v}\) is a pair of functions \(get: x \arr y\) and
\(put: x \times v \arr u\). The intuition is that \(get\) extracts some
\(y\) from a datum \(x\), and \(put\) allows replacing that \(y\) by a
new \(v\), yielding an updated datum \(u\). It is often the case that
\(x = u\) and \(y = v\), making this intuition clearer, but having
distinct types allows for more flexibility.

A concrete example of a lens that gives access to a field of a record
can be written in Haskell:

\begin{verbatim}
data Lens x u y v = L (x -> y) (x -> v -> u)

data Person = P { name :: String, address :: String }
personName :: Lens Person Person String String
personName = L get put
   where get (P name _) = name
         put (P _ address) name = P name address
\end{verbatim}

The case for distinct types is well illustrated on tuples:

\begin{verbatim}
tupleSnd :: Lens (a, b) (a, c) b c
tupleSnd = L get put
   where get (_, b) = b
         put (a, _) c = (a, c)
\end{verbatim}

\lipicsEnd

\end{example}

\hypertarget{tambara-modules}{%
\subsection{Tambara Modules}\label{tambara-modules}}

\begin{definition}[{\cite[Proposition 5.1.1]{romanProfunctorOpticsTraversals2020}}]

Given two \(M\)-actegories \(C\) and \(D\), we construct the category
\(\Tamb_{C,D}\) as follows: objects are (pro)functors
\(P: C^{op} \times D \arr Set\) equipped with a natural transformation
\(\strength : \int_{m: M} P(a, b) \arr P(m \act_C a, m \act_D b)\)
compatible with the actegory structures; arrows are
\(\strength\)-preserving natural transformations.

\end{definition}

This generalizes the usual notion of strength for a profunctor.

\begin{definition}

We construct the bicategory \(\Tamb\) as follows: objects are
\(M\)-actegories; Hom-categories are the categories \(\Tamb_{C,D}\).

It inherits its bicategorical structure from the bicategory \(\Prof\) of
profunctors: the identities are the hom-profunctors
\(C(\mathord{-}, \mathord{=})\), and the tensor (horizontal composition)
is profunctor composition, defined as usual as follows:

\[
(P \otimes Q)(a, c) = \int^{b} P(a, b) \times Q(b, c)
\]

\end{definition}

\begin{note*}

\(\Prof\) and \(\Tamb\) share in fact a lot of structure. In a sense
\(\Tamb\) is the analogue of \(\Prof\) for \(M\)-actegories, and we will
see that like \(\Prof\) it supports a rich diagrammatic calculus.

\end{note*}

Our interest in Tambara modules comes from the following strong
relationship with optics:

\begin{theorem}[{\cite[Proposition 5.5.2]{romanProfunctorOpticsTraversals2020}}]

\label{psh_op_is_tamb} \([\Optic_{C, D}^{op}, Set] \cong \Tamb_{C, D}\)

\end{theorem}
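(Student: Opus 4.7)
The plan is to exhibit mutually inverse functors $\Phi : \Tamb_{C,D} \to [\Optic_{C,D}^{op}, Set]$ and $\Psi$ in the opposite direction. The guiding observation is that an optic $\opval{\alpha}{\beta}{m}$, modulo the coend identification, is precisely the data of a residual $m$ together with morphisms $\alpha$ and $\beta$ routed through it---exactly what a Tambara module's strength combined with its profunctor action knows how to consume.

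For the forward direction, given a Tambara module $(P, \strength)$, we set $\Phi(P)(\icol{x}{u}) := P(x, u)$ and let each optic $\opval{\alpha}{\beta}{m} : \icol{x}{u} \to \icol{y}{v}$ act as the composite
\[
P(y, v) \xrightarrow{\strength_m} P(m \act y, m \act v) \xrightarrow{P(\alpha, \beta)} P(x, u).
\]
The essential check is well-definedness on the coend: the relation $\opval{\alpha \fatsemi (f \act y)}{\beta}{m} = \opval{\alpha}{(f \act v) \fatsemi \beta}{n}$ unfolds, after applying functoriality of $P$, to exactly the dinaturality of $\strength$ in $m$. Functoriality with respect to optic composition and identities then follows from the compatibility axioms of $\strength$ with the associator $a$ and the unitor $\lambda$, and a $\strength$-preserving natural transformation of Tambara modules transports to a natural transformation of presheaves essentially by definition.

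For the backward direction, given $F : \Optic_{C,D}^{op} \to Set$, we set $\Psi(F)(x, u) := F(\icol{x}{u})$. The profunctor structure on $\Psi(F)$ comes from applying $F$ to the ``trivial'' optics $\opval{f}{g}{I} : \icol{x'}{u'} \to \icol{x}{u}$ built from $f : x' \to x$ and $g : u \to u'$ via the unitors. The strength at $m$ comes from applying $F$ to the ``canonical'' optics $\opval{\mathrm{id}}{\mathrm{id}}{m} : \icol{m \act x}{m \act u} \to \icol{x}{u}$; its dinaturality in $m$ and its compatibility with $\lambda$ and $a$ reduce to equalities between such optics in $\Optic$, forced by the coend relation and the actegory coherences.

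The main obstacle is the bookkeeping for showing $\Phi$ and $\Psi$ are mutually inverse. The linchpin is the factorization, valid in $\Optic$ up to the unitor, of any optic $\opval{\alpha}{\beta}{m}$ as the composite $\opval{\alpha}{\beta}{I} \fatsemi \opval{\mathrm{id}}{\mathrm{id}}{m}$ of a trivial optic followed by a canonical one. This ensures that a presheaf $F$ is fully determined by its values on those two families---i.e.\ by the profunctor-and-strength data that $\Psi$ extracts---yielding $\Phi\Psi \cong \mathrm{id}$; conversely, $\Psi\Phi$ manifestly reproduces a Tambara module's data from $\Phi(P)$ evaluated on trivial and canonical optics. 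Once this factorization is in hand, the remaining verifications are tedious but mechanical.
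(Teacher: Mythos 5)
Your proposal is correct, but note that the paper does not actually prove this theorem: it defers entirely to \cite[Proposition 5.5.2]{romanProfunctorOpticsTraversals2020} and, via that, to Pastro and Street's \cite[Proposition 6.1]{doubles_for_monoidal_categories}. What you have written is, in outline, the standard argument from those references, so the comparison is not with a proof in this paper but with its later machinery --- and there the match is striking: your ``linchpin'' factorization of $\opval{\alpha}{\beta}{m}$ as a trivial optic $\opval{\alpha\fatsemi\lambda^{-1}}{\lambda\fatsemi\beta}{I}$ followed by the canonical optic $\opval{\mathrm{id}}{\mathrm{id}}{m}$ is exactly what the paper proves as \autoref{representation_theorem}, and the appendix computation there is the coend calculation your sketch would need to make this step rigorous. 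The individual claims all check out: $\strength_m \fatsemi P(\alpha,\beta)$ descends to the coend precisely by extranaturality of $\strength$ in $m$; the extranaturality of $\Psi(F)$'s strength is forced by the instance $\opval{f \act x}{\mathrm{id}}{m} = \opval{\mathrm{id}}{f \act u}{n}$ of the coend relation; and the two composites $\Phi\Psi$, $\Psi\Phi$ reproduce the original data on the nose. Two small points deserve to be made explicit rather than left implicit. First, functoriality of $\Phi(P)$ under optic composition needs not only the compatibility of $\strength$ with $a$ and $\lambda$ but also the naturality of each $\strength_m$ as a transformation of profunctors, in order to slide $P(\alpha',\beta')$ past $\strength_m$ before invoking $\strength_n \fatsemi \strength_m \cong \strength_{m\otimes n}$. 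Second, your treatment of morphisms is one-sided: for an isomorphism of categories you also need the converse direction, namely that a natural transformation of presheaves restricts along trivial optics to a profunctor morphism and along the optics $\opval{\mathrm{id}}{\mathrm{id}}{m}$ to a $\strength$-preserving one. Neither point is a gap in the idea; both are routine given your setup.
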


\begin{proof}

The proof can be found in \cite[Proposition
5.5.2]{romanProfunctorOpticsTraversals2020}, but initially comes from
\cite[Proposition 6.1]{doubles_for_monoidal_categories} in the
special case where \(M = C = D\), along with more results on the
structure of both of those categories.

\end{proof}

\hypertarget{diagrams-for-tambara-modules}{%
\section{Diagrams for Tambara
Modules}\label{diagrams-for-tambara-modules}}

\hypertarget{basics}{%
\subsection{Basics}\label{basics}}

As in any bicategory, cells in \(\Tamb\) can be represented as diagrams,
as follows:

A 0-cell (an \(M\)-actegory) is represented as a planar region delimited
by the other types of cells. For technical reasons we will not represent
them in what follows, but it should be kept in mind that 1-cells can
only be composed if their types match.

A 1-cell \(P: \Tamb_{C, D}\) is represented as a wire, with \(C\) above
and \(D\) below:

\begin{tikzpicture}\matrix[column sep=3mm]{\node at (-0.1,0.0) [anchor=east] {\ensuremath{P}};
\draw[] (-0.1,0.0) to[out=0, in=180] (0.0,0.0);
\path (0.0,0.0) node[coordinate] (n0) {};
&
&
\node at (0.1,0.0) [anchor=west] {\ensuremath{P}};
\draw[] (0.0,0.0) to[out=0, in=180] (0.1,0.0);
\path (0.0,0.0) node[coordinate] (n1) {};
\\};\draw[] (n0) to[out=0, in=180] (n1);
\end{tikzpicture}

Tensoring (1-cell composition) is vertical juxtaposition (for
\(P: \Tamb_{C, D}\) and \(Q: \Tamb_{D, E}\)):

\begin{equation*}\begin{gathered}\begin{tikzpicture}\matrix[column sep=3mm]{\node at (-0.1,0.0) [anchor=east] {\ensuremath{P\otimes{}Q}};
\draw[] (-0.1,0.0) to[out=0, in=180] (0.0,0.0);
\path (0.0,0.0) node[coordinate] (n0) {};
&
&
\node at (0.1,0.0) [anchor=west] {\ensuremath{P\otimes{}Q}};
\draw[] (0.0,0.0) to[out=0, in=180] (0.1,0.0);
\path (0.0,0.0) node[coordinate] (n1) {};
\\};\draw[] (n0) to[out=0, in=180] (n1);
\end{tikzpicture}\end{gathered}\enskip =\enskip\begin{gathered}\begin{tikzpicture}\matrix[column sep=3mm]{\node at (-0.1,0.0) [anchor=east] {\ensuremath{Q}};
\draw[] (-0.1,0.0) to[out=0, in=180] (0.0,0.0);
\path (0.0,0.0) node[coordinate] (n0) {};
\node at (-0.1,0.6) [anchor=east] {\ensuremath{P}};
\draw[] (-0.1,0.6) to[out=0, in=180] (0.0,0.6);
\path (0.0,0.6) node[coordinate] (n1) {};
&
&
\node at (0.1,0.0) [anchor=west] {\ensuremath{Q}};
\draw[] (0.0,0.0) to[out=0, in=180] (0.1,0.0);
\path (0.0,0.0) node[coordinate] (n2) {};
\node at (0.1,0.6) [anchor=west] {\ensuremath{P}};
\draw[] (0.0,0.6) to[out=0, in=180] (0.1,0.6);
\path (0.0,0.6) node[coordinate] (n3) {};
\\};\draw[] (n0) to[out=0, in=180] (n2);
\draw[] (n1) to[out=0, in=180] (n3);
\end{tikzpicture}\end{gathered}\end{equation*}

A 2-cell \(\alpha: P \arr Q\) (for \(P, Q: \Tamb_{C, D}\)) is
represented as:

\begin{tikzpicture}\matrix[column sep=3mm]{\node at (-0.1,0.0) [anchor=east] {\ensuremath{P}};
\draw[] (-0.1,0.0) to[out=0, in=180] (0.0,0.0);
\path (0.0,0.0) node[coordinate] (n0) {};
&
\node at (0.0,0.0) [rectangle, draw, fill=white] (n1) {\ensuremath{\vphantom{\beta} \alpha}};
\path (n1.west) ++(-0.1,0.0) node[coordinate] (n2) {};
\draw[] (n2) to[out=0, in=180] (0.0,0.0);
\path (n1.east) ++(0.1,0.0) node[coordinate] (n3) {};
\draw[] (0.0,0.0) to[out=0, in=180] (n3);
\node at (0.0,0.0) [rectangle, draw, fill=white]  {\ensuremath{\vphantom{\beta} \alpha}};
&
\node at (0.1,0.0) [anchor=west] {\ensuremath{Q}};
\draw[] (0.0,0.0) to[out=0, in=180] (0.1,0.0);
\path (0.0,0.0) node[coordinate] (n4) {};
\\};\draw[] (n0) to[out=0, in=180] (n2);
\draw[] (n3) to[out=0, in=180] (n4);
\end{tikzpicture}

Composition is horizontal juxtaposition:

\begin{equation*}\begin{gathered}\begin{tikzpicture}\matrix[column sep=3mm]{\node at (-0.1,0.0) [anchor=east] {\ensuremath{P}};
\draw[] (-0.1,0.0) to[out=0, in=180] (0.0,0.0);
\path (0.0,0.0) node[coordinate] (n0) {};
&
\node at (0.0,0.0) [rectangle, draw, fill=white] (n1) {\ensuremath{\alpha{}\fatsemi\beta{}}};
\path (n1.west) ++(-0.1,0.0) node[coordinate] (n2) {};
\draw[] (n2) to[out=0, in=180] (0.0,0.0);
\path (n1.east) ++(0.1,0.0) node[coordinate] (n3) {};
\draw[] (0.0,0.0) to[out=0, in=180] (n3);
\node at (0.0,0.0) [rectangle, draw, fill=white]  {\ensuremath{\alpha{}\fatsemi\beta{}}};
&
\node at (0.1,0.0) [anchor=west] {\ensuremath{R}};
\draw[] (0.0,0.0) to[out=0, in=180] (0.1,0.0);
\path (0.0,0.0) node[coordinate] (n4) {};
\\};\draw[] (n0) to[out=0, in=180] (n2);
\draw[] (n3) to[out=0, in=180] (n4);
\end{tikzpicture}\end{gathered}\enskip =\enskip\begin{gathered}\begin{tikzpicture}\matrix[column sep=3mm]{\node at (-0.1,0.0) [anchor=east] {\ensuremath{P}};
\draw[] (-0.1,0.0) to[out=0, in=180] (0.0,0.0);
\path (0.0,0.0) node[coordinate] (n0) {};
&
\node at (0.0,0.0) [rectangle, draw, fill=white] (n1) {\ensuremath{\vphantom{\beta} \alpha}};
\path (n1.west) ++(-0.1,0.0) node[coordinate] (n2) {};
\draw[] (n2) to[out=0, in=180] (0.0,0.0);
\path (n1.east) ++(0.1,0.0) node[coordinate] (n3) {};
\draw[] (0.0,0.0) to[out=0, in=180] (n3);
\node at (0.0,0.0) [rectangle, draw, fill=white]  {\ensuremath{\vphantom{\beta} \alpha}};
&
\node at (0.0,0.0) [rectangle, draw, fill=white] (n4) {\ensuremath{\beta{}}};
\path (n4.west) ++(-0.1,0.0) node[coordinate] (n5) {};
\draw[] (n5) to[out=0, in=180] (0.0,0.0);
\path (n4.east) ++(0.1,0.0) node[coordinate] (n6) {};
\draw[] (0.0,0.0) to[out=0, in=180] (n6);
\node at (0.0,0.0) [rectangle, draw, fill=white]  {\ensuremath{\beta{}}};
&
\node at (0.1,0.0) [anchor=west] {\ensuremath{R}};
\draw[] (0.0,0.0) to[out=0, in=180] (0.1,0.0);
\path (0.0,0.0) node[coordinate] (n7) {};
\\};\draw[] (n0) to[out=0, in=180] (n2);
\draw[] (n3) to[out=0, in=180] (n5);
\draw[] (n6) to[out=0, in=180] (n7);
\end{tikzpicture}\end{gathered}\end{equation*}

and tensoring is vertical juxtaposition:

\begin{equation*}\begin{gathered}\begin{tikzpicture}\matrix[column sep=3mm]{\node at (-0.1,0.0) [anchor=east] {\ensuremath{P\otimes{}R}};
\draw[] (-0.1,0.0) to[out=0, in=180] (0.0,0.0);
\path (0.0,0.0) node[coordinate] (n0) {};
&
\node at (0.0,0.0) [rectangle, draw, fill=white] (n1) {\ensuremath{\alpha{}\otimes{}\beta{}}};
\path (n1.west) ++(-0.1,0.0) node[coordinate] (n2) {};
\draw[] (n2) to[out=0, in=180] (0.0,0.0);
\path (n1.east) ++(0.1,0.0) node[coordinate] (n3) {};
\draw[] (0.0,0.0) to[out=0, in=180] (n3);
\node at (0.0,0.0) [rectangle, draw, fill=white]  {\ensuremath{\alpha{}\otimes{}\beta{}}};
&
\node at (0.1,0.0) [anchor=west] {\ensuremath{Q\otimes{}S}};
\draw[] (0.0,0.0) to[out=0, in=180] (0.1,0.0);
\path (0.0,0.0) node[coordinate] (n4) {};
\\};\draw[] (n0) to[out=0, in=180] (n2);
\draw[] (n3) to[out=0, in=180] (n4);
\end{tikzpicture}\end{gathered}\enskip =\enskip\begin{gathered}\begin{tikzpicture}\matrix[column sep=3mm]{\node at (-0.1,0.0) [anchor=east] {\ensuremath{R}};
\draw[] (-0.1,0.0) to[out=0, in=180] (0.0,0.0);
\path (0.0,0.0) node[coordinate] (n0) {};
\node at (-0.1,0.6) [anchor=east] {\ensuremath{P}};
\draw[] (-0.1,0.6) to[out=0, in=180] (0.0,0.6);
\path (0.0,0.6) node[coordinate] (n1) {};
&
\node at (0.0,0.0) [rectangle, draw, fill=white] (n2) {\ensuremath{\beta{}}};
\path (n2.west) ++(-0.1,0.0) node[coordinate] (n3) {};
\draw[] (n3) to[out=0, in=180] (0.0,0.0);
\path (n2.east) ++(0.1,0.0) node[coordinate] (n4) {};
\draw[] (0.0,0.0) to[out=0, in=180] (n4);
\node at (0.0,0.0) [rectangle, draw, fill=white]  {\ensuremath{\beta{}}};
\node at (0.0,0.6) [rectangle, draw, fill=white] (n5) {\ensuremath{\alpha{}}};
\path (n5.west) ++(-0.1,0.0) node[coordinate] (n6) {};
\draw[] (n6) to[out=0, in=180] (0.0,0.6);
\path (n5.east) ++(0.1,0.0) node[coordinate] (n7) {};
\draw[] (0.0,0.6) to[out=0, in=180] (n7);
\node at (0.0,0.6) [rectangle, draw, fill=white]  {\ensuremath{\alpha{}}};
&
\node at (0.1,0.0) [anchor=west] {\ensuremath{S}};
\draw[] (0.0,0.0) to[out=0, in=180] (0.1,0.0);
\path (0.0,0.0) node[coordinate] (n8) {};
\node at (0.1,0.6) [anchor=west] {\ensuremath{Q}};
\draw[] (0.0,0.6) to[out=0, in=180] (0.1,0.6);
\path (0.0,0.6) node[coordinate] (n9) {};
\\};\draw[] (n0) to[out=0, in=180] (n3);
\draw[] (n1) to[out=0, in=180] (n6);
\draw[] (n4) to[out=0, in=180] (n8);
\draw[] (n7) to[out=0, in=180] (n9);
\end{tikzpicture}\end{gathered}\end{equation*}

For example, one could represent the following complex composition of
cells diagrammatically:

\begin{tikzpicture}\matrix[column sep=3mm]{\node at (-0.1,0.6) [anchor=east] {\ensuremath{P}};
\draw[] (-0.1,0.6) to[out=0, in=180] (0.0,0.6);
\path (0.0,0.6) node[coordinate] (n0) {};
&
\node at (0.0,0.6) [rectangle, draw, fill=white] (n1) {\ensuremath{\alpha{}}};
\path (n1.west) ++(-0.1,0.0) node[coordinate] (n2) {};
\draw[] (n2) to[out=0, in=180] (0.0,0.6);
\path (n1.east) ++(0.1,-0.6) node[coordinate] (n3) {};
\draw[] (0.0,0.6) to[out=down, in=180] (n3);
\path (n1.east) ++(0.1,0.0) node[coordinate] (n4) {};
\draw[] (0.0,0.6) to[out=0, in=180] (n4);
\path (n1.east) ++(0.1,0.6) node[coordinate] (n5) {};
\draw[] (0.0,0.6) to[out=up, in=180] (n5);
\node at (0.0,0.6) [rectangle, draw, fill=white]  {\ensuremath{\alpha{}}};
&
\node at (0.0,0.3) [rectangle, draw, fill=white] (n6) {\ensuremath{\epsilon{}}};
\path (n6.west) ++(-0.1,-0.3) node[coordinate] (n7) {};
\draw[] (n7) to[out=0, in=down] (0.0,0.3);
\path (n6.west) ++(-0.1,0.3) node[coordinate] (n8) {};
\draw[] (n8) to[out=0, in=up] (0.0,0.3);
\path (n6.east) ++(0.1,-0.3) node[coordinate] (n9) {};
\draw[] (0.0,0.3) to[out=down, in=180] (n9);
\path (n6.east) ++(0.1,0.3) node[coordinate] (n10) {};
\draw[] (0.0,0.3) to[out=up, in=180] (n10);
\node at (0.0,0.3) [rectangle, draw, fill=white]  {\ensuremath{\epsilon{}}};
&
\node at (0.1,0.0) [anchor=west] {\ensuremath{S}};
\draw[] (0.0,0.0) to[out=0, in=180] (0.1,0.0);
\path (0.0,0.0) node[coordinate] (n11) {};
\node at (0.1,0.6) [anchor=west] {\ensuremath{R}};
\draw[] (0.0,0.6) to[out=0, in=180] (0.1,0.6);
\path (0.0,0.6) node[coordinate] (n12) {};
\node at (0.1,1.2) [anchor=west] {\ensuremath{Q}};
\draw[] (0.0,1.2) to[out=0, in=180] (0.1,1.2);
\path (0.0,1.2) node[coordinate] (n13) {};
\\};\draw[] (n0) to[out=0, in=180] (n2);
\draw[] (n3) to[out=0, in=180] (n7);
\draw[] (n4) to[out=0, in=180] (n8);
\draw[] (n9) to[out=0, in=180] (n11);
\draw[] (n10) to[out=0, in=180] (n12);
\draw[] (n5) to[out=0, in=180] (n13);
\end{tikzpicture}

The axioms of bicategories ensure that we can interchange boxes like we
do in string diagrams for monoidal categories.

\hypertarget{oriented-wires}{%
\subsection{Oriented Wires}\label{oriented-wires}}

So far, this was common to any bicategory. We can now investigate
gadgets specific to \(\Tamb\).

Let us fix an \(M\)-actegory \(C\).

\begin{definition}

Given \(x: C\), let us define two profunctors
\(R_x := C(\mathord{-}, \mathord{=} \act_C x)\) and
\(L_x := C(\mathord{-} \act_C x, \mathord{=})\).

\end{definition}

\begin{proposition}

\(R_x\) is in \(\Tamb_{C, M}\) and \(L_x\) is in \(\Tamb_{M, C}\), where
\(M\) is taken with its canonical \(M\)-actegory structure.

\end{proposition}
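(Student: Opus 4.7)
The plan is to verify the two claims symmetrically: define the strength for each profunctor using the actegory associator, then check naturality and the actegory-compatibility axioms. I focus on $R_x$; the case of $L_x$ will be essentially dual.

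First I would observe that $R_x = C(\mathord{-}, \mathord{=} \act_C x): C^{op} \times M \to Set$ is indeed a profunctor of the right variance: it is contravariant in the first argument by precomposition in $C$, and covariant in the second because $m \mapsto m \act_C x$ is the composite $M \xrightarrow{- \act_C x} C$, and postcomposition in $C$ is covariant. Here the second variable lives in $M$ viewed as an $M$-actegory via $\otimes$, so $R_x$ has the correct type to be an object of $\Tamb_{C, M}$.

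Next I would define the strength. For $n, m: M$ and $c: C$, I need a map
\[
\strength_n : C(c, m \act_C x) \arr C(n \act_C c, (n \otimes m) \act_C x)
\]
natural in $c$ and $m$ and dinatural in $n$. Given $f: c \to m \act_C x$, I define $\strength_n(f) := (n \act_C f) \fatsemi a^{-1}_{n, m, x}$, where $a^{-1}_{n, m, x}: n \act_C (m \act_C x) \xrightarrow{\sim} (n \otimes m) \act_C x$ is the inverse actegory associator. Dually, for $L_x$ and $g: m \act_C x \to c$, I would set $\strength_n(g) := a_{n, m, x} \fatsemi (n \act_C g)$.

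The main work is then the coherence checks. Naturality in $c$ and $m$ reduces to functoriality of $n \act_C \mathord{-}$ together with naturality of $a$; dinaturality (giving a map out of the end $\int_n$) likewise follows from naturality of $a$ in $n$. The two actegory-compatibility axioms for strength — one asserting that strengthening by $I$ is the identity (modulo $\lambda$), the other that strengthening by $n' \otimes n$ agrees with strengthening first by $n$ and then by $n'$ — translate directly into the unit and pentagon coherence diagrams for the actegory $C$. I expect the hardest part to be the pentagon check: one has to track how two successive applications of $a^{-1}$ recombine into a single $a^{-1}$ for the tensored scalar, which is exactly the content of the actegory coherence axiom, so after chasing the diagram carefully the equation falls out. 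The case of $L_x$ is verified by the same argument with $a$ in place of $a^{-1}$ and pre/post-composition swapped.
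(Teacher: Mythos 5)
Your proposal is correct and follows essentially the same route as the paper, which simply notes that the action of the functor $(n \act_C \mathord{-})$ (composed with the actegory associator) furnishes the strength, with the coherence axioms reducing to the actegory coherence axioms. You have merely written out the details that the paper leaves implicit.
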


\begin{proof}

\(R_x\) is a profunctor \(C^{op} \times M \arr Set\). The action of the
(\(m \act_C \mathord{-}\)) functor provides it with a strength. The same
works for \(L_x\).

\end{proof}

\begin{proposition}

\(R_x\) extends to a functor \(R: C \arr \Tamb_{C, M}\), and \(L_x\)
extends to a functor \(L: C^{op} \arr \Tamb_{M, C}\)

\end{proposition}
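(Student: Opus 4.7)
The plan is to define the functorial action on morphisms in the obvious way, then verify the three conditions: naturality (in the profunctor sense), strength-preservation, and functoriality.

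For $R$, given $f: x \arr y$ in $C$, I define the component $(R_f)_{a,b}: R_x(a,b) \arr R_y(a,b)$ to be post-composition with $b \act_C f$, i.e., it sends $h: a \arr b \act_C x$ to $h \fatsemi (b \act_C f): a \arr b \act_C y$. Dually, for $L$, given $f: x \arr y$ in $C$, I define $(L_f)_{a,b}: L_y(a,b) \arr L_x(a,b)$ to be pre-composition with $a \act_C f$, sending $h: a \act_C y \arr b$ to $(a \act_C f) \fatsemi h$. The contravariance of $L$ on the $C^{op}$ side is built in because pre-composition is contravariant.

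Naturality of these families in $a$ and $b$ is immediate from associativity and functoriality of the hom-sets and of $\act_C$. Functoriality of $R$ and $L$ (preservation of identities and composition) reduces to the functoriality of $\act_C$ in its second argument, i.e., $b \act_C \mathrm{id}_x = \mathrm{id}_{b \act_C x}$ and $b \act_C (f \fatsemi g) = (b \act_C f) \fatsemi (b \act_C g)$.

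The substantive step is to check that $R_f$ and $L_f$ are strength-preserving, so that they are genuine 2-cells of $\Tamb$ rather than mere natural transformations of the underlying profunctors. For $R_f$, unfolding the definition of the strength on $R_x$ and $R_y$ (which acts as $h \mapsto (m \act_C h) \fatsemi a_{m,b,-}$), the square to check is that post-composing with $b \act_C f$ and then applying the strength of $R_y$ agrees with applying the strength of $R_x$ and then post-composing with $(m \otimes b) \act_C f$. After using functoriality of $m \act_C (-)$, this reduces precisely to the naturality square of the associator $a_{m,b,-}$ applied to $f$. The case of $L_f$ is analogous, using naturality of $a_{m,a,-}$ in its third slot on the source side. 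This is the one place a nontrivial actegory axiom is invoked; everything else is bookkeeping.
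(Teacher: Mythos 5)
Your proof is correct and fills in exactly what the paper leaves implicit (its proof is just ``straightforward from their definitions''): the action on morphisms by post-/pre-composition with $b \act_C f$, with strength-preservation reducing to naturality of the associator in its object slot. One cosmetic remark: with the paper's orientation $a_{m,n,x}: (m \otimes n) \act_C x \xrightarrow{\sim} m \act_C (n \act_C x)$, the strength of $R_x$ uses $a^{-1}_{m,b,x}$, so the square you check is naturality of $a^{-1}$; this changes nothing.
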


\begin{proof}

Straightforward from their definitions.

\end{proof}

\begin{proposition}

\label{R_respects_actegory} \(R\) and \(L\) respect the actegory
structures: \(R_I \cong L_I \cong M(-,=)\),
\(R_x \otimes R_m \cong R_{m \act x}\), and
\(L_m \otimes L_x \cong L_{m \act x}\).

\end{proposition}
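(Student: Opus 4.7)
Plan: All three isomorphisms are instances of profunctor calculations that combine the coend Yoneda (density) lemma with the actegory coherence axioms.

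First I would dispatch the unit cases $R_I \cong L_I \cong M(\mathord{-},\mathord{=})$. These are immediate from the right unitor of $M$ viewed as the canonical $M$-actegory over itself: $R_I(a,b) = M(a, b \otimes I) \cong M(a,b)$, and symmetrically $L_I(a,b) = M(a \otimes I, b) \cong M(a,b)$.

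For the compositional statement on $R$, I would expand the profunctor composition:
\[
(R_x \otimes R_m)(a,c) = \int^{b: M} C(a, b \act_C x) \times M(b, c \otimes m).
\]
The functor $b \mapsto C(a, b \act_C x)$ is covariant in $b$, so by the coend Yoneda lemma this coend reduces to $C(a, (c \otimes m) \act_C x)$. Applying the actegory associator then yields $C(a, c \act_C (m \act_C x)) = R_{m \act x}(a, c)$. The $L$ case is dual: the functor $b \mapsto C(b \act_C x, c)$ is contravariant in $b$, so co-Yoneda turns $\int^{b:M} M(a \otimes m, b) \times C(b \act_C x, c)$ into $C((a \otimes m) \act_C x, c)$, and the associator delivers $L_{m \act x}(a,c)$.

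The main obstacle is not the set-level isomorphisms, which are routine, but checking that these natural isomorphisms respect the Tambara strengths so as to lift from $\Prof$ to $\Tamb$. Happily, the strengths on both sides of each isomorphism are induced by the same action functor $n \act_C \mathord{-}$ together with the actegory structure isomorphisms, so strength-compatibility reduces to naturality of the associator and unitor together with the coherence axioms of the actegory. These verifications are straightforward diagram chases that I would indicate but not spell out in full.
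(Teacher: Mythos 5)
Your proposal is correct and follows essentially the same route as the paper's proof in Appendix \ref{proof_R_respects_actegory}: reduce the profunctor composites by the co-Yoneda lemma, then apply the actegory associator and unitor, deferring the (routine) check that the strengths agree. The only difference is presentational: you make the variance bookkeeping and the strength-compatibility issue explicit, where the paper simply computes and remarks that the strengths coincide.
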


\begin{proof}

See appendix \ref{proof_R_respects_actegory}.

\end{proof}

This justifies the following notation:

\begin{equation}\begin{gathered}\begin{tikzpicture}\matrix[column sep=3mm]{\node at (-0.1,0.0) [anchor=east] {\ensuremath{x}};
\draw[] (-0.1,0.0) to[out=0, in=180] (0.0,0.0);
\path (0.0,0.0) node[coordinate] (n0) {};
&
&
\node at (0.1,0.0) [anchor=west] {\ensuremath{x}};
\draw[] (0.0,0.0) to[out=0, in=180] (0.1,0.0);
\path (0.0,0.0) node[coordinate] (n1) {};
\\};\draw[postaction={decorate}, decoration={markings, mark=at position 0.5 with {\arrow[line width=0.2mm]{angle 90}}}] (n0) to[out=0, in=180] (n1);
\end{tikzpicture}\end{gathered}\enskip :=\enskip\begin{gathered}\begin{tikzpicture}\matrix[column sep=3mm]{\node at (-0.1,0.0) [anchor=east] {\ensuremath{R_x}};
\draw[] (-0.1,0.0) to[out=0, in=180] (0.0,0.0);
\path (0.0,0.0) node[coordinate] (n0) {};
&
&
\node at (0.1,0.0) [anchor=west] {\ensuremath{R_x}};
\draw[] (0.0,0.0) to[out=0, in=180] (0.1,0.0);
\path (0.0,0.0) node[coordinate] (n1) {};
\\};\draw[] (n0) to[out=0, in=180] (n1);
\end{tikzpicture}\end{gathered}\end{equation}

and

\begin{equation}\begin{gathered}\begin{tikzpicture}\matrix[column sep=3mm]{\node at (-0.1,0.0) [anchor=east] {\ensuremath{x}};
\draw[] (-0.1,0.0) to[out=0, in=180] (0.0,0.0);
\path (0.0,0.0) node[coordinate] (n0) {};
&
\node at (0.0,0.0) [rectangle, draw, fill=white] (n1) {\ensuremath{f}};
\path (n1.west) ++(-0.1,0.0) node[coordinate] (n2) {};
\draw[] (n2) to[out=0, in=180] (0.0,0.0);
\path (n1.east) ++(0.1,0.0) node[coordinate] (n3) {};
\draw[] (0.0,0.0) to[out=0, in=180] (n3);
\node at (0.0,0.0) [rectangle, draw, fill=white]  {\ensuremath{f}};
&
\node at (0.1,0.0) [anchor=west] {\ensuremath{y}};
\draw[] (0.0,0.0) to[out=0, in=180] (0.1,0.0);
\path (0.0,0.0) node[coordinate] (n4) {};
\\};\draw[postaction={decorate}, decoration={markings, mark=at position 0.5 with {\arrow[line width=0.2mm]{angle 90}}}] (n0) to[out=0, in=180] (n2);
\draw[postaction={decorate}, decoration={markings, mark=at position 0.5 with {\arrow[line width=0.2mm]{angle 90}}}] (n3) to[out=0, in=180] (n4);
\end{tikzpicture}\end{gathered}\enskip :=\enskip\begin{gathered}\begin{tikzpicture}\matrix[column sep=3mm]{\node at (-0.1,0.0) [anchor=east] {\ensuremath{R_x}};
\draw[] (-0.1,0.0) to[out=0, in=180] (0.0,0.0);
\path (0.0,0.0) node[coordinate] (n0) {};
&
\node at (0.0,0.0) [rectangle, draw, fill=white] (n1) {\ensuremath{R_f}};
\path (n1.west) ++(-0.1,0.0) node[coordinate] (n2) {};
\draw[] (n2) to[out=0, in=180] (0.0,0.0);
\path (n1.east) ++(0.1,0.0) node[coordinate] (n3) {};
\draw[] (0.0,0.0) to[out=0, in=180] (n3);
\node at (0.0,0.0) [rectangle, draw, fill=white]  {\ensuremath{R_f}};
&
\node at (0.1,0.0) [anchor=west] {\ensuremath{R_y}};
\draw[] (0.0,0.0) to[out=0, in=180] (0.1,0.0);
\path (0.0,0.0) node[coordinate] (n4) {};
\\};\draw[] (n0) to[out=0, in=180] (n2);
\draw[] (n3) to[out=0, in=180] (n4);
\end{tikzpicture}\end{gathered}\end{equation}

similarly

\begin{equation}\begin{gathered}\begin{tikzpicture}\matrix[column sep=3mm]{\node at (-0.1,0.0) [anchor=east] {\ensuremath{y}};
\draw[] (-0.1,0.0) to[out=0, in=180] (0.0,0.0);
\path (0.0,0.0) node[coordinate] (n0) {};
&
&
\node at (0.1,0.0) [anchor=west] {\ensuremath{y}};
\draw[] (0.0,0.0) to[out=0, in=180] (0.1,0.0);
\path (0.0,0.0) node[coordinate] (n1) {};
\\};\draw[postaction={decorate}, decoration={markings, mark=at position 0.5 with {\arrow[line width=0.2mm]{angle 90 reversed}}}] (n0) to[out=0, in=180] (n1);
\end{tikzpicture}\end{gathered}\enskip :=\enskip\begin{gathered}\begin{tikzpicture}\matrix[column sep=3mm]{\node at (-0.1,0.0) [anchor=east] {\ensuremath{L_y}};
\draw[] (-0.1,0.0) to[out=0, in=180] (0.0,0.0);
\path (0.0,0.0) node[coordinate] (n0) {};
&
&
\node at (0.1,0.0) [anchor=west] {\ensuremath{L_y}};
\draw[] (0.0,0.0) to[out=0, in=180] (0.1,0.0);
\path (0.0,0.0) node[coordinate] (n1) {};
\\};\draw[] (n0) to[out=0, in=180] (n1);
\end{tikzpicture}\end{gathered}\end{equation}

and

\begin{equation}\begin{gathered}\begin{tikzpicture}\matrix[column sep=3mm]{\node at (-0.1,0.0) [anchor=east] {\ensuremath{y}};
\draw[] (-0.1,0.0) to[out=0, in=180] (0.0,0.0);
\path (0.0,0.0) node[coordinate] (n0) {};
&
\node at (0.0,0.0) [rectangle, draw, fill=white] (n1) {\ensuremath{f}};
\path (n1.west) ++(-0.1,0.0) node[coordinate] (n2) {};
\draw[] (n2) to[out=0, in=180] (0.0,0.0);
\path (n1.east) ++(0.1,0.0) node[coordinate] (n3) {};
\draw[] (0.0,0.0) to[out=0, in=180] (n3);
\node at (0.0,0.0) [rectangle, draw, fill=white]  {\ensuremath{f}};
&
\node at (0.1,0.0) [anchor=west] {\ensuremath{x}};
\draw[] (0.0,0.0) to[out=0, in=180] (0.1,0.0);
\path (0.0,0.0) node[coordinate] (n4) {};
\\};\draw[postaction={decorate}, decoration={markings, mark=at position 0.5 with {\arrow[line width=0.2mm]{angle 90 reversed}}}] (n0) to[out=0, in=180] (n2);
\draw[postaction={decorate}, decoration={markings, mark=at position 0.5 with {\arrow[line width=0.2mm]{angle 90 reversed}}}] (n3) to[out=0, in=180] (n4);
\end{tikzpicture}\end{gathered}\enskip :=\enskip\begin{gathered}\begin{tikzpicture}\matrix[column sep=3mm]{\node at (-0.1,0.0) [anchor=east] {\ensuremath{L_y}};
\draw[] (-0.1,0.0) to[out=0, in=180] (0.0,0.0);
\path (0.0,0.0) node[coordinate] (n0) {};
&
\node at (0.0,0.0) [rectangle, draw, fill=white] (n1) {\ensuremath{L_f}};
\path (n1.west) ++(-0.1,0.0) node[coordinate] (n2) {};
\draw[] (n2) to[out=0, in=180] (0.0,0.0);
\path (n1.east) ++(0.1,0.0) node[coordinate] (n3) {};
\draw[] (0.0,0.0) to[out=0, in=180] (n3);
\node at (0.0,0.0) [rectangle, draw, fill=white]  {\ensuremath{L_f}};
&
\node at (0.1,0.0) [anchor=west] {\ensuremath{L_x}};
\draw[] (0.0,0.0) to[out=0, in=180] (0.1,0.0);
\path (0.0,0.0) node[coordinate] (n4) {};
\\};\draw[] (n0) to[out=0, in=180] (n2);
\draw[] (n3) to[out=0, in=180] (n4);
\end{tikzpicture}\end{gathered}\end{equation}

\begin{note*}

This choice of notation could create confusion as to whether a box on an
oriented wire is meant to be seen as in the image of \(R\)/\(L\) or not.
However we will see later that \(R\) and \(L\) are fully faithful, and
thus this confusion fades away: all boxes on an oriented wire are arrows
in \(C\).

\end{note*}

From the propositions above, we see that this notation respects
composition in \(C\) as well as the \(M\)-actegory structures (note the
inversion that happens when tensoring on a right-oriented wire):

\[\begin{array}{rcl}\begin{gathered}\begin{tikzpicture}\matrix[column sep=3mm]{\node at (-0.1,0.0) [anchor=east] {\ensuremath{m}};
\draw[] (-0.1,0.0) to[out=0, in=180] (0.0,0.0);
\path (0.0,0.0) node[coordinate] (n0) {};
\node at (-0.1,0.6) [anchor=east] {\ensuremath{x}};
\draw[] (-0.1,0.6) to[out=0, in=180] (0.0,0.6);
\path (0.0,0.6) node[coordinate] (n1) {};
&
&
\node at (0.1,0.0) [anchor=west] {\ensuremath{m}};
\draw[] (0.0,0.0) to[out=0, in=180] (0.1,0.0);
\path (0.0,0.0) node[coordinate] (n2) {};
\node at (0.1,0.6) [anchor=west] {\ensuremath{x}};
\draw[] (0.0,0.6) to[out=0, in=180] (0.1,0.6);
\path (0.0,0.6) node[coordinate] (n3) {};
\\};\draw[postaction={decorate}, decoration={markings, mark=at position 0.5 with {\arrow[line width=0.2mm]{angle 90}}}] (n0) to[out=0, in=180] (n2);
\draw[postaction={decorate}, decoration={markings, mark=at position 0.5 with {\arrow[line width=0.2mm]{angle 90}}}] (n1) to[out=0, in=180] (n3);
\end{tikzpicture}\end{gathered}&=&\begin{gathered}\begin{tikzpicture}\matrix[column sep=3mm]{\node at (-0.1,0.0) [anchor=east] {\ensuremath{m \act x}};
\draw[] (-0.1,0.0) to[out=0, in=180] (0.0,0.0);
\path (0.0,0.0) node[coordinate] (n0) {};
&
&
\node at (0.1,0.0) [anchor=west] {\ensuremath{m \act x}};
\draw[] (0.0,0.0) to[out=0, in=180] (0.1,0.0);
\path (0.0,0.0) node[coordinate] (n1) {};
\\};\draw[postaction={decorate}, decoration={markings, mark=at position 0.5 with {\arrow[line width=0.2mm]{angle 90}}}] (n0) to[out=0, in=180] (n1);
\end{tikzpicture}\end{gathered}\\\\\begin{gathered}\begin{tikzpicture}\matrix[column sep=3mm]{\node at (-0.1,0.0) [anchor=east] {\ensuremath{x}};
\draw[] (-0.1,0.0) to[out=0, in=180] (0.0,0.0);
\path (0.0,0.0) node[coordinate] (n0) {};
\node at (-0.1,0.6) [anchor=east] {\ensuremath{m}};
\draw[] (-0.1,0.6) to[out=0, in=180] (0.0,0.6);
\path (0.0,0.6) node[coordinate] (n1) {};
&
&
\node at (0.1,0.0) [anchor=west] {\ensuremath{x}};
\draw[] (0.0,0.0) to[out=0, in=180] (0.1,0.0);
\path (0.0,0.0) node[coordinate] (n2) {};
\node at (0.1,0.6) [anchor=west] {\ensuremath{m}};
\draw[] (0.0,0.6) to[out=0, in=180] (0.1,0.6);
\path (0.0,0.6) node[coordinate] (n3) {};
\\};\draw[postaction={decorate}, decoration={markings, mark=at position 0.5 with {\arrow[line width=0.2mm]{angle 90 reversed}}}] (n0) to[out=0, in=180] (n2);
\draw[postaction={decorate}, decoration={markings, mark=at position 0.5 with {\arrow[line width=0.2mm]{angle 90 reversed}}}] (n1) to[out=0, in=180] (n3);
\end{tikzpicture}\end{gathered}&=&\begin{gathered}\begin{tikzpicture}\matrix[column sep=3mm]{\node at (-0.1,0.0) [anchor=east] {\ensuremath{m \act x}};
\draw[] (-0.1,0.0) to[out=0, in=180] (0.0,0.0);
\path (0.0,0.0) node[coordinate] (n0) {};
&
&
\node at (0.1,0.0) [anchor=west] {\ensuremath{m \act x}};
\draw[] (0.0,0.0) to[out=0, in=180] (0.1,0.0);
\path (0.0,0.0) node[coordinate] (n1) {};
\\};\draw[postaction={decorate}, decoration={markings, mark=at position 0.5 with {\arrow[line width=0.2mm]{angle 90 reversed}}}] (n0) to[out=0, in=180] (n1);
\end{tikzpicture}\end{gathered}\\\\\begin{gathered}\begin{tikzpicture}\matrix[column sep=3mm]{\node at (-0.1,0.0) [anchor=east] {\ensuremath{I}};
\draw[] (-0.1,0.0) to[out=0, in=180] (0.0,0.0);
\path (0.0,0.0) node[coordinate] (n0) {};
&
&
\node at (0.1,0.0) [anchor=west] {\ensuremath{I}};
\draw[] (0.0,0.0) to[out=0, in=180] (0.1,0.0);
\path (0.0,0.0) node[coordinate] (n1) {};
\\};\draw[postaction={decorate}, decoration={markings, mark=at position 0.5 with {\arrow[line width=0.2mm]{angle 90}}}] (n0) to[out=0, in=180] (n1);
\end{tikzpicture}\end{gathered}&=&\begin{gathered}\textit{\small empty diagram}\end{gathered}\\\\\begin{gathered}\begin{tikzpicture}\matrix[column sep=3mm]{\node at (-0.1,0.0) [anchor=east] {\ensuremath{I}};
\draw[] (-0.1,0.0) to[out=0, in=180] (0.0,0.0);
\path (0.0,0.0) node[coordinate] (n0) {};
&
&
\node at (0.1,0.0) [anchor=west] {\ensuremath{I}};
\draw[] (0.0,0.0) to[out=0, in=180] (0.1,0.0);
\path (0.0,0.0) node[coordinate] (n1) {};
\\};\draw[postaction={decorate}, decoration={markings, mark=at position 0.5 with {\arrow[line width=0.2mm]{angle 90 reversed}}}] (n0) to[out=0, in=180] (n1);
\end{tikzpicture}\end{gathered}&=&\begin{gathered}\textit{\small empty diagram}\end{gathered}\end{array}\]

\begin{note*}

Note that because of the types of the 1-cells (that are not shown in the
diagrams), not all tensorings of the oriented wires are allowed. For
example, it could be tempting to think that
\(R_x \otimes R_y \cong R_{y \otimes x}\) for \(x, y: C\), but not only
is \(C\) not monoidal in general, the tensoring doesn't even type-check
since both \(R_x\) and \(R_y\) are objects of \(\Tamb_{C, M}\).

\end{note*}

\begin{note*}

When \(C\) is chosen to be \(M\), both \(R\) and \(L\) provide a
monoidal embedding of \(M\) into \(\Tamb_{M,M}\); we will see later that
it is also fully faithful. This means that the string diagrams in \(M\)
have two full and faithful embeddings into the string diagrams of
\(\Tamb\), using the oriented wires.

\end{note*}

\hypertarget{bending-wires}{%
\subsection{Bending Wires}\label{bending-wires}}

\begin{proposition}

\label{prop_bending_wires} For a given \(x: C\), the modules \(R_x\) and
\(L_x\) are adjoint. Moreover, the structure maps of the adjunction are
dinatural in \(x\).

\end{proposition}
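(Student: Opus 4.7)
The plan is to exhibit $L_x \dashv R_x$ explicitly by writing down the unit and counit and verifying the triangle identities, before checking compatibility with strengths and dinaturality in $x$.

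First I would unfold the two candidates for unit and counit using the coend formula for profunctor composition. For the counit, compute
\[
(R_x \otimes L_x)(a, b) \;=\; \int^{m: M} C(a, m \act x) \times C(m \act x, b),
\]
and define $\epsilon_x$ at $(a,b)$ to send $(\alpha, \beta) \mapsto \alpha \fatsemi \beta$; well-definedness on the coend is just the associativity/functoriality clause that makes this dinatural in $m$. For the unit, compute
\[
(L_x \otimes R_x)(m, n) \;=\; \int^{c: C} C(m \act x, c) \times C(c, n \act x) \;\cong\; C(m \act x, n \act x)
\]
by the co-Yoneda lemma, and define $\eta_x: M(m, n) \to C(m \act x, n \act x)$ to be the action of the functor $(-) \act x$ on morphisms. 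Both $\eta_x$ and $\epsilon_x$ are then evidently (di)natural in the profunctor variables.

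Next I would check the two triangle identities. After unfolding the coends and using co-Yoneda to eliminate the intermediate variables, each triangle collapses to the identity law for composition in $C$: one triangle says $(\mathrm{id} \act x) \fatsemi \mathrm{id} = \mathrm{id}$ in $C(m \act x, m \act x)$, the other says that factoring $\alpha: a \to m \act x$ as $\alpha = \mathrm{id}_a \fatsemi \alpha$ and then reassembling through $\eta$ reproduces $\alpha$. Both are routine once one tracks the coend representatives carefully.

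Then I would check that $\eta_x$ and $\epsilon_x$ are morphisms in $\Tamb$, i.e.\ that they preserve the strengths. The strengths on $R_x \otimes L_x$ and $L_x \otimes R_x$ are the ones induced componentwise from the actions of $M$ on $C$ and on itself; since $\eta_x$ is built from the functoriality of $- \act x$ and $\epsilon_x$ is built from composition in $C$, both commute with the action of an arbitrary $m \in M$ up to the structural isomorphisms of the $M$-actegory, and hence preserve strength. Finally, dinaturality in $x$ amounts to commutativity, for $f: x \to y$ in $C$, of the square relating $\eta_x$, $\eta_y$, $L_f \otimes R_y$ and $L_x \otimes R_f$ (and dually for $\epsilon$); unfolding the definitions, this just says that $(-) \act f$ is natural and composition in $C$ commutes with postcomposition by $f$, which is immediate.

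The main obstacle is purely bookkeeping: the triangle identities must be checked at the level of coend representatives, and one has to be careful that the co-Yoneda rewriting used to present $L_x \otimes R_x$ as $C(m \act x, n \act x)$ is coherent with the coend representation of $R_x \otimes L_x$ used for $\epsilon_x$. Once those presentations are pinned down, all verifications reduce to the unit and associativity laws of composition in $C$.
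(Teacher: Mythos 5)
Your proposal is correct and matches the paper's proof: you use the same unit (the action of $\mathord{-} \act x$, obtained via co-Yoneda from $L_x \otimes R_x$) and the same counit (composition in $C$ on $R_x \otimes L_x$), and the substantive step in both arguments is checking that these two maps preserve strength so that the adjunction lives in $\Tamb$ rather than merely in $\Prof$. The only difference is presentational: the paper treats the adjunction in $\Prof$ as the standard one induced by the functor $\mathord{-} \act x$ and inherits the triangle identities from there once the unit and counit are known to be $\Tamb$-cells, whereas you re-verify the triangle identities directly on coend representatives, which is exactly the bookkeeping you flag as the main obstacle and which the paper's route avoids.
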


\begin{proof}

\(R_x = C(\mathord{-}, \mathord{=} \act x)\) and
\(L_x = C(\mathord{-} \act x, \mathord{=})\) are clearly adjoint in
\(\Prof\). The adjunction lifts to \(\Tamb\); see appendix
\ref{proof_bending_wires}. Dinaturality in \(x\) is straightforward from
the definition of the unit and counit.

\end{proof}

This means that there exist two 2-cells, that we will draw as:

\begin{equation*}\begin{gathered}\begin{tikzpicture}\matrix[column sep=3mm]{&
\node at (0.0,0.3) [] (n0) {\ensuremath{}};
\path (n0.east) ++(0.1,-0.3) node[coordinate] (n1) {};
\draw[] (0.0,0.3) to[out=down, in=180] (n1);
\path (n0.east) ++(0.1,0.3) node[coordinate] (n2) {};
\draw[] (0.0,0.3) to[out=up, in=180] (n2);
\node at (0.0,0.3) []  {\ensuremath{}};
&
\node at (0.1,0.0) [anchor=west] {\ensuremath{x}};
\draw[] (0.0,0.0) to[out=0, in=180] (0.1,0.0);
\path (0.0,0.0) node[coordinate] (n3) {};
\node at (0.1,0.6) [anchor=west] {\ensuremath{x}};
\draw[] (0.0,0.6) to[out=0, in=180] (0.1,0.6);
\path (0.0,0.6) node[coordinate] (n4) {};
\\};\draw[postaction={decorate}, decoration={markings, mark=at position 0.5 with {\arrow[line width=0.2mm]{angle 90}}}] (n1) to[out=0, in=180] (n3);
\draw[postaction={decorate}, decoration={markings, mark=at position 0.5 with {\arrow[line width=0.2mm]{angle 90 reversed}}}] (n2) to[out=0, in=180] (n4);
\end{tikzpicture}\end{gathered}\enskip \text{and}\enskip\begin{gathered}\begin{tikzpicture}\matrix[column sep=3mm]{\node at (-0.1,0.0) [anchor=east] {\ensuremath{x}};
\draw[] (-0.1,0.0) to[out=0, in=180] (0.0,0.0);
\path (0.0,0.0) node[coordinate] (n0) {};
\node at (-0.1,0.6) [anchor=east] {\ensuremath{x}};
\draw[] (-0.1,0.6) to[out=0, in=180] (0.0,0.6);
\path (0.0,0.6) node[coordinate] (n1) {};
&
\node at (0.0,0.3) [] (n2) {\ensuremath{}};
\path (n2.west) ++(-0.1,-0.3) node[coordinate] (n3) {};
\draw[] (n3) to[out=0, in=down] (0.0,0.3);
\path (n2.west) ++(-0.1,0.3) node[coordinate] (n4) {};
\draw[] (n4) to[out=0, in=up] (0.0,0.3);
\node at (0.0,0.3) []  {\ensuremath{}};
&
\\};\draw[postaction={decorate}, decoration={markings, mark=at position 0.5 with {\arrow[line width=0.2mm]{angle 90 reversed}}}] (n0) to[out=0, in=180] (n3);
\draw[postaction={decorate}, decoration={markings, mark=at position 0.5 with {\arrow[line width=0.2mm]{angle 90}}}] (n1) to[out=0, in=180] (n4);
\end{tikzpicture}\end{gathered}\end{equation*}

that satisfy the so-called ``snake equations'':

\begin{equation}\begin{gathered}\begin{tikzpicture}\matrix[column sep=3mm]{\node at (-0.1,1.2) [anchor=east] {\ensuremath{x}};
\draw[] (-0.1,1.2) to[out=0, in=180] (0.0,1.2);
\path (0.0,1.2) node[coordinate] (n0) {};
&
\node at (0.0,0.3) [] (n1) {\ensuremath{}};
\path (n1.east) ++(0.1,-0.3) node[coordinate] (n2) {};
\draw[] (0.0,0.3) to[out=down, in=180] (n2);
\path (n1.east) ++(0.1,0.3) node[coordinate] (n3) {};
\draw[] (0.0,0.3) to[out=up, in=180] (n3);
\node at (0.0,0.3) []  {\ensuremath{}};
&
\node at (0.0,0.9) [] (n4) {\ensuremath{}};
\path (n4.west) ++(-0.1,-0.3) node[coordinate] (n5) {};
\draw[] (n5) to[out=0, in=down] (0.0,0.9);
\path (n4.west) ++(-0.1,0.3) node[coordinate] (n6) {};
\draw[] (n6) to[out=0, in=up] (0.0,0.9);
\node at (0.0,0.9) []  {\ensuremath{}};
&
\node at (0.1,0.0) [anchor=west] {\ensuremath{x}};
\draw[] (0.0,0.0) to[out=0, in=180] (0.1,0.0);
\path (0.0,0.0) node[coordinate] (n7) {};
\\};\draw[postaction={decorate}, decoration={markings, mark=at position 0.5 with {\arrow[line width=0.2mm]{angle 90 reversed}}}] (n3) to[out=0, in=180] (n5);
\draw[postaction={decorate}, decoration={markings, mark=at position 0.5 with {\arrow[line width=0.2mm]{angle 90}}}] (n0) to[out=0, in=180] (n6);
\draw[postaction={decorate}, decoration={markings, mark=at position 0.5 with {\arrow[line width=0.2mm]{angle 90}}}] (n2) to[out=0, in=180] (n7);
\end{tikzpicture}\end{gathered}\enskip =\enskip\begin{gathered}\begin{tikzpicture}\matrix[column sep=3mm]{\node at (-0.1,0.0) [anchor=east] {\ensuremath{x}};
\draw[] (-0.1,0.0) to[out=0, in=180] (0.0,0.0);
\path (0.0,0.0) node[coordinate] (n0) {};
&
&
\node at (0.1,0.0) [anchor=west] {\ensuremath{x}};
\draw[] (0.0,0.0) to[out=0, in=180] (0.1,0.0);
\path (0.0,0.0) node[coordinate] (n1) {};
\\};\draw[postaction={decorate}, decoration={markings, mark=at position 0.5 with {\arrow[line width=0.2mm]{angle 90}}}] (n0) to[out=0, in=180] (n1);
\end{tikzpicture}\end{gathered}\end{equation}

and

\begin{equation}\begin{gathered}\begin{tikzpicture}\matrix[column sep=3mm]{\node at (-0.1,0.0) [anchor=east] {\ensuremath{x}};
\draw[] (-0.1,0.0) to[out=0, in=180] (0.0,0.0);
\path (0.0,0.0) node[coordinate] (n0) {};
&
\node at (0.0,0.9) [] (n1) {\ensuremath{}};
\path (n1.east) ++(0.1,-0.3) node[coordinate] (n2) {};
\draw[] (0.0,0.9) to[out=down, in=180] (n2);
\path (n1.east) ++(0.1,0.3) node[coordinate] (n3) {};
\draw[] (0.0,0.9) to[out=up, in=180] (n3);
\node at (0.0,0.9) []  {\ensuremath{}};
&
\node at (0.0,0.3) [] (n4) {\ensuremath{}};
\path (n4.west) ++(-0.1,-0.3) node[coordinate] (n5) {};
\draw[] (n5) to[out=0, in=down] (0.0,0.3);
\path (n4.west) ++(-0.1,0.3) node[coordinate] (n6) {};
\draw[] (n6) to[out=0, in=up] (0.0,0.3);
\node at (0.0,0.3) []  {\ensuremath{}};
&
\node at (0.1,1.2) [anchor=west] {\ensuremath{x}};
\draw[] (0.0,1.2) to[out=0, in=180] (0.1,1.2);
\path (0.0,1.2) node[coordinate] (n7) {};
\\};\draw[postaction={decorate}, decoration={markings, mark=at position 0.5 with {\arrow[line width=0.2mm]{angle 90 reversed}}}] (n0) to[out=0, in=180] (n5);
\draw[postaction={decorate}, decoration={markings, mark=at position 0.5 with {\arrow[line width=0.2mm]{angle 90}}}] (n2) to[out=0, in=180] (n6);
\draw[postaction={decorate}, decoration={markings, mark=at position 0.5 with {\arrow[line width=0.2mm]{angle 90 reversed}}}] (n3) to[out=0, in=180] (n7);
\end{tikzpicture}\end{gathered}\enskip =\enskip\begin{gathered}\begin{tikzpicture}\matrix[column sep=3mm]{\node at (-0.1,0.0) [anchor=east] {\ensuremath{x}};
\draw[] (-0.1,0.0) to[out=0, in=180] (0.0,0.0);
\path (0.0,0.0) node[coordinate] (n0) {};
&
&
\node at (0.1,0.0) [anchor=west] {\ensuremath{x}};
\draw[] (0.0,0.0) to[out=0, in=180] (0.1,0.0);
\path (0.0,0.0) node[coordinate] (n1) {};
\\};\draw[postaction={decorate}, decoration={markings, mark=at position 0.5 with {\arrow[line width=0.2mm]{angle 90 reversed}}}] (n0) to[out=0, in=180] (n1);
\end{tikzpicture}\end{gathered}\end{equation}

Those maps are additionally dinatural in \(x\), which means we can also
slide \(C\)-arrows around them:

\begin{equation}\begin{gathered}\begin{tikzpicture}\matrix[column sep=3mm]{&
\node at (0.0,0.3) [] (n0) {\ensuremath{}};
\path (n0.east) ++(0.1,-0.3) node[coordinate] (n1) {};
\draw[] (0.0,0.3) to[out=down, in=180] (n1);
\path (n0.east) ++(0.1,0.3) node[coordinate] (n2) {};
\draw[] (0.0,0.3) to[out=up, in=180] (n2);
\node at (0.0,0.3) []  {\ensuremath{}};
&
\node at (0.0,0.6) [rectangle, draw, fill=white] (n3) {\ensuremath{f}};
\path (n3.west) ++(-0.1,0.0) node[coordinate] (n4) {};
\draw[] (n4) to[out=0, in=180] (0.0,0.6);
\path (n3.east) ++(0.1,0.0) node[coordinate] (n5) {};
\draw[] (0.0,0.6) to[out=0, in=180] (n5);
\node at (0.0,0.6) [rectangle, draw, fill=white]  {\ensuremath{f}};
&
\node at (0.1,0.0) [anchor=west] {\ensuremath{y}};
\draw[] (0.0,0.0) to[out=0, in=180] (0.1,0.0);
\path (0.0,0.0) node[coordinate] (n6) {};
\node at (0.1,0.6) [anchor=west] {\ensuremath{x}};
\draw[] (0.0,0.6) to[out=0, in=180] (0.1,0.6);
\path (0.0,0.6) node[coordinate] (n7) {};
\\};\draw[postaction={decorate}, decoration={markings, mark=at position 0.5 with {\arrow[line width=0.2mm]{angle 90 reversed}}}] (n2) to[out=0, in=180] (n4);
\draw[postaction={decorate}, decoration={markings, mark=at position 0.5 with {\arrow[line width=0.2mm]{angle 90}}}] (n1) to[out=0, in=180] (n6);
\draw[postaction={decorate}, decoration={markings, mark=at position 0.5 with {\arrow[line width=0.2mm]{angle 90 reversed}}}] (n5) to[out=0, in=180] (n7);
\end{tikzpicture}\end{gathered}\enskip =\enskip\begin{gathered}\begin{tikzpicture}\matrix[column sep=3mm]{&
\node at (0.0,0.3) [] (n0) {\ensuremath{}};
\path (n0.east) ++(0.1,-0.3) node[coordinate] (n1) {};
\draw[] (0.0,0.3) to[out=down, in=180] (n1);
\path (n0.east) ++(0.1,0.3) node[coordinate] (n2) {};
\draw[] (0.0,0.3) to[out=up, in=180] (n2);
\node at (0.0,0.3) []  {\ensuremath{}};
&
\node at (0.0,0.0) [rectangle, draw, fill=white] (n3) {\ensuremath{f}};
\path (n3.west) ++(-0.1,0.0) node[coordinate] (n4) {};
\draw[] (n4) to[out=0, in=180] (0.0,0.0);
\path (n3.east) ++(0.1,0.0) node[coordinate] (n5) {};
\draw[] (0.0,0.0) to[out=0, in=180] (n5);
\node at (0.0,0.0) [rectangle, draw, fill=white]  {\ensuremath{f}};
&
\node at (0.1,0.0) [anchor=west] {\ensuremath{y}};
\draw[] (0.0,0.0) to[out=0, in=180] (0.1,0.0);
\path (0.0,0.0) node[coordinate] (n6) {};
\node at (0.1,0.6) [anchor=west] {\ensuremath{x}};
\draw[] (0.0,0.6) to[out=0, in=180] (0.1,0.6);
\path (0.0,0.6) node[coordinate] (n7) {};
\\};\draw[postaction={decorate}, decoration={markings, mark=at position 0.5 with {\arrow[line width=0.2mm]{angle 90}}}] (n1) to[out=0, in=180] (n4);
\draw[postaction={decorate}, decoration={markings, mark=at position 0.5 with {\arrow[line width=0.2mm]{angle 90}}}] (n5) to[out=0, in=180] (n6);
\draw[postaction={decorate}, decoration={markings, mark=at position 0.5 with {\arrow[line width=0.2mm]{angle 90 reversed}}}] (n2) to[out=0, in=180] (n7);
\end{tikzpicture}\end{gathered}\end{equation}

and

\begin{equation}\begin{gathered}\begin{tikzpicture}\matrix[column sep=3mm]{\node at (-0.1,0.0) [anchor=east] {\ensuremath{y}};
\draw[] (-0.1,0.0) to[out=0, in=180] (0.0,0.0);
\path (0.0,0.0) node[coordinate] (n0) {};
\node at (-0.1,0.6) [anchor=east] {\ensuremath{x}};
\draw[] (-0.1,0.6) to[out=0, in=180] (0.0,0.6);
\path (0.0,0.6) node[coordinate] (n1) {};
&
\node at (0.0,0.6) [rectangle, draw, fill=white] (n2) {\ensuremath{f}};
\path (n2.west) ++(-0.1,0.0) node[coordinate] (n3) {};
\draw[] (n3) to[out=0, in=180] (0.0,0.6);
\path (n2.east) ++(0.1,0.0) node[coordinate] (n4) {};
\draw[] (0.0,0.6) to[out=0, in=180] (n4);
\node at (0.0,0.6) [rectangle, draw, fill=white]  {\ensuremath{f}};
&
\node at (0.0,0.3) [] (n5) {\ensuremath{}};
\path (n5.west) ++(-0.1,-0.3) node[coordinate] (n6) {};
\draw[] (n6) to[out=0, in=down] (0.0,0.3);
\path (n5.west) ++(-0.1,0.3) node[coordinate] (n7) {};
\draw[] (n7) to[out=0, in=up] (0.0,0.3);
\node at (0.0,0.3) []  {\ensuremath{}};
&
\\};\draw[postaction={decorate}, decoration={markings, mark=at position 0.5 with {\arrow[line width=0.2mm]{angle 90}}}] (n1) to[out=0, in=180] (n3);
\draw[postaction={decorate}, decoration={markings, mark=at position 0.5 with {\arrow[line width=0.2mm]{angle 90 reversed}}}] (n0) to[out=0, in=180] (n6);
\draw[postaction={decorate}, decoration={markings, mark=at position 0.5 with {\arrow[line width=0.2mm]{angle 90}}}] (n4) to[out=0, in=180] (n7);
\end{tikzpicture}\end{gathered}\enskip =\enskip\begin{gathered}\begin{tikzpicture}\matrix[column sep=3mm]{\node at (-0.1,0.0) [anchor=east] {\ensuremath{y}};
\draw[] (-0.1,0.0) to[out=0, in=180] (0.0,0.0);
\path (0.0,0.0) node[coordinate] (n0) {};
\node at (-0.1,0.6) [anchor=east] {\ensuremath{x}};
\draw[] (-0.1,0.6) to[out=0, in=180] (0.0,0.6);
\path (0.0,0.6) node[coordinate] (n1) {};
&
\node at (0.0,0.0) [rectangle, draw, fill=white] (n2) {\ensuremath{f}};
\path (n2.west) ++(-0.1,0.0) node[coordinate] (n3) {};
\draw[] (n3) to[out=0, in=180] (0.0,0.0);
\path (n2.east) ++(0.1,0.0) node[coordinate] (n4) {};
\draw[] (0.0,0.0) to[out=0, in=180] (n4);
\node at (0.0,0.0) [rectangle, draw, fill=white]  {\ensuremath{f}};
&
\node at (0.0,0.3) [] (n5) {\ensuremath{}};
\path (n5.west) ++(-0.1,-0.3) node[coordinate] (n6) {};
\draw[] (n6) to[out=0, in=down] (0.0,0.3);
\path (n5.west) ++(-0.1,0.3) node[coordinate] (n7) {};
\draw[] (n7) to[out=0, in=up] (0.0,0.3);
\node at (0.0,0.3) []  {\ensuremath{}};
&
\\};\draw[postaction={decorate}, decoration={markings, mark=at position 0.5 with {\arrow[line width=0.2mm]{angle 90 reversed}}}] (n0) to[out=0, in=180] (n3);
\draw[postaction={decorate}, decoration={markings, mark=at position 0.5 with {\arrow[line width=0.2mm]{angle 90 reversed}}}] (n4) to[out=0, in=180] (n6);
\draw[postaction={decorate}, decoration={markings, mark=at position 0.5 with {\arrow[line width=0.2mm]{angle 90}}}] (n1) to[out=0, in=180] (n7);
\end{tikzpicture}\end{gathered}\end{equation}

We have discovered an additional property of the diagrammatic language:
oriented arrows can be bent downwards. Note that bending upwards is not
in general possible.

\begin{note*}

In the case of set-based lenses (i.e.~\(C=D=M=Set\) with the cartesian
product), the second of those maps (the ``cap'') was featured in the
calculus of \cite{coherence_for_lenses_and_open_games}. The first
map (the ``cup'') however cannot be expressed in that calculus.

\end{note*}

\hypertarget{embedding-optics}{%
\section{Embedding Optics}\label{embedding-optics}}

\hypertarget{a-representation-theorem}{%
\subsection{A Representation Theorem}\label{a-representation-theorem}}

We will now use this calculus to express optics. Recall from
\autoref{psh_op_is_tamb} that presheaves on optics are equivalent to
Tambara modules. Consequently, the Yoneda embedding
\(Y: \Optic_{C, D} \rightarrow [\Optic_{C, D}^{op}, Set] \cong \Tamb_{C, D}\)
provides a fully faithful embedding of optics into \(\Tamb\). This is
the crucial property that enables our calculus.

\begin{lemma}

\(Y\icol{x}{u} = R_x \otimes L_u\)

\end{lemma}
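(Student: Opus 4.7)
The plan is to unfold both sides as profunctors $C^{op} \times D \to Set$ and observe they have the same formula on the nose. Under the equivalence of \autoref{psh_op_is_tamb}, the representable presheaf $Y\icol{x}{u} = \Optic_{C,D}(-, \icol{x}{u})$ corresponds to the Tambara module whose underlying profunctor sends $(a, b)$ to $\Optic_{C,D}(\icol{a}{b}, \icol{x}{u})$; expanding the definition gives
\[
Y\icol{x}{u}(a, b) = \int^{m: M} C(a, m \act_C x) \times D(m \act_D u, b).
\]

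On the other hand, by the definition of horizontal composition in $\Tamb$ (inherited from $\Prof$),
\[
(R_x \otimes L_u)(a, b) = \int^{m: M} R_x(a, m) \times L_u(m, b) = \int^{m: M} C(a, m \act_C x) \times D(m \act_D u, b),
\]
where $L_u$ here is formed with the $D$-actegory structure (the analogous construction to $L$ in $D$). So the underlying profunctors are literally equal.

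It remains to check that the two Tambara strengths agree. The strength on $Y\icol{x}{u}$ is induced by the optic composition $\opval{\alpha}{\beta}{m} \mapsto \opval{n \act \alpha}{n \act \beta}{n \otimes m}$, namely precomposition by the ``reshaping'' optic $\icol{n \act a}{n \act b} \to \icol{a}{b}$. The strength on $R_x \otimes L_u$ arises from the two component strengths of $R_x$ and $L_u$, each of which is given by whiskering with $n \act_?-$. Under the coend representation these two recipes produce the same element of the integrand, so the natural transformation is the identity.

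The only mildly delicate point is this last strength check; but because both strengths are defined by applying the actegory action to the $m$-indexed integrand (in the first factor for $R_x$, in the second for $L_u$, matching the optic reshaping $n \act \alpha$ and $n \act \beta$), the equality is immediate from the coend calculus. Hence $Y\icol{x}{u} \cong R_x \otimes L_u$ as Tambara modules.
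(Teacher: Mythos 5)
Your proposal is correct and takes essentially the same route as the paper, whose entire proof is ``by definition of $Y$, $R$ and $L$, modulo the equivalence of \autoref{psh_op_is_tamb}''; you have simply spelled out the coend computation and the strength check that the paper leaves implicit. The variances and the identification of the two strengths (via the reshaping optic $\opval{\mathrm{id}}{\mathrm{id}}{n}$ on one side and the componentwise actions on the other) are all handled correctly.
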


\begin{proof}

By definition of \(Y\), \(R\) and \(L\), modulo the equivalence of
\autoref{psh_op_is_tamb}.

\end{proof}

Thus \(Y\icol{x}{u}\) has the following nice diagrammatic notation:

\begin{equation}\begin{gathered}\begin{tikzpicture}\matrix[column sep=3mm]{\node at (-0.1,0.0) [anchor=east] {\ensuremath{Y\icol{x}{u}}};
\draw[] (-0.1,0.0) to[out=0, in=180] (0.0,0.0);
\path (0.0,0.0) node[coordinate] (n0) {};
&
&
\node at (0.1,0.0) [anchor=west] {\ensuremath{Y\icol{x}{u}}};
\draw[] (0.0,0.0) to[out=0, in=180] (0.1,0.0);
\path (0.0,0.0) node[coordinate] (n1) {};
\\};\draw[] (n0) to[out=0, in=180] (n1);
\end{tikzpicture}\end{gathered}\enskip =\enskip\begin{gathered}\begin{tikzpicture}\matrix[column sep=3mm]{\node at (-0.1,0.0) [anchor=east] {\ensuremath{u}};
\draw[] (-0.1,0.0) to[out=0, in=180] (0.0,0.0);
\path (0.0,0.0) node[coordinate] (n0) {};
\node at (-0.1,0.6) [anchor=east] {\ensuremath{x}};
\draw[] (-0.1,0.6) to[out=0, in=180] (0.0,0.6);
\path (0.0,0.6) node[coordinate] (n1) {};
&
&
\node at (0.1,0.0) [anchor=west] {\ensuremath{u}};
\draw[] (0.0,0.0) to[out=0, in=180] (0.1,0.0);
\path (0.0,0.0) node[coordinate] (n2) {};
\node at (0.1,0.6) [anchor=west] {\ensuremath{x}};
\draw[] (0.0,0.6) to[out=0, in=180] (0.1,0.6);
\path (0.0,0.6) node[coordinate] (n3) {};
\\};\draw[postaction={decorate}, decoration={markings, mark=at position 0.5 with {\arrow[line width=0.2mm]{angle 90 reversed}}}] (n0) to[out=0, in=180] (n2);
\draw[postaction={decorate}, decoration={markings, mark=at position 0.5 with {\arrow[line width=0.2mm]{angle 90}}}] (n1) to[out=0, in=180] (n3);
\end{tikzpicture}\end{gathered}\end{equation}

From this we deduce the main theorem of this paper:

\begin{theorem}[Representation theorem]

Optics \(l: \Optic_{C,D}(\icol{x}{u}, \icol{y}{v})\) are in bijection
with arrows in \(\Tamb_{C, D}\) of type:

\begin{tikzpicture}\matrix[column sep=3mm]{\node at (-0.1,0.0) [anchor=east] {\ensuremath{u}};
\draw[] (-0.1,0.0) to[out=0, in=180] (0.0,0.0);
\path (0.0,0.0) node[coordinate] (n0) {};
\node at (-0.1,0.6) [anchor=east] {\ensuremath{x}};
\draw[] (-0.1,0.6) to[out=0, in=180] (0.0,0.6);
\path (0.0,0.6) node[coordinate] (n1) {};
&
\node at (0.0,0.3) [rectangle, draw, fill=white] (n2) {\ensuremath{l}};
\path (n2.west) ++(-0.1,-0.3) node[coordinate] (n3) {};
\draw[] (n3) to[out=0, in=down] (0.0,0.3);
\path (n2.west) ++(-0.1,0.3) node[coordinate] (n4) {};
\draw[] (n4) to[out=0, in=up] (0.0,0.3);
\path (n2.east) ++(0.1,-0.3) node[coordinate] (n5) {};
\draw[] (0.0,0.3) to[out=down, in=180] (n5);
\path (n2.east) ++(0.1,0.3) node[coordinate] (n6) {};
\draw[] (0.0,0.3) to[out=up, in=180] (n6);
\node at (0.0,0.3) [rectangle, draw, fill=white]  {\ensuremath{l}};
&
\node at (0.1,0.0) [anchor=west] {\ensuremath{v}};
\draw[] (0.0,0.0) to[out=0, in=180] (0.1,0.0);
\path (0.0,0.0) node[coordinate] (n7) {};
\node at (0.1,0.6) [anchor=west] {\ensuremath{y}};
\draw[] (0.0,0.6) to[out=0, in=180] (0.1,0.6);
\path (0.0,0.6) node[coordinate] (n8) {};
\\};\draw[postaction={decorate}, decoration={markings, mark=at position 0.5 with {\arrow[line width=0.2mm]{angle 90 reversed}}}] (n0) to[out=0, in=180] (n3);
\draw[postaction={decorate}, decoration={markings, mark=at position 0.5 with {\arrow[line width=0.2mm]{angle 90}}}] (n1) to[out=0, in=180] (n4);
\draw[postaction={decorate}, decoration={markings, mark=at position 0.5 with {\arrow[line width=0.2mm]{angle 90 reversed}}}] (n5) to[out=0, in=180] (n7);
\draw[postaction={decorate}, decoration={markings, mark=at position 0.5 with {\arrow[line width=0.2mm]{angle 90}}}] (n6) to[out=0, in=180] (n8);
\end{tikzpicture}

and moreover this bijection is functorial, i.e.~composition of optics
becomes horizontal composition of diagrams and the identity optic is the
identity diagram.

\end{theorem}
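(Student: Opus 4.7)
The plan is to assemble three facts already proved in the excerpt and check that their composition yields exactly the diagrammatic arrows described.

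First, recall that \autoref{psh_op_is_tamb} gives $[\Optic_{C,D}^{op}, Set] \cong \Tamb_{C,D}$ as categories. The Yoneda embedding $Y : \Optic_{C,D} \to [\Optic_{C,D}^{op}, Set]$ is fully faithful, so composing with this equivalence yields a fully faithful functor $Y : \Optic_{C,D} \to \Tamb_{C,D}$. Hence we already have a natural bijection
\[
\Optic_{C,D}(\icol{x}{u}, \icol{y}{v}) \;\cong\; \Tamb_{C,D}(Y\icol{x}{u},\, Y\icol{y}{v})
\]
which is functorial in the sense that composition and identities are preserved on the nose.

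Second, by the preceding lemma, $Y\icol{x}{u} = R_x \otimes L_u$ and $Y\icol{y}{v} = R_y \otimes L_v$. Substituting into the above bijection, optics $l : \Optic_{C,D}(\icol{x}{u},\icol{y}{v})$ are exactly 2-cells $R_x \otimes L_u \Rightarrow R_y \otimes L_v$ in $\Tamb$. I would then invoke the diagrammatic conventions set up in the \emph{Oriented Wires} subsection: a forward-oriented wire labelled $x$ denotes the 1-cell $R_x$, a reverse-oriented wire labelled $u$ denotes $L_u$, and vertical stacking denotes $\otimes$. Under these conventions, a general box with left-boundary wires $x$ (forward, top) and $u$ (reverse, bottom) and right-boundary wires $y$ (forward, top) and $v$ (reverse, bottom) is \emph{by definition} the same data as a 2-cell $R_x \otimes L_u \Rightarrow R_y \otimes L_v$. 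This matches the diagram in the statement exactly, establishing the bijection.

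Finally, functoriality of the bijection is inherited from functoriality of $Y$: horizontal composition of two such boxes in $\Tamb$ corresponds, via the Yoneda bijection, to the composition of the associated natural transformations between representables, which in turn corresponds to composition in $\Optic_{C,D}$; the identity diagram (with no box) is the identity natural transformation on $R_x \otimes L_u$, which is $Y$ applied to $\mathrm{id}_{\icol{x}{u}}$. There is no real obstacle here, since every ingredient is already in place; the only point requiring some care is checking that the reading of the pictured diagram genuinely matches a 2-cell of type $R_x \otimes L_u \Rightarrow R_y \otimes L_v$ rather than some bent or dualised variant, which follows immediately once one unfolds the orientation and stacking conventions.
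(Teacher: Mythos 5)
Your proposal is correct and follows exactly the paper's argument: full-faithfulness and functoriality of the Yoneda embedding $\Optic_{C,D} \to [\Optic_{C,D}^{op}, Set] \cong \Tamb_{C,D}$, combined with the preceding lemma identifying $Y\icol{x}{u}$ with $R_x \otimes L_u$ and the oriented-wire conventions. You have simply spelled out the steps the paper leaves implicit.
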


\begin{proof}

By full-faithfulness and functoriality of the Yoneda embedding.

\end{proof}

The consequences of this property need stressing: any diagram of this
type represents an optic, \emph{even if it is made of subcomponents that
are not themselves optics}. A parallel can be drawn with complex
numbers: a complex number with no imaginary part represents a real
number, regardless of whether it was constructed (using complex
operations like rotation) from complex numbers that were not themselves
real numbers. In both cases, we can work in this more general space
(complex numbers/Tambara modules) to reason more flexibly about the
simpler objects (reals/optics).

For example, the following diagram is a valid optic, even though several
of its subcomponents are not optics.

\begin{tikzpicture}\matrix[column sep=3mm]{\node at (-0.1,0.0) [anchor=east] {\ensuremath{u}};
\draw[] (-0.1,0.0) to[out=0, in=180] (0.0,0.0);
\path (0.0,0.0) node[coordinate] (n0) {};
\node at (-0.1,1.8) [anchor=east] {\ensuremath{x}};
\draw[] (-0.1,1.8) to[out=0, in=180] (0.0,1.8);
\path (0.0,1.8) node[coordinate] (n1) {};
&
\node at (0.0,0.9) [] (n2) {\ensuremath{}};
\path (n2.east) ++(0.1,-0.3) node[coordinate] (n3) {};
\draw[] (0.0,0.9) to[out=down, in=180] (n3);
\path (n2.east) ++(0.1,0.3) node[coordinate] (n4) {};
\draw[] (0.0,0.9) to[out=up, in=180] (n4);
\node at (0.0,0.9) []  {\ensuremath{}};
&
\node at (0.0,0.3) [rectangle, draw, fill=white] (n5) {\ensuremath{k}};
\path (n5.west) ++(-0.1,-0.3) node[coordinate] (n6) {};
\draw[] (n6) to[out=0, in=down] (0.0,0.3);
\path (n5.west) ++(-0.1,0.3) node[coordinate] (n7) {};
\draw[] (n7) to[out=0, in=up] (0.0,0.3);
\path (n5.east) ++(0.1,-0.3) node[coordinate] (n8) {};
\draw[] (0.0,0.3) to[out=down, in=180] (n8);
\path (n5.east) ++(0.1,0.3) node[coordinate] (n9) {};
\draw[] (0.0,0.3) to[out=up, in=180] (n9);
\node at (0.0,0.3) [rectangle, draw, fill=white]  {\ensuremath{k}};
\node at (0.0,1.5) [rectangle, draw, fill=white] (n10) {\ensuremath{l}};
\path (n10.west) ++(-0.1,-0.3) node[coordinate] (n11) {};
\draw[] (n11) to[out=0, in=down] (0.0,1.5);
\path (n10.west) ++(-0.1,0.3) node[coordinate] (n12) {};
\draw[] (n12) to[out=0, in=up] (0.0,1.5);
\path (n10.east) ++(0.1,-0.3) node[coordinate] (n13) {};
\draw[] (0.0,1.5) to[out=down, in=180] (n13);
\path (n10.east) ++(0.1,0.3) node[coordinate] (n14) {};
\draw[] (0.0,1.5) to[out=up, in=180] (n14);
\node at (0.0,1.5) [rectangle, draw, fill=white]  {\ensuremath{l}};
&
\node at (0.0,1.5) [] (n15) {\ensuremath{}};
\path (n15.west) ++(-0.1,-0.3) node[coordinate] (n16) {};
\draw[] (n16) to[out=0, in=down] (0.0,1.5);
\path (n15.west) ++(-0.1,0.3) node[coordinate] (n17) {};
\draw[] (n17) to[out=0, in=up] (0.0,1.5);
\node at (0.0,1.5) []  {\ensuremath{}};
&
\node at (0.1,0.0) [anchor=west] {\ensuremath{v}};
\draw[] (0.0,0.0) to[out=0, in=180] (0.1,0.0);
\path (0.0,0.0) node[coordinate] (n18) {};
\node at (0.1,0.6) [anchor=west] {\ensuremath{y}};
\draw[] (0.0,0.6) to[out=0, in=180] (0.1,0.6);
\path (0.0,0.6) node[coordinate] (n19) {};
\\};\draw[postaction={decorate}, decoration={markings, mark=at position 0.5 with {\arrow[line width=0.2mm]{angle 90 reversed}}}] (n0) to[out=0, in=180] (n6);
\draw[postaction={decorate}, decoration={markings, mark=at position 0.5 with {\arrow[line width=0.2mm]{angle 90}}}] (n3) to[out=0, in=180] (n7);
\draw[postaction={decorate}, decoration={markings, mark=at position 0.5 with {\arrow[line width=0.2mm]{angle 90 reversed}}}] (n4) to[out=0, in=180] (n11);
\draw[postaction={decorate}, decoration={markings, mark=at position 0.5 with {\arrow[line width=0.2mm]{angle 90}}}] (n1) to[out=0, in=180] (n12);
\draw[postaction={decorate}, decoration={markings, mark=at position 0.5 with {\arrow[line width=0.2mm]{angle 90 reversed}}}] (n13) to[out=0, in=180] (n16);
\draw[postaction={decorate}, decoration={markings, mark=at position 0.5 with {\arrow[line width=0.2mm]{angle 90}}}] (n14) to[out=0, in=180] (n17);
\draw[postaction={decorate}, decoration={markings, mark=at position 0.5 with {\arrow[line width=0.2mm]{angle 90 reversed}}}] (n8) to[out=0, in=180] (n18);
\draw[postaction={decorate}, decoration={markings, mark=at position 0.5 with {\arrow[line width=0.2mm]{angle 90}}}] (n9) to[out=0, in=180] (n19);
\end{tikzpicture}

\hypertarget{simple-arrows}{%
\subsection{Simple Arrows}\label{simple-arrows}}

The simplest optic we can construct is made out of two simple arrows
(i.e.~arrows in the base \(M\)-actegories). This is sometimes called an
\emph{adapter}. Given \(f: C(x, y)\) and \(g: D(v, u)\), we can see from
its type that \(R_f \otimes L_g\) is an optic:

\begin{tikzpicture}\matrix[column sep=3mm]{\node at (-0.1,0.0) [anchor=east] {\ensuremath{u}};
\draw[] (-0.1,0.0) to[out=0, in=180] (0.0,0.0);
\path (0.0,0.0) node[coordinate] (n0) {};
\node at (-0.1,0.6) [anchor=east] {\ensuremath{x}};
\draw[] (-0.1,0.6) to[out=0, in=180] (0.0,0.6);
\path (0.0,0.6) node[coordinate] (n1) {};
&
\node at (0.0,0.0) [rectangle, draw, fill=white] (n2) {\ensuremath{\vphantom{f}g}};
\path (n2.west) ++(-0.1,0.0) node[coordinate] (n3) {};
\draw[] (n3) to[out=0, in=180] (0.0,0.0);
\path (n2.east) ++(0.1,0.0) node[coordinate] (n4) {};
\draw[] (0.0,0.0) to[out=0, in=180] (n4);
\node at (0.0,0.0) [rectangle, draw, fill=white]  {\ensuremath{\vphantom{f}g}};
\node at (0.0,0.6) [rectangle, draw, fill=white] (n5) {\ensuremath{f}};
\path (n5.west) ++(-0.1,0.0) node[coordinate] (n6) {};
\draw[] (n6) to[out=0, in=180] (0.0,0.6);
\path (n5.east) ++(0.1,0.0) node[coordinate] (n7) {};
\draw[] (0.0,0.6) to[out=0, in=180] (n7);
\node at (0.0,0.6) [rectangle, draw, fill=white]  {\ensuremath{f}};
&
\node at (0.1,0.0) [anchor=west] {\ensuremath{v}};
\draw[] (0.0,0.0) to[out=0, in=180] (0.1,0.0);
\path (0.0,0.0) node[coordinate] (n8) {};
\node at (0.1,0.6) [anchor=west] {\ensuremath{y}};
\draw[] (0.0,0.6) to[out=0, in=180] (0.1,0.6);
\path (0.0,0.6) node[coordinate] (n9) {};
\\};\draw[postaction={decorate}, decoration={markings, mark=at position 0.5 with {\arrow[line width=0.2mm]{angle 90 reversed}}}] (n0) to[out=0, in=180] (n3);
\draw[postaction={decorate}, decoration={markings, mark=at position 0.5 with {\arrow[line width=0.2mm]{angle 90}}}] (n1) to[out=0, in=180] (n6);
\draw[postaction={decorate}, decoration={markings, mark=at position 0.5 with {\arrow[line width=0.2mm]{angle 90 reversed}}}] (n4) to[out=0, in=180] (n8);
\draw[postaction={decorate}, decoration={markings, mark=at position 0.5 with {\arrow[line width=0.2mm]{angle 90}}}] (n7) to[out=0, in=180] (n9);
\end{tikzpicture}

\begin{lemma}

\label{diagram_embedding} The optic corresponding to this diagram is
\(\opval{f \fatsemi \lambda^{-1}_y}{\lambda_v \fatsemi g}{I}\).

\end{lemma}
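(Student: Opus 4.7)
The plan is to apply the (enriched) Yoneda lemma directly. Since $Y$ is fully faithful via the equivalence of \autoref{psh_op_is_tamb}, the optic corresponding to a 2-cell $\tau : Y\icol{x}{u} \to Y\icol{y}{v}$ is recovered as $\tau_{\icol{x}{u}}(\mathrm{id}_{\icol{x}{u}})$, viewed as an element of $Y\icol{y}{v}(\icol{x}{u}) = \Optic_{C,D}(\icol{x}{u}, \icol{y}{v})$. In our case $\tau = R_f \otimes L_g$, so the task reduces to computing the value of this natural transformation at the identity optic.

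The first step is to fix a concrete representative of $\mathrm{id}_{\icol{x}{u}}$ at residue $m = I$, namely $\opval{\lambda_x^{-1}}{\lambda_u}{I}$: the pair has the correct types $x \to I \act x$ and $I \act u \to u$, and is easily checked to represent the identity under optic composition using the coend equation from the note following the definition of $\Optic_{C,D}$. The second step is to unfold how $R_f \otimes L_g$ acts on a generic coend element: by the definition of profunctor composition and the componentwise action of $R_f$ (post-composition with $m \act f$) and $L_g$ (pre-composition with $m \act g$, noting that $L$ is contravariant), it sends $\opval{\alpha}{\beta}{m}$ to $\opval{\alpha \fatsemi (m \act f)}{(m \act g) \fatsemi \beta}{m}$.

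Applying this to $\opval{\lambda_x^{-1}}{\lambda_u}{I}$ yields $\opval{\lambda_x^{-1} \fatsemi (I \act f)}{(I \act g) \fatsemi \lambda_u}{I}$. The final step is to invoke naturality of $\lambda$ in each actegory: $\lambda_x^{-1} \fatsemi (I \act f) = f \fatsemi \lambda_y^{-1}$ in $C$ and $(I \act g) \fatsemi \lambda_u = \lambda_v \fatsemi g$ in $D$. Substituting these produces precisely the claimed optic $\opval{f \fatsemi \lambda_y^{-1}}{\lambda_v \fatsemi g}{I}$. The main bookkeeping concern is keeping track of the variance of $L$ and the directions of the unitors; once the identity is normalized at $m = I$, everything else reduces to a single application of naturality.
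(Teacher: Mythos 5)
Your proposal is correct and follows essentially the same route as the paper's own proof: apply the 2-cell $R_f \otimes L_g$ to the representative $\opval{\lambda_x^{-1}}{\lambda_u}{I}$ of the identity optic, use the componentwise action to get $\opval{\lambda_x^{-1} \fatsemi (I \act f)}{(I \act g) \fatsemi \lambda_u}{I}$, and conclude by naturality of the unitors. The only difference is that you name the naturality step explicitly where the paper leaves it implicit.
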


\begin{proof}

By a straightforward calculation; see appendix
\ref{proof_diagram_embedding}.

\end{proof}

The special case of a single simple arrow is particularly interesting:

\begin{theorem}

\label{arrows_ff} All morphisms of type

\begin{tikzpicture}\matrix[column sep=3mm]{\node at (-0.1,0.0) [anchor=east] {\ensuremath{R_x}};
\draw[] (-0.1,0.0) to[out=0, in=180] (0.0,0.0);
\path (0.0,0.0) node[coordinate] (n0) {};
&
\node at (0.0,0.0) [rectangle, draw, fill=white] (n1) {\ensuremath{l}};
\path (n1.west) ++(-0.1,0.0) node[coordinate] (n2) {};
\draw[] (n2) to[out=0, in=180] (0.0,0.0);
\path (n1.east) ++(0.1,0.0) node[coordinate] (n3) {};
\draw[] (0.0,0.0) to[out=0, in=180] (n3);
\node at (0.0,0.0) [rectangle, draw, fill=white]  {\ensuremath{l}};
&
\node at (0.1,0.0) [anchor=west] {\ensuremath{R_y}};
\draw[] (0.0,0.0) to[out=0, in=180] (0.1,0.0);
\path (0.0,0.0) node[coordinate] (n4) {};
\\};\draw[] (n0) to[out=0, in=180] (n2);
\draw[] (n3) to[out=0, in=180] (n4);
\end{tikzpicture}

are of the form

\begin{tikzpicture}\matrix[column sep=3mm]{\node at (-0.1,0.0) [anchor=east] {\ensuremath{x}};
\draw[] (-0.1,0.0) to[out=0, in=180] (0.0,0.0);
\path (0.0,0.0) node[coordinate] (n0) {};
&
\node at (0.0,0.0) [rectangle, draw, fill=white] (n1) {\ensuremath{f}};
\path (n1.west) ++(-0.1,0.0) node[coordinate] (n2) {};
\draw[] (n2) to[out=0, in=180] (0.0,0.0);
\path (n1.east) ++(0.1,0.0) node[coordinate] (n3) {};
\draw[] (0.0,0.0) to[out=0, in=180] (n3);
\node at (0.0,0.0) [rectangle, draw, fill=white]  {\ensuremath{f}};
&
\node at (0.1,0.0) [anchor=west] {\ensuremath{y}};
\draw[] (0.0,0.0) to[out=0, in=180] (0.1,0.0);
\path (0.0,0.0) node[coordinate] (n4) {};
\\};\draw[postaction={decorate}, decoration={markings, mark=at position 0.5 with {\arrow[line width=0.2mm]{angle 90}}}] (n0) to[out=0, in=180] (n2);
\draw[postaction={decorate}, decoration={markings, mark=at position 0.5 with {\arrow[line width=0.2mm]{angle 90}}}] (n3) to[out=0, in=180] (n4);
\end{tikzpicture}

for some unique \(f: C(x, y)\).

Similarly for \(L\) and wires going to the left.

\end{theorem}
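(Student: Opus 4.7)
The plan is to recognize $R_x$ as a representable Tambara module so that the statement reduces to the Yoneda lemma together with a direct computation in $\Optic$. First I would observe that $R_x$ lies in the image of the embedding $Y: \Optic_{C,M} \hookrightarrow \Tamb_{C,M}$ obtained from \autoref{psh_op_is_tamb}. Indeed, by the preceding lemma we have $Y\icol{x}{I} = R_x \otimes L_I$, and by \autoref{R_respects_actegory} the module $L_I \cong M(\mathord{-},\mathord{=})$ is the identity 1-cell in $\Tamb_{M,M}$. So $Y\icol{x}{I} \cong R_x$, and likewise $Y\icol{y}{I} \cong R_y$.

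Since $Y$ is fully faithful, morphisms $R_x \to R_y$ in $\Tamb_{C,M}$ are in natural bijection with morphisms $\icol{x}{I} \to \icol{y}{I}$ in $\Optic_{C,M}$. I would then compute this latter hom-set directly from the coend definition:
\[
\Optic_{C,M}(\icol{x}{I}, \icol{y}{I}) = \int^{m:M} C(x, m \act y) \times M(m \otimes I, I).
\]
Using $m \otimes I \cong m$ and then applying co-Yoneda with the factor $M(m, I)$ playing the role of the representable, this collapses to $C(x, I \act y) \cong C(x, y)$ via $\lambda_y$. This gives the desired bijection $\Tamb_{C,M}(R_x, R_y) \cong C(x, y)$ on hom-sets.

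The remaining step is bookkeeping: verify that the composite bijection sends $f: C(x,y)$ to the morphism $R_f$, which amounts to unwinding the Yoneda embedding and tracking $\lambda^{-1}$'s through the co-Yoneda isomorphism. The strength-preservation condition that would complicate a direct computation of $\Tamb$-morphisms never needs to be addressed in this route, since it is built into the equivalence $[\Optic_{C,M}^{op}, \Set] \cong \Tamb_{C,M}$; this is what makes the argument short. The symmetric statement for $L$ is entirely dual: take instead $L_u \cong Y\icol{I}{u} = R_I \otimes L_u$ (using $R_I \cong I_M$) and run the same Yoneda computation with the roles of the two factors swapped, yielding $\Tamb_{M,C}(L_y, L_x) \cong C(x,y)$.
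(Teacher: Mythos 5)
Your proposal is correct and follows essentially the same route as the paper: identify $R_x \cong R_x \otimes L_I = Y\icol{x}{I}$ using $L_I \cong M(\mathord{-},\mathord{=})$, invoke full faithfulness of the Yoneda embedding to reduce to $\Optic_{C,M}(\icol{x}{I},\icol{y}{I})$, and collapse that coend to $C(x,y)$ by co-Yoneda, checking that the inverse is $f \mapsto R_f$. The paper presents the first step diagrammatically (tensoring $l$ with the invisible $I$ wire) and relegates the coend computation to an appendix, but the content is identical, including your correct handling of the contravariance in the $L$ case.
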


\begin{proof}

Since \(L_I \cong M(-,=)\), we have (using a potentially confusing
notation):

\[\begin{array}{rcccl}\begin{gathered}\begin{tikzpicture}\matrix[column sep=3mm]{\node at (-0.1,0.0) [anchor=east] {\ensuremath{R_x}};
\draw[] (-0.1,0.0) to[out=0, in=180] (0.0,0.0);
\path (0.0,0.0) node[coordinate] (n0) {};
&
\node at (0.0,0.0) [rectangle, draw, fill=white] (n1) {\ensuremath{l}};
\path (n1.west) ++(-0.1,0.0) node[coordinate] (n2) {};
\draw[] (n2) to[out=0, in=180] (0.0,0.0);
\path (n1.east) ++(0.1,0.0) node[coordinate] (n3) {};
\draw[] (0.0,0.0) to[out=0, in=180] (n3);
\node at (0.0,0.0) [rectangle, draw, fill=white]  {\ensuremath{l}};
&
\node at (0.1,0.0) [anchor=west] {\ensuremath{R_y}};
\draw[] (0.0,0.0) to[out=0, in=180] (0.1,0.0);
\path (0.0,0.0) node[coordinate] (n4) {};
\\};\draw[] (n0) to[out=0, in=180] (n2);
\draw[] (n3) to[out=0, in=180] (n4);
\end{tikzpicture}\end{gathered}&=&\begin{gathered}\begin{tikzpicture}\matrix[column sep=3mm]{\node at (-0.1,0.0) [anchor=east] {\ensuremath{I}};
\draw[] (-0.1,0.0) to[out=0, in=180] (0.0,0.0);
\path (0.0,0.0) node[coordinate] (n0) {};
\node at (-0.1,0.6) [anchor=east] {\ensuremath{R_x}};
\draw[] (-0.1,0.6) to[out=0, in=180] (0.0,0.6);
\path (0.0,0.6) node[coordinate] (n1) {};
&
\node at (0.0,0.6) [rectangle, draw, fill=white] (n2) {\ensuremath{l}};
\path (n2.west) ++(-0.1,0.0) node[coordinate] (n3) {};
\draw[] (n3) to[out=0, in=180] (0.0,0.6);
\path (n2.east) ++(0.1,0.0) node[coordinate] (n4) {};
\draw[] (0.0,0.6) to[out=0, in=180] (n4);
\node at (0.0,0.6) [rectangle, draw, fill=white]  {\ensuremath{l}};
&
\node at (0.1,0.0) [anchor=west] {\ensuremath{I}};
\draw[] (0.0,0.0) to[out=0, in=180] (0.1,0.0);
\path (0.0,0.0) node[coordinate] (n5) {};
\node at (0.1,0.6) [anchor=west] {\ensuremath{R_y}};
\draw[] (0.0,0.6) to[out=0, in=180] (0.1,0.6);
\path (0.0,0.6) node[coordinate] (n6) {};
\\};\draw[] (n1) to[out=0, in=180] (n3);
\draw[postaction={decorate}, decoration={markings, mark=at position 0.5 with {\arrow[line width=0.2mm]{angle 90 reversed}}}] (n0) to[out=0, in=180] (n5);
\draw[] (n4) to[out=0, in=180] (n6);
\end{tikzpicture}\end{gathered}&=&\begin{gathered}\begin{tikzpicture}\matrix[column sep=3mm]{\node at (-0.1,0.0) [anchor=east] {\ensuremath{I}};
\draw[] (-0.1,0.0) to[out=0, in=180] (0.0,0.0);
\path (0.0,0.0) node[coordinate] (n0) {};
\node at (-0.1,0.6) [anchor=east] {\ensuremath{x}};
\draw[] (-0.1,0.6) to[out=0, in=180] (0.0,0.6);
\path (0.0,0.6) node[coordinate] (n1) {};
&
\node at (0.0,0.6) [rectangle, draw, fill=white] (n2) {\ensuremath{l}};
\path (n2.west) ++(-0.1,0.0) node[coordinate] (n3) {};
\draw[] (n3) to[out=0, in=180] (0.0,0.6);
\path (n2.east) ++(0.1,0.0) node[coordinate] (n4) {};
\draw[] (0.0,0.6) to[out=0, in=180] (n4);
\node at (0.0,0.6) [rectangle, draw, fill=white]  {\ensuremath{l}};
&
\node at (0.1,0.0) [anchor=west] {\ensuremath{I}};
\draw[] (0.0,0.0) to[out=0, in=180] (0.1,0.0);
\path (0.0,0.0) node[coordinate] (n5) {};
\node at (0.1,0.6) [anchor=west] {\ensuremath{y}};
\draw[] (0.0,0.6) to[out=0, in=180] (0.1,0.6);
\path (0.0,0.6) node[coordinate] (n6) {};
\\};\draw[postaction={decorate}, decoration={markings, mark=at position 0.5 with {\arrow[line width=0.2mm]{angle 90}}}] (n1) to[out=0, in=180] (n3);
\draw[postaction={decorate}, decoration={markings, mark=at position 0.5 with {\arrow[line width=0.2mm]{angle 90 reversed}}}] (n0) to[out=0, in=180] (n5);
\draw[postaction={decorate}, decoration={markings, mark=at position 0.5 with {\arrow[line width=0.2mm]{angle 90}}}] (n4) to[out=0, in=180] (n6);
\end{tikzpicture}\end{gathered}\end{array}\]

Thus by the representation theorem, \(l\) can be seen as an optic in
\(\Optic_{C,M}(\icol{x}{I}, \icol{y}{I})\). We then calculate (see
appendix \ref{proof_arrows_ff}) that
\(\Optic_{C,M}(\icol{x}{I}, \icol{y}{I}) \cong C(x, y)\), with the
reverse direction given by the action of \(R\). The proof for \(L\) is
identical.

\end{proof}

\begin{corollary}

\(R\) and \(L\) are fully faithful.

\end{corollary}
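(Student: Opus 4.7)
The plan is to observe that this corollary is essentially a direct restatement of Theorem \ref{arrows_ff}. Recall that a functor $F$ is fully faithful precisely when, for every pair of objects $x, y$ in its domain, the hom-set map $f \mapsto Ff$ is a bijection between $\Hom(x, y)$ and $\Hom(Fx, Fy)$.

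First I would apply this criterion to $R: C \arr \Tamb_{C, M}$: I need to show that the assignment $f \mapsto R_f$ is a bijection $C(x, y) \cong \Tamb_{C, M}(R_x, R_y)$. But Theorem \ref{arrows_ff} says exactly that every 2-cell $l: R_x \arr R_y$ in $\Tamb_{C, M}$ arises as $R_f$ for a \emph{unique} $f: C(x, y)$ --- uniqueness gives faithfulness, and existence gives fullness. So there is nothing further to check.

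Then I would run the identical argument for $L: C^{op} \arr \Tamb_{M, C}$, noting that Theorem \ref{arrows_ff} explicitly states that the analogous result holds for $L$ and left-going wires. Since the statement is symmetric under swapping $R$ for $L$ (and orientations), the same one-line deduction applies.

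There is no substantial obstacle here: all of the work has already been absorbed into Theorem \ref{arrows_ff}, whose proof used the Yoneda-style calculation $\Optic_{C, M}(\icol{x}{I}, \icol{y}{I}) \cong C(x, y)$. The corollary is just the functorial packaging of that bijection.
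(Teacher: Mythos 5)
Your proof is correct and matches the paper's (implicit) argument: the corollary is stated as an immediate consequence of Theorem \ref{arrows_ff}, with existence of the preimage giving fullness and uniqueness giving faithfulness, for both $R$ and $L$. Nothing further is needed.
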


\begin{note*}

As pointed out earlier, in the particular case where we choose \(C=D=M\)
(as in the case of lenses), then \(R\) and \(L\) both provide a
fully-faithful \emph{and monoidal} embedding of the arrows in \(M\) into
diagrams.

\end{note*}

\hypertarget{refining-the-representation-theorem}{%
\subsection{Refining the Representation
Theorem}\label{refining-the-representation-theorem}}

Together, simple arrows and the cap are enough to represent any optic as
a string diagram.

\begin{theorem}

\label{representation_theorem} Given \(\alpha: C(x, m \act y)\) and
\(\beta: D(m \act v, u)\), the optic \(l := \opval{\alpha}{\beta}{m}\)
can be represented as follows:

\begin{equation}\begin{gathered}\begin{tikzpicture}\matrix[column sep=3mm]{\node at (-0.1,0.0) [anchor=east] {\ensuremath{u}};
\draw[] (-0.1,0.0) to[out=0, in=180] (0.0,0.0);
\path (0.0,0.0) node[coordinate] (n0) {};
\node at (-0.1,0.6) [anchor=east] {\ensuremath{x}};
\draw[] (-0.1,0.6) to[out=0, in=180] (0.0,0.6);
\path (0.0,0.6) node[coordinate] (n1) {};
&
\node at (0.0,0.3) [rectangle, draw, fill=white] (n2) {\ensuremath{l}};
\path (n2.west) ++(-0.1,-0.3) node[coordinate] (n3) {};
\draw[] (n3) to[out=0, in=down] (0.0,0.3);
\path (n2.west) ++(-0.1,0.3) node[coordinate] (n4) {};
\draw[] (n4) to[out=0, in=up] (0.0,0.3);
\path (n2.east) ++(0.1,-0.3) node[coordinate] (n5) {};
\draw[] (0.0,0.3) to[out=down, in=180] (n5);
\path (n2.east) ++(0.1,0.3) node[coordinate] (n6) {};
\draw[] (0.0,0.3) to[out=up, in=180] (n6);
\node at (0.0,0.3) [rectangle, draw, fill=white]  {\ensuremath{l}};
&
\node at (0.1,0.0) [anchor=west] {\ensuremath{v}};
\draw[] (0.0,0.0) to[out=0, in=180] (0.1,0.0);
\path (0.0,0.0) node[coordinate] (n7) {};
\node at (0.1,0.6) [anchor=west] {\ensuremath{y}};
\draw[] (0.0,0.6) to[out=0, in=180] (0.1,0.6);
\path (0.0,0.6) node[coordinate] (n8) {};
\\};\draw[postaction={decorate}, decoration={markings, mark=at position 0.5 with {\arrow[line width=0.2mm]{angle 90 reversed}}}] (n0) to[out=0, in=180] (n3);
\draw[postaction={decorate}, decoration={markings, mark=at position 0.5 with {\arrow[line width=0.2mm]{angle 90}}}] (n1) to[out=0, in=180] (n4);
\draw[postaction={decorate}, decoration={markings, mark=at position 0.5 with {\arrow[line width=0.2mm]{angle 90 reversed}}}] (n5) to[out=0, in=180] (n7);
\draw[postaction={decorate}, decoration={markings, mark=at position 0.5 with {\arrow[line width=0.2mm]{angle 90}}}] (n6) to[out=0, in=180] (n8);
\end{tikzpicture}\end{gathered}\enskip =\enskip\begin{gathered}\begin{tikzpicture}\matrix[column sep=3mm]{\node at (-0.1,0.3) [anchor=east] {\ensuremath{u}};
\draw[] (-0.1,0.3) to[out=0, in=180] (0.0,0.3);
\path (0.0,0.3) node[coordinate] (n0) {};
\node at (-0.1,1.5) [anchor=east] {\ensuremath{x}};
\draw[] (-0.1,1.5) to[out=0, in=180] (0.0,1.5);
\path (0.0,1.5) node[coordinate] (n1) {};
&
\node at (0.0,0.3) [rectangle, draw, fill=white] (n2) {\ensuremath{\beta{}}};
\path (n2.west) ++(-0.1,0.0) node[coordinate] (n3) {};
\draw[] (n3) to[out=0, in=180] (0.0,0.3);
\path (n2.east) ++(0.1,-0.3) node[coordinate] (n4) {};
\draw[] (0.0,0.3) to[out=down, in=180] (n4);
\path (n2.east) ++(0.1,0.3) node[coordinate] (n5) {};
\draw[] (0.0,0.3) to[out=up, in=180] (n5);
\node at (0.0,0.3) [rectangle, draw, fill=white]  {\ensuremath{\beta{}}};
\node at (0.0,1.5) [rectangle, draw, fill=white] (n6) {\ensuremath{\vphantom{\beta} \alpha}};
\path (n6.west) ++(-0.1,0.0) node[coordinate] (n7) {};
\draw[] (n7) to[out=0, in=180] (0.0,1.5);
\path (n6.east) ++(0.1,-0.3) node[coordinate] (n8) {};
\draw[] (0.0,1.5) to[out=down, in=180] (n8);
\path (n6.east) ++(0.1,0.3) node[coordinate] (n9) {};
\draw[] (0.0,1.5) to[out=up, in=180] (n9);
\node at (0.0,1.5) [rectangle, draw, fill=white]  {\ensuremath{\vphantom{\beta} \alpha}};
&
\node at (0.0,0.9) [] (n10) {\ensuremath{}};
\path (n10.west) ++(-0.1,-0.3) node[coordinate] (n11) {};
\draw[] (n11) to[out=0, in=down] (0.0,0.9);
\path (n10.west) ++(-0.1,0.3) node[coordinate] (n12) {};
\draw[] (n12) to[out=0, in=up] (0.0,0.9);
\node at (0.0,0.9) []  {\ensuremath{}};
&
\node at (0.1,0.0) [anchor=west] {\ensuremath{v}};
\draw[] (0.0,0.0) to[out=0, in=180] (0.1,0.0);
\path (0.0,0.0) node[coordinate] (n13) {};
\node at (0.1,1.8) [anchor=west] {\ensuremath{y}};
\draw[] (0.0,1.8) to[out=0, in=180] (0.1,1.8);
\path (0.0,1.8) node[coordinate] (n14) {};
\\};\draw[postaction={decorate}, decoration={markings, mark=at position 0.5 with {\arrow[line width=0.2mm]{angle 90 reversed}}}] (n0) to[out=0, in=180] (n3);
\draw[postaction={decorate}, decoration={markings, mark=at position 0.5 with {\arrow[line width=0.2mm]{angle 90}}}] (n1) to[out=0, in=180] (n7);
\draw[postaction={decorate}, decoration={markings, mark=at position 0.5 with {\arrow[line width=0.2mm]{angle 90 reversed}}}] (n5) to[out=0, in=180] (n11);
\draw[postaction={decorate}, decoration={markings, mark=at position 0.5 with {\arrow[line width=0.2mm]{angle 90}}}] (n8) to[out=0, in=180] (n12);
\draw[postaction={decorate}, decoration={markings, mark=at position 0.5 with {\arrow[line width=0.2mm]{angle 90 reversed}}}] (n4) to[out=0, in=180] (n13);
\draw[postaction={decorate}, decoration={markings, mark=at position 0.5 with {\arrow[line width=0.2mm]{angle 90}}}] (n9) to[out=0, in=180] (n14);
\end{tikzpicture}\end{gathered}\end{equation}

\end{theorem}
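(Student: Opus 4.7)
The plan is to show directly that the right-hand side diagram, interpreted as a morphism $R_x \otimes L_u \to R_y \otimes L_v$ in $\Tamb_{C,D}$, equals $Y(l)$; since $Y$ is fully faithful (by \autoref{psh_op_is_tamb}), this will give the claimed equality with the left-hand side, which by definition represents $Y(l)$.

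First I would unfold the RHS diagram as a composite of established gadgets. The top box is $R_\alpha : R_x \to R_{m \act y}$ (by the notational convention for oriented wires), and the bottom box is $L_\beta : L_u \to L_{m \act v}$. Using the isomorphisms $R_{m \act y} \cong R_y \otimes R_m$ and $L_{m \act v} \cong L_m \otimes L_v$ from \autoref{R_respects_actegory}, these factor through $R_y \otimes R_m$ and $L_m \otimes L_v$ respectively. The cap connecting the two $m$-wires is, by \autoref{prop_bending_wires}, the counit $\epsilon_m : R_m \otimes L_m \to I$ of the adjunction $R_m \dashv L_m$. Assembled, the diagram denotes
\[
R_x \otimes L_u \xrightarrow{R_\alpha \otimes L_\beta} (R_y \otimes R_m) \otimes (L_m \otimes L_v) \xrightarrow{R_y \otimes \epsilon_m \otimes L_v} R_y \otimes L_v.
\]

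Second, I would evaluate this composite. By the Yoneda lemma it suffices to chase the identity element of $R_x \otimes L_u$ (the image of $\mathrm{id}_{\icol{x}{u}}$ under $Y$) through the composite, and check that the resulting element of $R_y \otimes L_v$ is the image of $l = \opval{\alpha}{\beta}{m}$. Applying $R_\alpha \otimes L_\beta$ produces the pair $(\alpha, \beta)$ living in $R_{m \act y} \otimes L_{m \act v}$; the isomorphisms of \autoref{R_respects_actegory} then split this into an element of $(R_y \otimes R_m) \otimes (L_m \otimes L_v)$ indexed by the intermediate object $m$; finally $\epsilon_m$ contracts the $R_m \otimes L_m$ pair to produce a single coend element $\opval{\alpha}{\beta}{m}$ of $R_y \otimes L_v$, which is exactly $Y(l)$ under the identification $Y\icol{y}{v} = R_y \otimes L_v$.

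The main obstacle is tracking the explicit form of the counit $\epsilon_m$ (whose construction is pushed to appendix \ref{proof_bending_wires}) through the composite and verifying that its action on pairs realizes precisely the coend quotient identifying $\opval{\alpha \fatsemi (f \act y)}{\beta}{m}$ with $\opval{\alpha}{(f \act v) \fatsemi \beta}{n}$. This identification is exactly the dinaturality of $\epsilon$ in $m$ (asserted in \autoref{prop_bending_wires}), so the coend structure of $\Optic_{C,D}$ matches the bookkeeping of caps in $\Tamb$ on the nose. Once this correspondence is set up, the equality of the two sides of the theorem is a direct calculation, and functoriality of the result then follows automatically from functoriality of $Y$ already noted in the preceding representation theorem.
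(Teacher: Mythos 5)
Your proposal is correct and follows essentially the same route as the paper: decompose the right-hand diagram as the adapter $R_\alpha \otimes L_\beta$ followed by the cap $\varepsilon_m$ on the $m$-wires, then identify the resulting optic by evaluating the composite on the identity of $\icol{x}{u}$ via the Yoneda embedding. The paper organizes this as two lemmas (the value of the simple-arrow pair and the value of the cap as optics) composed in $\Optic_{C,D}$, and its appendix carries out the unitor/associator bookkeeping that you correctly flag as the remaining routine obstacle.
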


\begin{proof}

By calculating the composition of the pair of simple arrows with the
cap; see appendix \ref{proof_representation_theorem}.

\end{proof}

\begin{note*}

Recall that the pairs \(\opval{\alpha}{\beta}{m}\) are defined modulo an
equivalence relation. How is this compatible with the diagrammatic
notation? The equivalence says that
\(\opval{\alpha \fatsemi (f \act y)}{\beta}{m} = \opval{\alpha}{(f \act v) \fatsemi \beta}{n}\);
diagrammatically, this becomes:

\begin{equation}\begin{gathered}\begin{tikzpicture}\matrix[column sep=3mm]{\node at (-0.1,0.3) [anchor=east] {\ensuremath{u}};
\draw[] (-0.1,0.3) to[out=0, in=180] (0.0,0.3);
\path (0.0,0.3) node[coordinate] (n0) {};
\node at (-0.1,1.5) [anchor=east] {\ensuremath{x}};
\draw[] (-0.1,1.5) to[out=0, in=180] (0.0,1.5);
\path (0.0,1.5) node[coordinate] (n1) {};
&
\node at (0.0,0.3) [rectangle, draw, fill=white] (n2) {\ensuremath{\beta{}}};
\path (n2.west) ++(-0.1,0.0) node[coordinate] (n3) {};
\draw[] (n3) to[out=0, in=180] (0.0,0.3);
\path (n2.east) ++(0.1,-0.3) node[coordinate] (n4) {};
\draw[] (0.0,0.3) to[out=down, in=180] (n4);
\path (n2.east) ++(0.1,0.3) node[coordinate] (n5) {};
\draw[] (0.0,0.3) to[out=up, in=180] (n5);
\node at (0.0,0.3) [rectangle, draw, fill=white]  {\ensuremath{\beta{}}};
\node at (0.0,1.5) [rectangle, draw, fill=white] (n6) {\ensuremath{\vphantom{\beta} \alpha}};
\path (n6.west) ++(-0.1,0.0) node[coordinate] (n7) {};
\draw[] (n7) to[out=0, in=180] (0.0,1.5);
\path (n6.east) ++(0.1,-0.3) node[coordinate] (n8) {};
\draw[] (0.0,1.5) to[out=down, in=180] (n8);
\path (n6.east) ++(0.1,0.3) node[coordinate] (n9) {};
\draw[] (0.0,1.5) to[out=up, in=180] (n9);
\node at (0.0,1.5) [rectangle, draw, fill=white]  {\ensuremath{\vphantom{\beta} \alpha}};
&
\node at (0.0,1.2) [rectangle, draw, fill=white] (n10) {\ensuremath{f}};
\path (n10.west) ++(-0.1,0.0) node[coordinate] (n11) {};
\draw[] (n11) to[out=0, in=180] (0.0,1.2);
\path (n10.east) ++(0.1,0.0) node[coordinate] (n12) {};
\draw[] (0.0,1.2) to[out=0, in=180] (n12);
\node at (0.0,1.2) [rectangle, draw, fill=white]  {\ensuremath{f}};
&
\node at (0.0,0.9) [] (n13) {\ensuremath{}};
\path (n13.west) ++(-0.1,-0.3) node[coordinate] (n14) {};
\draw[] (n14) to[out=0, in=down] (0.0,0.9);
\path (n13.west) ++(-0.1,0.3) node[coordinate] (n15) {};
\draw[] (n15) to[out=0, in=up] (0.0,0.9);
\node at (0.0,0.9) []  {\ensuremath{}};
&
\node at (0.1,0.0) [anchor=west] {\ensuremath{v}};
\draw[] (0.0,0.0) to[out=0, in=180] (0.1,0.0);
\path (0.0,0.0) node[coordinate] (n16) {};
\node at (0.1,1.8) [anchor=west] {\ensuremath{y}};
\draw[] (0.0,1.8) to[out=0, in=180] (0.1,1.8);
\path (0.0,1.8) node[coordinate] (n17) {};
\\};\draw[postaction={decorate}, decoration={markings, mark=at position 0.5 with {\arrow[line width=0.2mm]{angle 90 reversed}}}] (n0) to[out=0, in=180] (n3);
\draw[postaction={decorate}, decoration={markings, mark=at position 0.5 with {\arrow[line width=0.2mm]{angle 90}}}] (n1) to[out=0, in=180] (n7);
\draw[postaction={decorate}, decoration={markings, mark=at position 0.5 with {\arrow[line width=0.2mm]{angle 90}}}] (n8) to[out=0, in=180] (n11);
\draw[postaction={decorate}, decoration={markings, mark=at position 0.5 with {\arrow[line width=0.2mm]{angle 90 reversed}}}] (n5) to[out=0, in=180] (n14);
\draw[postaction={decorate}, decoration={markings, mark=at position 0.5 with {\arrow[line width=0.2mm]{angle 90}}}] (n12) to[out=0, in=180] (n15);
\draw[postaction={decorate}, decoration={markings, mark=at position 0.5 with {\arrow[line width=0.2mm]{angle 90 reversed}}}] (n4) to[out=0, in=180] (n16);
\draw[postaction={decorate}, decoration={markings, mark=at position 0.5 with {\arrow[line width=0.2mm]{angle 90}}}] (n9) to[out=0, in=180] (n17);
\end{tikzpicture}\end{gathered}\enskip =\enskip\begin{gathered}\begin{tikzpicture}\matrix[column sep=3mm]{\node at (-0.1,0.3) [anchor=east] {\ensuremath{u}};
\draw[] (-0.1,0.3) to[out=0, in=180] (0.0,0.3);
\path (0.0,0.3) node[coordinate] (n0) {};
\node at (-0.1,1.5) [anchor=east] {\ensuremath{x}};
\draw[] (-0.1,1.5) to[out=0, in=180] (0.0,1.5);
\path (0.0,1.5) node[coordinate] (n1) {};
&
\node at (0.0,0.3) [rectangle, draw, fill=white] (n2) {\ensuremath{\beta{}}};
\path (n2.west) ++(-0.1,0.0) node[coordinate] (n3) {};
\draw[] (n3) to[out=0, in=180] (0.0,0.3);
\path (n2.east) ++(0.1,-0.3) node[coordinate] (n4) {};
\draw[] (0.0,0.3) to[out=down, in=180] (n4);
\path (n2.east) ++(0.1,0.3) node[coordinate] (n5) {};
\draw[] (0.0,0.3) to[out=up, in=180] (n5);
\node at (0.0,0.3) [rectangle, draw, fill=white]  {\ensuremath{\beta{}}};
\node at (0.0,1.5) [rectangle, draw, fill=white] (n6) {\ensuremath{\vphantom{\beta} \alpha}};
\path (n6.west) ++(-0.1,0.0) node[coordinate] (n7) {};
\draw[] (n7) to[out=0, in=180] (0.0,1.5);
\path (n6.east) ++(0.1,-0.3) node[coordinate] (n8) {};
\draw[] (0.0,1.5) to[out=down, in=180] (n8);
\path (n6.east) ++(0.1,0.3) node[coordinate] (n9) {};
\draw[] (0.0,1.5) to[out=up, in=180] (n9);
\node at (0.0,1.5) [rectangle, draw, fill=white]  {\ensuremath{\vphantom{\beta} \alpha}};
&
\node at (0.0,0.6) [rectangle, draw, fill=white] (n10) {\ensuremath{f}};
\path (n10.west) ++(-0.1,0.0) node[coordinate] (n11) {};
\draw[] (n11) to[out=0, in=180] (0.0,0.6);
\path (n10.east) ++(0.1,0.0) node[coordinate] (n12) {};
\draw[] (0.0,0.6) to[out=0, in=180] (n12);
\node at (0.0,0.6) [rectangle, draw, fill=white]  {\ensuremath{f}};
&
\node at (0.0,0.9) [] (n13) {\ensuremath{}};
\path (n13.west) ++(-0.1,-0.3) node[coordinate] (n14) {};
\draw[] (n14) to[out=0, in=down] (0.0,0.9);
\path (n13.west) ++(-0.1,0.3) node[coordinate] (n15) {};
\draw[] (n15) to[out=0, in=up] (0.0,0.9);
\node at (0.0,0.9) []  {\ensuremath{}};
&
\node at (0.1,0.0) [anchor=west] {\ensuremath{v}};
\draw[] (0.0,0.0) to[out=0, in=180] (0.1,0.0);
\path (0.0,0.0) node[coordinate] (n16) {};
\node at (0.1,1.8) [anchor=west] {\ensuremath{y}};
\draw[] (0.0,1.8) to[out=0, in=180] (0.1,1.8);
\path (0.0,1.8) node[coordinate] (n17) {};
\\};\draw[postaction={decorate}, decoration={markings, mark=at position 0.5 with {\arrow[line width=0.2mm]{angle 90 reversed}}}] (n0) to[out=0, in=180] (n3);
\draw[postaction={decorate}, decoration={markings, mark=at position 0.5 with {\arrow[line width=0.2mm]{angle 90}}}] (n1) to[out=0, in=180] (n7);
\draw[postaction={decorate}, decoration={markings, mark=at position 0.5 with {\arrow[line width=0.2mm]{angle 90 reversed}}}] (n5) to[out=0, in=180] (n11);
\draw[postaction={decorate}, decoration={markings, mark=at position 0.5 with {\arrow[line width=0.2mm]{angle 90 reversed}}}] (n12) to[out=0, in=180] (n14);
\draw[postaction={decorate}, decoration={markings, mark=at position 0.5 with {\arrow[line width=0.2mm]{angle 90}}}] (n8) to[out=0, in=180] (n15);
\draw[postaction={decorate}, decoration={markings, mark=at position 0.5 with {\arrow[line width=0.2mm]{angle 90 reversed}}}] (n4) to[out=0, in=180] (n16);
\draw[postaction={decorate}, decoration={markings, mark=at position 0.5 with {\arrow[line width=0.2mm]{angle 90}}}] (n9) to[out=0, in=180] (n17);
\end{tikzpicture}\end{gathered}\end{equation}

Which we already know holds, by sliding \(f\) along the bent wire!

\end{note*}

\hypertarget{applications}{%
\section{Applications}\label{applications}}

We present two examples of applications of the calculus that illustrate
its expressivity.

\hypertarget{lawful-optics}{%
\subsection{Lawful Optics}\label{lawful-optics}}

One of the most striking consequences of this calculus (and the question
that led to its discovery) is the neatness with which it can express
optic laws.

As originally constructed by the Haskell community
\cite{kmett/lens}, optics were required to abide by certain
round-trip laws that ensure coherence of their operations. Those laws in
particular coincide with very-well-behavedness
\cite{fosterCombinatorsBidirectionalTree2005} in the case of lenses,
which we investigate in more detail in the next section. Riley
formalized those laws in a general form \cite[Section
3]{rileyCategoriesOptics2018}, but the result is rather hard to
manipulate. The string calculus enables an alternative (and equivalent)
description that is purely diagrammatic:

\begin{definition}

An optic \(l: \icol{x}{x} \arr \icol{y}{y}\) is said to be lawful when

\begin{equation}\begin{gathered}\begin{tikzpicture}\matrix[column sep=3mm]{\node at (-0.1,0.0) [anchor=east] {\ensuremath{x}};
\draw[] (-0.1,0.0) to[out=0, in=180] (0.0,0.0);
\path (0.0,0.0) node[coordinate] (n0) {};
\node at (-0.1,0.6) [anchor=east] {\ensuremath{x}};
\draw[] (-0.1,0.6) to[out=0, in=180] (0.0,0.6);
\path (0.0,0.6) node[coordinate] (n1) {};
&
\node at (0.0,0.3) [rectangle, draw, fill=white] (n2) {\ensuremath{l}};
\path (n2.west) ++(-0.1,-0.3) node[coordinate] (n3) {};
\draw[] (n3) to[out=0, in=down] (0.0,0.3);
\path (n2.west) ++(-0.1,0.3) node[coordinate] (n4) {};
\draw[] (n4) to[out=0, in=up] (0.0,0.3);
\path (n2.east) ++(0.1,-0.3) node[coordinate] (n5) {};
\draw[] (0.0,0.3) to[out=down, in=180] (n5);
\path (n2.east) ++(0.1,0.3) node[coordinate] (n6) {};
\draw[] (0.0,0.3) to[out=up, in=180] (n6);
\node at (0.0,0.3) [rectangle, draw, fill=white]  {\ensuremath{l}};
&
\node at (0.0,0.3) [] (n7) {\ensuremath{}};
\path (n7.west) ++(-0.1,-0.3) node[coordinate] (n8) {};
\draw[] (n8) to[out=0, in=down] (0.0,0.3);
\path (n7.west) ++(-0.1,0.3) node[coordinate] (n9) {};
\draw[] (n9) to[out=0, in=up] (0.0,0.3);
\node at (0.0,0.3) []  {\ensuremath{}};
&
\\};\draw[postaction={decorate}, decoration={markings, mark=at position 0.5 with {\arrow[line width=0.2mm]{angle 90 reversed}}}] (n0) to[out=0, in=180] (n3);
\draw[postaction={decorate}, decoration={markings, mark=at position 0.5 with {\arrow[line width=0.2mm]{angle 90}}}] (n1) to[out=0, in=180] (n4);
\draw[postaction={decorate}, decoration={markings, mark=at position 0.5 with {\arrow[line width=0.2mm]{angle 90 reversed}}}] (n5) to[out=0, in=180] (n8);
\draw[postaction={decorate}, decoration={markings, mark=at position 0.5 with {\arrow[line width=0.2mm]{angle 90}}}] (n6) to[out=0, in=180] (n9);
\end{tikzpicture}\end{gathered}\enskip =\enskip\begin{gathered}\begin{tikzpicture}\matrix[column sep=3mm]{\node at (-0.1,0.0) [anchor=east] {\ensuremath{x}};
\draw[] (-0.1,0.0) to[out=0, in=180] (0.0,0.0);
\path (0.0,0.0) node[coordinate] (n0) {};
\node at (-0.1,0.6) [anchor=east] {\ensuremath{x}};
\draw[] (-0.1,0.6) to[out=0, in=180] (0.0,0.6);
\path (0.0,0.6) node[coordinate] (n1) {};
&
\node at (0.0,0.3) [] (n2) {\ensuremath{}};
\path (n2.west) ++(-0.1,-0.3) node[coordinate] (n3) {};
\draw[] (n3) to[out=0, in=down] (0.0,0.3);
\path (n2.west) ++(-0.1,0.3) node[coordinate] (n4) {};
\draw[] (n4) to[out=0, in=up] (0.0,0.3);
\node at (0.0,0.3) []  {\ensuremath{}};
&
\\};\draw[postaction={decorate}, decoration={markings, mark=at position 0.5 with {\arrow[line width=0.2mm]{angle 90 reversed}}}] (n0) to[out=0, in=180] (n3);
\draw[postaction={decorate}, decoration={markings, mark=at position 0.5 with {\arrow[line width=0.2mm]{angle 90}}}] (n1) to[out=0, in=180] (n4);
\end{tikzpicture}\end{gathered}\end{equation}

and

\begin{equation}\begin{gathered}\begin{tikzpicture}\matrix[column sep=3mm]{\node at (-0.1,0.6) [anchor=east] {\ensuremath{x}};
\draw[] (-0.1,0.6) to[out=0, in=180] (0.0,0.6);
\path (0.0,0.6) node[coordinate] (n0) {};
\node at (-0.1,1.2) [anchor=east] {\ensuremath{x}};
\draw[] (-0.1,1.2) to[out=0, in=180] (0.0,1.2);
\path (0.0,1.2) node[coordinate] (n1) {};
&
\node at (0.0,0.9) [rectangle, draw, fill=white] (n2) {\ensuremath{l}};
\path (n2.west) ++(-0.1,-0.3) node[coordinate] (n3) {};
\draw[] (n3) to[out=0, in=down] (0.0,0.9);
\path (n2.west) ++(-0.1,0.3) node[coordinate] (n4) {};
\draw[] (n4) to[out=0, in=up] (0.0,0.9);
\path (n2.east) ++(0.1,-0.9) node[coordinate] (n5) {};
\draw[] (0.0,0.9) to[out=down, in=180] (n5);
\path (n2.east) ++(0.1,0.9) node[coordinate] (n6) {};
\draw[] (0.0,0.9) to[out=up, in=180] (n6);
\node at (0.0,0.9) [rectangle, draw, fill=white]  {\ensuremath{l}};
&
\node at (0.0,0.9) [] (n7) {\ensuremath{}};
\path (n7.east) ++(0.1,-0.3) node[coordinate] (n8) {};
\draw[] (0.0,0.9) to[out=down, in=180] (n8);
\path (n7.east) ++(0.1,0.3) node[coordinate] (n9) {};
\draw[] (0.0,0.9) to[out=up, in=180] (n9);
\node at (0.0,0.9) []  {\ensuremath{}};
&
\node at (0.1,0.0) [anchor=west] {\ensuremath{y}};
\draw[] (0.0,0.0) to[out=0, in=180] (0.1,0.0);
\path (0.0,0.0) node[coordinate] (n10) {};
\node at (0.1,0.6) [anchor=west] {\ensuremath{y}};
\draw[] (0.0,0.6) to[out=0, in=180] (0.1,0.6);
\path (0.0,0.6) node[coordinate] (n11) {};
\node at (0.1,1.2) [anchor=west] {\ensuremath{y}};
\draw[] (0.0,1.2) to[out=0, in=180] (0.1,1.2);
\path (0.0,1.2) node[coordinate] (n12) {};
\node at (0.1,1.8) [anchor=west] {\ensuremath{y}};
\draw[] (0.0,1.8) to[out=0, in=180] (0.1,1.8);
\path (0.0,1.8) node[coordinate] (n13) {};
\\};\draw[postaction={decorate}, decoration={markings, mark=at position 0.5 with {\arrow[line width=0.2mm]{angle 90 reversed}}}] (n0) to[out=0, in=180] (n3);
\draw[postaction={decorate}, decoration={markings, mark=at position 0.5 with {\arrow[line width=0.2mm]{angle 90}}}] (n1) to[out=0, in=180] (n4);
\draw[postaction={decorate}, decoration={markings, mark=at position 0.5 with {\arrow[line width=0.2mm]{angle 90 reversed}}}] (n5) to[out=0, in=180] (n10);
\draw[postaction={decorate}, decoration={markings, mark=at position 0.5 with {\arrow[line width=0.2mm]{angle 90}}}] (n8) to[out=0, in=180] (n11);
\draw[postaction={decorate}, decoration={markings, mark=at position 0.5 with {\arrow[line width=0.2mm]{angle 90 reversed}}}] (n9) to[out=0, in=180] (n12);
\draw[postaction={decorate}, decoration={markings, mark=at position 0.5 with {\arrow[line width=0.2mm]{angle 90}}}] (n6) to[out=0, in=180] (n13);
\end{tikzpicture}\end{gathered}\enskip =\enskip\begin{gathered}\begin{tikzpicture}\matrix[column sep=3mm]{\node at (-0.1,0.0) [anchor=east] {\ensuremath{x}};
\draw[] (-0.1,0.0) to[out=0, in=180] (0.0,0.0);
\path (0.0,0.0) node[coordinate] (n0) {};
\node at (-0.1,1.8) [anchor=east] {\ensuremath{x}};
\draw[] (-0.1,1.8) to[out=0, in=180] (0.0,1.8);
\path (0.0,1.8) node[coordinate] (n1) {};
&
\node at (0.0,0.9) [] (n2) {\ensuremath{}};
\path (n2.east) ++(0.1,-0.3) node[coordinate] (n3) {};
\draw[] (0.0,0.9) to[out=down, in=180] (n3);
\path (n2.east) ++(0.1,0.3) node[coordinate] (n4) {};
\draw[] (0.0,0.9) to[out=up, in=180] (n4);
\node at (0.0,0.9) []  {\ensuremath{}};
&
\node at (0.0,0.3) [rectangle, draw, fill=white] (n5) {\ensuremath{l}};
\path (n5.west) ++(-0.1,-0.3) node[coordinate] (n6) {};
\draw[] (n6) to[out=0, in=down] (0.0,0.3);
\path (n5.west) ++(-0.1,0.3) node[coordinate] (n7) {};
\draw[] (n7) to[out=0, in=up] (0.0,0.3);
\path (n5.east) ++(0.1,-0.3) node[coordinate] (n8) {};
\draw[] (0.0,0.3) to[out=down, in=180] (n8);
\path (n5.east) ++(0.1,0.3) node[coordinate] (n9) {};
\draw[] (0.0,0.3) to[out=up, in=180] (n9);
\node at (0.0,0.3) [rectangle, draw, fill=white]  {\ensuremath{l}};
\node at (0.0,1.5) [rectangle, draw, fill=white] (n10) {\ensuremath{l}};
\path (n10.west) ++(-0.1,-0.3) node[coordinate] (n11) {};
\draw[] (n11) to[out=0, in=down] (0.0,1.5);
\path (n10.west) ++(-0.1,0.3) node[coordinate] (n12) {};
\draw[] (n12) to[out=0, in=up] (0.0,1.5);
\path (n10.east) ++(0.1,-0.3) node[coordinate] (n13) {};
\draw[] (0.0,1.5) to[out=down, in=180] (n13);
\path (n10.east) ++(0.1,0.3) node[coordinate] (n14) {};
\draw[] (0.0,1.5) to[out=up, in=180] (n14);
\node at (0.0,1.5) [rectangle, draw, fill=white]  {\ensuremath{l}};
&
\node at (0.1,0.0) [anchor=west] {\ensuremath{y}};
\draw[] (0.0,0.0) to[out=0, in=180] (0.1,0.0);
\path (0.0,0.0) node[coordinate] (n15) {};
\node at (0.1,0.6) [anchor=west] {\ensuremath{y}};
\draw[] (0.0,0.6) to[out=0, in=180] (0.1,0.6);
\path (0.0,0.6) node[coordinate] (n16) {};
\node at (0.1,1.2) [anchor=west] {\ensuremath{y}};
\draw[] (0.0,1.2) to[out=0, in=180] (0.1,1.2);
\path (0.0,1.2) node[coordinate] (n17) {};
\node at (0.1,1.8) [anchor=west] {\ensuremath{y}};
\draw[] (0.0,1.8) to[out=0, in=180] (0.1,1.8);
\path (0.0,1.8) node[coordinate] (n18) {};
\\};\draw[postaction={decorate}, decoration={markings, mark=at position 0.5 with {\arrow[line width=0.2mm]{angle 90 reversed}}}] (n0) to[out=0, in=180] (n6);
\draw[postaction={decorate}, decoration={markings, mark=at position 0.5 with {\arrow[line width=0.2mm]{angle 90}}}] (n3) to[out=0, in=180] (n7);
\draw[postaction={decorate}, decoration={markings, mark=at position 0.5 with {\arrow[line width=0.2mm]{angle 90 reversed}}}] (n4) to[out=0, in=180] (n11);
\draw[postaction={decorate}, decoration={markings, mark=at position 0.5 with {\arrow[line width=0.2mm]{angle 90}}}] (n1) to[out=0, in=180] (n12);
\draw[postaction={decorate}, decoration={markings, mark=at position 0.5 with {\arrow[line width=0.2mm]{angle 90 reversed}}}] (n8) to[out=0, in=180] (n15);
\draw[postaction={decorate}, decoration={markings, mark=at position 0.5 with {\arrow[line width=0.2mm]{angle 90}}}] (n9) to[out=0, in=180] (n16);
\draw[postaction={decorate}, decoration={markings, mark=at position 0.5 with {\arrow[line width=0.2mm]{angle 90 reversed}}}] (n13) to[out=0, in=180] (n17);
\draw[postaction={decorate}, decoration={markings, mark=at position 0.5 with {\arrow[line width=0.2mm]{angle 90}}}] (n14) to[out=0, in=180] (n18);
\end{tikzpicture}\end{gathered}\end{equation}

\end{definition}

\begin{note*}

We can see that lawful optics are exactly the homomorphisms for the
``pair-of-pants'' comonoid made from pairs of oriented wires.
Interestingly, if we view this comonoid as a procomonad on \(C\), then
lawful optics are in bijection with its coalgebras on the carrier
\(R_x\). This is a significant generalization of the result by O'Connor
\cite{oconnorLensesAreExactly2010} that lawful lenses are the
coalgebras for the store comonad: here the ``pair-of-pants'' procomonad
precisely generalizes the store comonad.

\end{note*}

\begin{theorem}

\label{lawful_optic} This notion of lawfulness is equivalent to the one
defined by Riley in \cite[Section 3]{rileyCategoriesOptics2018}.

\end{theorem}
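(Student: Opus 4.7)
The plan is to prove the two directions by unpacking both formulations and matching them term-by-term. First, using the representation theorem (\autoref{representation_theorem}), write $l$ as $\opval{\alpha}{\beta}{m}$ for some $\alpha: C(x, m \act_C y)$, $\beta: D(m \act_D v, u)$. Each of the two diagrammatic laws can then be normalised by composing the representing diagram of $l$ with the caps and cups on the right, applying the snake equations, and reading off the resulting 2-cell as an arrow in $\Tamb$ (hence, by the representation theorem, as an optic). Concretely, I would compute the left-hand sides by sliding $\alpha$ and $\beta$ along the bent wires, bringing the cap from the right to pair up with $\beta$ and $\alpha$ at the correct end, and then invoking \autoref{diagram_embedding} (and its generalisation for non-identity residual) to identify the resulting simple-arrow expressions.

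The first diagrammatic law, after this normalisation, reduces to requiring that the composite $\alpha \fatsemi \beta$ (more precisely, $\alpha \fatsemi (m \act_C y) \xrightarrow{\sim} m \act_C y$ paired with $\beta$, suitably interpreted when $x = u$ and $y = v$) is the identity in the relevant hom-set; this is exactly Riley's first round-trip condition (the ``put-get''-style law in the general setting). The second law, which features $l$ composed with itself, normalises to the condition that applying the residual $\beta$ and then re-entering through $\alpha$ acts idempotently on the residual object $m$. Translating through the coend quotient, this matches Riley's second round-trip condition (the ``put-put''-style law). Each direction of the equivalence is obtained by the same calculation read in opposite directions: the representation theorem gives an isomorphism between optics and the 2-cells of the stated shape, and this isomorphism transports Riley's equations to ours and conversely.

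The main obstacle, and the reason to defer details to the appendix, is bookkeeping: Riley's conditions are stated in an enriched/indexed form involving explicit residual objects and coend quotients, whereas the diagrammatic laws hide the residual inside the caps, cups, and the sliding dinaturality. Care is required to check that each equation between diagrams, after normalisation, produces a condition that holds \emph{for all representatives} of the coend class (so it is independent of the choice of $\alpha$, $\beta$, and $m$), and conversely that Riley's pointwise conditions imply the universally-quantified diagrammatic equation. This amounts to a naturality-and-quotient argument, but once the normal form of the diagrams is established, the matching with Riley's conditions is mechanical. I would relegate the term-by-term translation to an appendix, presenting in the main text only the key normalised diagrams and the identification of their components with Riley's data.
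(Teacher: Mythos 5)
Your overall strategy---compute what the two diagrammatic equations denote and match the result against Riley's definition---is sound in spirit, and your reduction of the first law to $\alpha \fatsemi \beta = \mathrm{id}_x$ is correct. But there is a gap at the crux of the argument. The 2-cells appearing in the lawfulness equations are \emph{not} of the shape covered by the representation theorem: their codomains are the identity module $C(\mathord{-},\mathord{=})$ (first law) and the four-fold tensor $R_y \otimes L_y \otimes R_y \otimes L_y$ (second law), not $Y\icol{y}{v}$, so you cannot ``read off the resulting 2-cell as an optic'' by that theorem, nor invoke \autoref{diagram_embedding}, which only concerns cells of optic shape. What is actually needed is the general Yoneda lemma $\Tamb(Y\icol{x}{x}, P) \cong P(x,x)$ together with the observation---which is the entire content of the paper's proof---that the tensors of oriented wires evaluate to precisely Riley's auxiliary coends: $(R_y \otimes L_y)(x,x) \cong \int^m C(x, m \act y) \times C(m \act y, x)$ and $(R_y \otimes L_y \otimes R_y \otimes L_y)(x,x) \cong \int^{m,m'} C(x, m \act y) \times C(m \act y, m' \act y) \times C(m' \act y, x) = \Optic^2_M$. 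Once these identifications are made, the three composites occurring in the two laws (the capped $l$; $l$ followed by a cup inserted between its output legs; a cup followed by $l \otimes l$) are, up to naming, Riley's maps $outside$, $once$ and $twice$, and the equivalence is essentially definitional. Your proposal never identifies these codomains, which is where the theorem actually lives.

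Second, your characterisation of the two conditions is off in a way that would derail the write-up. Riley's general lawfulness is \emph{not} stated pointwise on representatives $(\alpha,\beta,m)$ and then quotient-checked; it is stated directly as equations between elements of the coends above (that is the purpose of the $outside$/$once$/$twice$ maps), so the ``naturality-and-quotient argument'' you anticipate is not where the difficulty lies---equality in a coend is equality of classes, not a condition to be verified ``for all representatives''. Concretely, the second law is the equation $\langle \alpha \mid 1_{m \act y} \mid \beta \rangle = \langle \alpha \mid \beta \fatsemi \alpha \mid \beta \rangle$ in the double coend, which is strictly weaker than your gloss that ``$\beta$ then $\alpha$ acts idempotently on the residual''; stating it as a representative-level identity $\beta \fatsemi \alpha = 1$ would be wrong. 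The representative-level PutGet/GetPut/PutPut laws you seem to have in mind are the \emph{lens} specialisation, which the paper proves separately as \cite[Proposition 3.0.3]{rileyCategoriesOptics2018}, not the general definition this theorem addresses.
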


\begin{proof}

See appendix \ref{proof_lawful_optic}.

\end{proof}

Thus this diagrammatic definition captures properly the useful and very
general notion of lawfulness for optics. Using this theorem, many
properties of lawfulness can be derived purely diagrammatically. As an
example, let us reprove \cite[Proposition
3.0.4]{rileyCategoriesOptics2018}:

\begin{proposition}[{\cite[Proposition 3.0.4]{rileyCategoriesOptics2018}}]

If \(\alpha\) and \(\beta\) are mutual inverses, then the optic
\(\opval{\alpha}{\beta}{m}\) is lawful.

\end{proposition}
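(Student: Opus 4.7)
The plan is to substitute the representation of $l = \opval{\alpha}{\beta}{m}$ provided by \autoref{representation_theorem} into each side of the two lawfulness equations, and then reduce using the snake equations of \autoref{prop_bending_wires} together with the hypotheses $\alpha \fatsemi \beta = \mathrm{id}_x$ and $\beta \fatsemi \alpha = \mathrm{id}_{m \act y}$ (so that the optic is of type $\icol{x}{x} \arr \icol{y}{y}$ with $y = v$, $x = u$).

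For the first lawfulness equation, after substituting the $(\alpha, \beta, m\text{-cap})$-decomposition, the left-hand side contains $\alpha$ on the top, $\beta$ on the bottom, the central cap connecting their $m$-wires, and the outer cap from the equation connecting the $y/v$-wires. Pushing $\alpha$ around the outer cap by the slide identity and straightening the resulting bend by the snake equation exposes the composite $\alpha \fatsemi \beta$ along an internal $(m \act y)$-wire; by hypothesis this is $\mathrm{id}_x$, and the leftover bent wires collapse via the snake equations into exactly the $x$-cap on the left, which is the right-hand side.

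For the second lawfulness equation, I would likewise substitute the decomposition in both sides, obtaining one copy of $(\alpha, \beta, m\text{-cap})$ on the left and two copies on the right, surrounded in each case by the cups and caps from the equation itself. Applying the snake and slide identities in the central region of the right-hand side exposes, between the two copies of the decomposition, a composite $\beta \fatsemi \alpha$ along an internal $(m \act y)$-wire; by hypothesis this equals $\mathrm{id}_{m \act y}$, which collapses one of the copies. After further snake simplifications both sides reduce to the same $(\alpha, \beta, m\text{-cap})$-configuration with the appropriate outer wiring.

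The main obstacle will be purely diagrammatic bookkeeping: the substituted diagrams are large and feature many bent wires of both orientations, so one must apply the snake and slide identities at exactly the right places to uncover the hidden $\alpha \fatsemi \beta$ and $\beta \fatsemi \alpha$ compositions that the hypothesis trivialises. Once these compositions are exposed, the remainder of the reduction is formal, and is entirely analogous to the snake manipulations already demonstrated in \autoref{prop_bending_wires}.
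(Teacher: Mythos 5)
Your plan matches the paper's proof essentially step for step: substitute the decomposition of $\opval{\alpha}{\beta}{m}$ from \autoref{representation_theorem}, slide boxes around the cups and caps until the composites $\alpha \fatsemi \beta$ (for the first law) and $\beta \fatsemi \alpha$ (for the second) appear explicitly, and cancel them using the mutual-inverse hypothesis. The paper's proof is exactly this diagrammatic bookkeeping carried out in full, so your proposal is correct and takes the same route.
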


\begin{proof}

\[\begin{array}{rcl}\begin{gathered}\begin{tikzpicture}\matrix[column sep=3mm]{\node at (-0.1,0.25) [anchor=east] {\ensuremath{x}};
\draw[] (-0.1,0.25) to[out=0, in=180] (0.0,0.25);
\path (0.0,0.25) node[coordinate] (n0) {};
\node at (-0.1,1.25) [anchor=east] {\ensuremath{x}};
\draw[] (-0.1,1.25) to[out=0, in=180] (0.0,1.25);
\path (0.0,1.25) node[coordinate] (n1) {};
&
\node at (0.0,0.25) [rectangle, draw, fill=white] (n2) {\ensuremath{\beta{}}};
\path (n2.west) ++(-0.1,0.0) node[coordinate] (n3) {};
\draw[] (n3) to[out=0, in=180] (0.0,0.25);
\path (n2.east) ++(0.1,-0.25) node[coordinate] (n4) {};
\draw[] (0.0,0.25) to[out=down, in=180] (n4);
\path (n2.east) ++(0.1,0.25) node[coordinate] (n5) {};
\draw[] (0.0,0.25) to[out=up, in=180] (n5);
\node at (0.0,0.25) [rectangle, draw, fill=white]  {\ensuremath{\beta{}}};
\node at (0.0,1.25) [rectangle, draw, fill=white] (n6) {\ensuremath{\alpha{}}};
\path (n6.west) ++(-0.1,0.0) node[coordinate] (n7) {};
\draw[] (n7) to[out=0, in=180] (0.0,1.25);
\path (n6.east) ++(0.1,-0.25) node[coordinate] (n8) {};
\draw[] (0.0,1.25) to[out=down, in=180] (n8);
\path (n6.east) ++(0.1,0.25) node[coordinate] (n9) {};
\draw[] (0.0,1.25) to[out=up, in=180] (n9);
\node at (0.0,1.25) [rectangle, draw, fill=white]  {\ensuremath{\alpha{}}};
&
\node at (0.0,0.75) [] (n10) {\ensuremath{}};
\path (n10.west) ++(-0.1,-0.25) node[coordinate] (n11) {};
\draw[] (n11) to[out=0, in=down] (0.0,0.75);
\path (n10.west) ++(-0.1,0.25) node[coordinate] (n12) {};
\draw[] (n12) to[out=0, in=up] (0.0,0.75);
\node at (0.0,0.75) []  {\ensuremath{}};
&
\node at (0.0,0.75) [] (n13) {\ensuremath{}};
\path (n13.west) ++(-0.1,-0.75) node[coordinate] (n14) {};
\draw[] (n14) to[out=0, in=down] (0.0,0.75);
\path (n13.west) ++(-0.1,0.75) node[coordinate] (n15) {};
\draw[] (n15) to[out=0, in=up] (0.0,0.75);
\node at (0.0,0.75) []  {\ensuremath{}};
&
\\};\draw[postaction={decorate}, decoration={markings, mark=at position 0.5 with {\arrow[line width=0.2mm]{angle 90 reversed}}}] (n0) to[out=0, in=180] (n3);
\draw[postaction={decorate}, decoration={markings, mark=at position 0.5 with {\arrow[line width=0.2mm]{angle 90}}}] (n1) to[out=0, in=180] (n7);
\draw[postaction={decorate}, decoration={markings, mark=at position 0.5 with {\arrow[line width=0.2mm]{angle 90 reversed}}}] (n5) to[out=0, in=180] (n11);
\draw[postaction={decorate}, decoration={markings, mark=at position 0.5 with {\arrow[line width=0.2mm]{angle 90}}}] (n8) to[out=0, in=180] (n12);
\draw[postaction={decorate}, decoration={markings, mark=at position 0.5 with {\arrow[line width=0.2mm]{angle 90 reversed}}}] (n4) to[out=0, in=180] (n14);
\draw[postaction={decorate}, decoration={markings, mark=at position 0.5 with {\arrow[line width=0.2mm]{angle 90}}}] (n9) to[out=0, in=180] (n15);
\end{tikzpicture}\end{gathered}&=&\begin{gathered}\begin{tikzpicture}\matrix[column sep=3mm]{\node at (-0.1,0.0) [anchor=east] {\ensuremath{x}};
\draw[] (-0.1,0.0) to[out=0, in=180] (0.0,0.0);
\path (0.0,0.0) node[coordinate] (n0) {};
\node at (-0.1,0.75) [anchor=east] {\ensuremath{x}};
\draw[] (-0.1,0.75) to[out=0, in=180] (0.0,0.75);
\path (0.0,0.75) node[coordinate] (n1) {};
&
\node at (0.0,0.75) [rectangle, draw, fill=white] (n2) {\ensuremath{\alpha{}}};
\path (n2.west) ++(-0.1,0.0) node[coordinate] (n3) {};
\draw[] (n3) to[out=0, in=180] (0.0,0.75);
\path (n2.east) ++(0.1,-0.25) node[coordinate] (n4) {};
\draw[] (0.0,0.75) to[out=down, in=180] (n4);
\path (n2.east) ++(0.1,0.25) node[coordinate] (n5) {};
\draw[] (0.0,0.75) to[out=up, in=180] (n5);
\node at (0.0,0.75) [rectangle, draw, fill=white]  {\ensuremath{\alpha{}}};
&
\node at (0.0,0.75) [rectangle, draw, fill=white] (n6) {\ensuremath{\beta{}}};
\path (n6.west) ++(-0.1,-0.25) node[coordinate] (n7) {};
\draw[] (n7) to[out=0, in=down] (0.0,0.75);
\path (n6.west) ++(-0.1,0.25) node[coordinate] (n8) {};
\draw[] (n8) to[out=0, in=up] (0.0,0.75);
\path (n6.east) ++(0.1,0.0) node[coordinate] (n9) {};
\draw[] (0.0,0.75) to[out=0, in=180] (n9);
\node at (0.0,0.75) [rectangle, draw, fill=white]  {\ensuremath{\beta{}}};
&
\node at (0.0,0.38) [] (n10) {\ensuremath{}};
\path (n10.west) ++(-0.1,-0.38) node[coordinate] (n11) {};
\draw[] (n11) to[out=0, in=down] (0.0,0.38);
\path (n10.west) ++(-0.1,0.38) node[coordinate] (n12) {};
\draw[] (n12) to[out=0, in=up] (0.0,0.38);
\node at (0.0,0.38) []  {\ensuremath{}};
&
\\};\draw[postaction={decorate}, decoration={markings, mark=at position 0.5 with {\arrow[line width=0.2mm]{angle 90}}}] (n1) to[out=0, in=180] (n3);
\draw[postaction={decorate}, decoration={markings, mark=at position 0.5 with {\arrow[line width=0.2mm]{angle 90}}}] (n4) to[out=0, in=180] (n7);
\draw[postaction={decorate}, decoration={markings, mark=at position 0.5 with {\arrow[line width=0.2mm]{angle 90}}}] (n5) to[out=0, in=180] (n8);
\draw[postaction={decorate}, decoration={markings, mark=at position 0.5 with {\arrow[line width=0.2mm]{angle 90 reversed}}}] (n0) to[out=0, in=180] (n11);
\draw[postaction={decorate}, decoration={markings, mark=at position 0.5 with {\arrow[line width=0.2mm]{angle 90}}}] (n9) to[out=0, in=180] (n12);
\end{tikzpicture}\end{gathered}\\&=&\begin{gathered}\begin{tikzpicture}\matrix[column sep=3mm]{\node at (-0.1,0.0) [anchor=east] {\ensuremath{x}};
\draw[] (-0.1,0.0) to[out=0, in=180] (0.0,0.0);
\path (0.0,0.0) node[coordinate] (n0) {};
\node at (-0.1,0.5) [anchor=east] {\ensuremath{x}};
\draw[] (-0.1,0.5) to[out=0, in=180] (0.0,0.5);
\path (0.0,0.5) node[coordinate] (n1) {};
&
\node at (0.0,0.25) [] (n2) {\ensuremath{}};
\path (n2.west) ++(-0.1,-0.25) node[coordinate] (n3) {};
\draw[] (n3) to[out=0, in=down] (0.0,0.25);
\path (n2.west) ++(-0.1,0.25) node[coordinate] (n4) {};
\draw[] (n4) to[out=0, in=up] (0.0,0.25);
\node at (0.0,0.25) []  {\ensuremath{}};
&
\\};\draw[postaction={decorate}, decoration={markings, mark=at position 0.5 with {\arrow[line width=0.2mm]{angle 90 reversed}}}] (n0) to[out=0, in=180] (n3);
\draw[postaction={decorate}, decoration={markings, mark=at position 0.5 with {\arrow[line width=0.2mm]{angle 90}}}] (n1) to[out=0, in=180] (n4);
\end{tikzpicture}\end{gathered}\end{array}\]

\[\begin{array}{rcl}\begin{gathered}\begin{tikzpicture}\matrix[column sep=3mm]{\node at (-0.1,0.25) [anchor=east] {\ensuremath{x}};
\draw[] (-0.1,0.25) to[out=0, in=180] (0.0,0.25);
\path (0.0,0.25) node[coordinate] (n0) {};
\node at (-0.1,3.25) [anchor=east] {\ensuremath{x}};
\draw[] (-0.1,3.25) to[out=0, in=180] (0.0,3.25);
\path (0.0,3.25) node[coordinate] (n1) {};
&
\node at (0.0,1.75) [] (n2) {\ensuremath{}};
\path (n2.east) ++(0.1,-0.5) node[coordinate] (n3) {};
\draw[] (0.0,1.75) to[out=down, in=180] (n3);
\path (n2.east) ++(0.1,0.5) node[coordinate] (n4) {};
\draw[] (0.0,1.75) to[out=up, in=180] (n4);
\node at (0.0,1.75) []  {\ensuremath{}};
&
\node at (0.0,0.25) [rectangle, draw, fill=white] (n5) {\ensuremath{\beta{}}};
\path (n5.west) ++(-0.1,0.0) node[coordinate] (n6) {};
\draw[] (n6) to[out=0, in=180] (0.0,0.25);
\path (n5.east) ++(0.1,-0.25) node[coordinate] (n7) {};
\draw[] (0.0,0.25) to[out=down, in=180] (n7);
\path (n5.east) ++(0.1,0.25) node[coordinate] (n8) {};
\draw[] (0.0,0.25) to[out=up, in=180] (n8);
\node at (0.0,0.25) [rectangle, draw, fill=white]  {\ensuremath{\beta{}}};
\node at (0.0,1.25) [rectangle, draw, fill=white] (n9) {\ensuremath{\alpha{}}};
\path (n9.west) ++(-0.1,0.0) node[coordinate] (n10) {};
\draw[] (n10) to[out=0, in=180] (0.0,1.25);
\path (n9.east) ++(0.1,-0.25) node[coordinate] (n11) {};
\draw[] (0.0,1.25) to[out=down, in=180] (n11);
\path (n9.east) ++(0.1,0.25) node[coordinate] (n12) {};
\draw[] (0.0,1.25) to[out=up, in=180] (n12);
\node at (0.0,1.25) [rectangle, draw, fill=white]  {\ensuremath{\alpha{}}};
\node at (0.0,2.25) [rectangle, draw, fill=white] (n13) {\ensuremath{\beta{}}};
\path (n13.west) ++(-0.1,0.0) node[coordinate] (n14) {};
\draw[] (n14) to[out=0, in=180] (0.0,2.25);
\path (n13.east) ++(0.1,-0.25) node[coordinate] (n15) {};
\draw[] (0.0,2.25) to[out=down, in=180] (n15);
\path (n13.east) ++(0.1,0.25) node[coordinate] (n16) {};
\draw[] (0.0,2.25) to[out=up, in=180] (n16);
\node at (0.0,2.25) [rectangle, draw, fill=white]  {\ensuremath{\beta{}}};
\node at (0.0,3.25) [rectangle, draw, fill=white] (n17) {\ensuremath{\alpha{}}};
\path (n17.west) ++(-0.1,0.0) node[coordinate] (n18) {};
\draw[] (n18) to[out=0, in=180] (0.0,3.25);
\path (n17.east) ++(0.1,-0.25) node[coordinate] (n19) {};
\draw[] (0.0,3.25) to[out=down, in=180] (n19);
\path (n17.east) ++(0.1,0.25) node[coordinate] (n20) {};
\draw[] (0.0,3.25) to[out=up, in=180] (n20);
\node at (0.0,3.25) [rectangle, draw, fill=white]  {\ensuremath{\alpha{}}};
&
\node at (0.0,0.75) [] (n21) {\ensuremath{}};
\path (n21.west) ++(-0.1,-0.25) node[coordinate] (n22) {};
\draw[] (n22) to[out=0, in=down] (0.0,0.75);
\path (n21.west) ++(-0.1,0.25) node[coordinate] (n23) {};
\draw[] (n23) to[out=0, in=up] (0.0,0.75);
\node at (0.0,0.75) []  {\ensuremath{}};
\node at (0.0,2.75) [] (n24) {\ensuremath{}};
\path (n24.west) ++(-0.1,-0.25) node[coordinate] (n25) {};
\draw[] (n25) to[out=0, in=down] (0.0,2.75);
\path (n24.west) ++(-0.1,0.25) node[coordinate] (n26) {};
\draw[] (n26) to[out=0, in=up] (0.0,2.75);
\node at (0.0,2.75) []  {\ensuremath{}};
&
\node at (0.1,0.0) [anchor=west] {\ensuremath{y}};
\draw[] (0.0,0.0) to[out=0, in=180] (0.1,0.0);
\path (0.0,0.0) node[coordinate] (n27) {};
\node at (0.1,1.5) [anchor=west] {\ensuremath{y}};
\draw[] (0.0,1.5) to[out=0, in=180] (0.1,1.5);
\path (0.0,1.5) node[coordinate] (n28) {};
\node at (0.1,2.0) [anchor=west] {\ensuremath{y}};
\draw[] (0.0,2.0) to[out=0, in=180] (0.1,2.0);
\path (0.0,2.0) node[coordinate] (n29) {};
\node at (0.1,3.5) [anchor=west] {\ensuremath{y}};
\draw[] (0.0,3.5) to[out=0, in=180] (0.1,3.5);
\path (0.0,3.5) node[coordinate] (n30) {};
\\};\draw[postaction={decorate}, decoration={markings, mark=at position 0.5 with {\arrow[line width=0.2mm]{angle 90 reversed}}}] (n0) to[out=0, in=180] (n6);
\draw[postaction={decorate}, decoration={markings, mark=at position 0.5 with {\arrow[line width=0.2mm]{angle 90}}}] (n3) to[out=0, in=180] (n10);
\draw[postaction={decorate}, decoration={markings, mark=at position 0.5 with {\arrow[line width=0.2mm]{angle 90 reversed}}}] (n4) to[out=0, in=180] (n14);
\draw[postaction={decorate}, decoration={markings, mark=at position 0.5 with {\arrow[line width=0.2mm]{angle 90}}}] (n1) to[out=0, in=180] (n18);
\draw[postaction={decorate}, decoration={markings, mark=at position 0.5 with {\arrow[line width=0.2mm]{angle 90 reversed}}}] (n8) to[out=0, in=180] (n22);
\draw[postaction={decorate}, decoration={markings, mark=at position 0.5 with {\arrow[line width=0.2mm]{angle 90}}}] (n11) to[out=0, in=180] (n23);
\draw[postaction={decorate}, decoration={markings, mark=at position 0.5 with {\arrow[line width=0.2mm]{angle 90 reversed}}}] (n16) to[out=0, in=180] (n25);
\draw[postaction={decorate}, decoration={markings, mark=at position 0.5 with {\arrow[line width=0.2mm]{angle 90}}}] (n19) to[out=0, in=180] (n26);
\draw[postaction={decorate}, decoration={markings, mark=at position 0.5 with {\arrow[line width=0.2mm]{angle 90 reversed}}}] (n7) to[out=0, in=180] (n27);
\draw[postaction={decorate}, decoration={markings, mark=at position 0.5 with {\arrow[line width=0.2mm]{angle 90}}}] (n12) to[out=0, in=180] (n28);
\draw[postaction={decorate}, decoration={markings, mark=at position 0.5 with {\arrow[line width=0.2mm]{angle 90 reversed}}}] (n15) to[out=0, in=180] (n29);
\draw[postaction={decorate}, decoration={markings, mark=at position 0.5 with {\arrow[line width=0.2mm]{angle 90}}}] (n20) to[out=0, in=180] (n30);
\end{tikzpicture}\end{gathered}&=&\begin{gathered}\begin{tikzpicture}\matrix[column sep=3mm]{\node at (-0.1,0.25) [anchor=east] {\ensuremath{x}};
\draw[] (-0.1,0.25) to[out=0, in=180] (0.0,0.25);
\path (0.0,0.25) node[coordinate] (n0) {};
\node at (-0.1,3.25) [anchor=east] {\ensuremath{x}};
\draw[] (-0.1,3.25) to[out=0, in=180] (0.0,3.25);
\path (0.0,3.25) node[coordinate] (n1) {};
&
\node at (0.0,1.75) [] (n2) {\ensuremath{}};
\path (n2.east) ++(0.1,-0.75) node[coordinate] (n3) {};
\draw[] (0.0,1.75) to[out=down, in=180] (n3);
\path (n2.east) ++(0.1,0.75) node[coordinate] (n4) {};
\draw[] (0.0,1.75) to[out=up, in=180] (n4);
\node at (0.0,1.75) []  {\ensuremath{}};
&
\node at (0.0,1.75) [] (n5) {\ensuremath{}};
\path (n5.east) ++(0.1,-0.25) node[coordinate] (n6) {};
\draw[] (0.0,1.75) to[out=down, in=180] (n6);
\path (n5.east) ++(0.1,0.25) node[coordinate] (n7) {};
\draw[] (0.0,1.75) to[out=up, in=180] (n7);
\node at (0.0,1.75) []  {\ensuremath{}};
&
\node at (0.0,0.25) [rectangle, draw, fill=white] (n8) {\ensuremath{\beta{}}};
\path (n8.west) ++(-0.1,0.0) node[coordinate] (n9) {};
\draw[] (n9) to[out=0, in=180] (0.0,0.25);
\path (n8.east) ++(0.1,-0.25) node[coordinate] (n10) {};
\draw[] (0.0,0.25) to[out=down, in=180] (n10);
\path (n8.east) ++(0.1,0.25) node[coordinate] (n11) {};
\draw[] (0.0,0.25) to[out=up, in=180] (n11);
\node at (0.0,0.25) [rectangle, draw, fill=white]  {\ensuremath{\beta{}}};
\node at (0.0,1.25) [rectangle, draw, fill=white] (n12) {\ensuremath{\beta{}}};
\path (n12.west) ++(-0.1,-0.25) node[coordinate] (n13) {};
\draw[] (n13) to[out=0, in=down] (0.0,1.25);
\path (n12.west) ++(-0.1,0.25) node[coordinate] (n14) {};
\draw[] (n14) to[out=0, in=up] (0.0,1.25);
\path (n12.east) ++(0.1,0.0) node[coordinate] (n15) {};
\draw[] (0.0,1.25) to[out=0, in=180] (n15);
\node at (0.0,1.25) [rectangle, draw, fill=white]  {\ensuremath{\beta{}}};
\node at (0.0,3.25) [rectangle, draw, fill=white] (n16) {\ensuremath{\alpha{}}};
\path (n16.west) ++(-0.1,0.0) node[coordinate] (n17) {};
\draw[] (n17) to[out=0, in=180] (0.0,3.25);
\path (n16.east) ++(0.1,-0.25) node[coordinate] (n18) {};
\draw[] (0.0,3.25) to[out=down, in=180] (n18);
\path (n16.east) ++(0.1,0.25) node[coordinate] (n19) {};
\draw[] (0.0,3.25) to[out=up, in=180] (n19);
\node at (0.0,3.25) [rectangle, draw, fill=white]  {\ensuremath{\alpha{}}};
&
\node at (0.0,1.25) [rectangle, draw, fill=white] (n20) {\ensuremath{\alpha{}}};
\path (n20.west) ++(-0.1,0.0) node[coordinate] (n21) {};
\draw[] (n21) to[out=0, in=180] (0.0,1.25);
\path (n20.east) ++(0.1,-0.25) node[coordinate] (n22) {};
\draw[] (0.0,1.25) to[out=down, in=180] (n22);
\path (n20.east) ++(0.1,0.25) node[coordinate] (n23) {};
\draw[] (0.0,1.25) to[out=up, in=180] (n23);
\node at (0.0,1.25) [rectangle, draw, fill=white]  {\ensuremath{\alpha{}}};
&
\node at (0.0,0.75) [] (n24) {\ensuremath{}};
\path (n24.west) ++(-0.1,-0.25) node[coordinate] (n25) {};
\draw[] (n25) to[out=0, in=down] (0.0,0.75);
\path (n24.west) ++(-0.1,0.25) node[coordinate] (n26) {};
\draw[] (n26) to[out=0, in=up] (0.0,0.75);
\node at (0.0,0.75) []  {\ensuremath{}};
\node at (0.0,2.75) [] (n27) {\ensuremath{}};
\path (n27.west) ++(-0.1,-0.25) node[coordinate] (n28) {};
\draw[] (n28) to[out=0, in=down] (0.0,2.75);
\path (n27.west) ++(-0.1,0.25) node[coordinate] (n29) {};
\draw[] (n29) to[out=0, in=up] (0.0,2.75);
\node at (0.0,2.75) []  {\ensuremath{}};
&
\node at (0.1,0.0) [anchor=west] {\ensuremath{y}};
\draw[] (0.0,0.0) to[out=0, in=180] (0.1,0.0);
\path (0.0,0.0) node[coordinate] (n30) {};
\node at (0.1,1.5) [anchor=west] {\ensuremath{y}};
\draw[] (0.0,1.5) to[out=0, in=180] (0.1,1.5);
\path (0.0,1.5) node[coordinate] (n31) {};
\node at (0.1,2.0) [anchor=west] {\ensuremath{y}};
\draw[] (0.0,2.0) to[out=0, in=180] (0.1,2.0);
\path (0.0,2.0) node[coordinate] (n32) {};
\node at (0.1,3.5) [anchor=west] {\ensuremath{y}};
\draw[] (0.0,3.5) to[out=0, in=180] (0.1,3.5);
\path (0.0,3.5) node[coordinate] (n33) {};
\\};\draw[postaction={decorate}, decoration={markings, mark=at position 0.5 with {\arrow[line width=0.2mm]{angle 90 reversed}}}] (n0) to[out=0, in=180] (n9);
\draw[postaction={decorate}, decoration={markings, mark=at position 0.5 with {\arrow[line width=0.2mm]{angle 90}}}] (n3) to[out=0, in=180] (n13);
\draw[postaction={decorate}, decoration={markings, mark=at position 0.5 with {\arrow[line width=0.2mm]{angle 90}}}] (n6) to[out=0, in=180] (n14);
\draw[postaction={decorate}, decoration={markings, mark=at position 0.5 with {\arrow[line width=0.2mm]{angle 90}}}] (n1) to[out=0, in=180] (n17);
\draw[postaction={decorate}, decoration={markings, mark=at position 0.5 with {\arrow[line width=0.2mm]{angle 90}}}] (n15) to[out=0, in=180] (n21);
\draw[postaction={decorate}, decoration={markings, mark=at position 0.5 with {\arrow[line width=0.2mm]{angle 90 reversed}}}] (n11) to[out=0, in=180] (n25);
\draw[postaction={decorate}, decoration={markings, mark=at position 0.5 with {\arrow[line width=0.2mm]{angle 90}}}] (n22) to[out=0, in=180] (n26);
\draw[postaction={decorate}, decoration={markings, mark=at position 0.5 with {\arrow[line width=0.2mm]{angle 90 reversed}}}] (n4) to[out=0, in=180] (n28);
\draw[postaction={decorate}, decoration={markings, mark=at position 0.5 with {\arrow[line width=0.2mm]{angle 90}}}] (n18) to[out=0, in=180] (n29);
\draw[postaction={decorate}, decoration={markings, mark=at position 0.5 with {\arrow[line width=0.2mm]{angle 90 reversed}}}] (n10) to[out=0, in=180] (n30);
\draw[postaction={decorate}, decoration={markings, mark=at position 0.5 with {\arrow[line width=0.2mm]{angle 90}}}] (n23) to[out=0, in=180] (n31);
\draw[postaction={decorate}, decoration={markings, mark=at position 0.5 with {\arrow[line width=0.2mm]{angle 90 reversed}}}] (n7) to[out=0, in=180] (n32);
\draw[postaction={decorate}, decoration={markings, mark=at position 0.5 with {\arrow[line width=0.2mm]{angle 90}}}] (n19) to[out=0, in=180] (n33);
\end{tikzpicture}\end{gathered}\\=\begin{gathered}\begin{tikzpicture}\matrix[column sep=3mm]{\node at (-0.1,0.25) [anchor=east] {\ensuremath{x}};
\draw[] (-0.1,0.25) to[out=0, in=180] (0.0,0.25);
\path (0.0,0.25) node[coordinate] (n0) {};
\node at (-0.1,3.25) [anchor=east] {\ensuremath{x}};
\draw[] (-0.1,3.25) to[out=0, in=180] (0.0,3.25);
\path (0.0,3.25) node[coordinate] (n1) {};
&
\node at (0.0,1.75) [] (n2) {\ensuremath{}};
\path (n2.east) ++(0.1,-0.75) node[coordinate] (n3) {};
\draw[] (0.0,1.75) to[out=down, in=180] (n3);
\path (n2.east) ++(0.1,0.75) node[coordinate] (n4) {};
\draw[] (0.0,1.75) to[out=up, in=180] (n4);
\node at (0.0,1.75) []  {\ensuremath{}};
&
\node at (0.0,1.75) [] (n5) {\ensuremath{}};
\path (n5.east) ++(0.1,-0.25) node[coordinate] (n6) {};
\draw[] (0.0,1.75) to[out=down, in=180] (n6);
\path (n5.east) ++(0.1,0.25) node[coordinate] (n7) {};
\draw[] (0.0,1.75) to[out=up, in=180] (n7);
\node at (0.0,1.75) []  {\ensuremath{}};
&
\node at (0.0,0.25) [rectangle, draw, fill=white] (n8) {\ensuremath{\beta{}}};
\path (n8.west) ++(-0.1,0.0) node[coordinate] (n9) {};
\draw[] (n9) to[out=0, in=180] (0.0,0.25);
\path (n8.east) ++(0.1,-0.25) node[coordinate] (n10) {};
\draw[] (0.0,0.25) to[out=down, in=180] (n10);
\path (n8.east) ++(0.1,0.25) node[coordinate] (n11) {};
\draw[] (0.0,0.25) to[out=up, in=180] (n11);
\node at (0.0,0.25) [rectangle, draw, fill=white]  {\ensuremath{\beta{}}};
\node at (0.0,3.25) [rectangle, draw, fill=white] (n12) {\ensuremath{\alpha{}}};
\path (n12.west) ++(-0.1,0.0) node[coordinate] (n13) {};
\draw[] (n13) to[out=0, in=180] (0.0,3.25);
\path (n12.east) ++(0.1,-0.25) node[coordinate] (n14) {};
\draw[] (0.0,3.25) to[out=down, in=180] (n14);
\path (n12.east) ++(0.1,0.25) node[coordinate] (n15) {};
\draw[] (0.0,3.25) to[out=up, in=180] (n15);
\node at (0.0,3.25) [rectangle, draw, fill=white]  {\ensuremath{\alpha{}}};
&
\node at (0.0,0.75) [] (n16) {\ensuremath{}};
\path (n16.west) ++(-0.1,-0.25) node[coordinate] (n17) {};
\draw[] (n17) to[out=0, in=down] (0.0,0.75);
\path (n16.west) ++(-0.1,0.25) node[coordinate] (n18) {};
\draw[] (n18) to[out=0, in=up] (0.0,0.75);
\node at (0.0,0.75) []  {\ensuremath{}};
\node at (0.0,2.75) [] (n19) {\ensuremath{}};
\path (n19.west) ++(-0.1,-0.25) node[coordinate] (n20) {};
\draw[] (n20) to[out=0, in=down] (0.0,2.75);
\path (n19.west) ++(-0.1,0.25) node[coordinate] (n21) {};
\draw[] (n21) to[out=0, in=up] (0.0,2.75);
\node at (0.0,2.75) []  {\ensuremath{}};
&
\node at (0.1,0.0) [anchor=west] {\ensuremath{y}};
\draw[] (0.0,0.0) to[out=0, in=180] (0.1,0.0);
\path (0.0,0.0) node[coordinate] (n22) {};
\node at (0.1,1.5) [anchor=west] {\ensuremath{y}};
\draw[] (0.0,1.5) to[out=0, in=180] (0.1,1.5);
\path (0.0,1.5) node[coordinate] (n23) {};
\node at (0.1,2.0) [anchor=west] {\ensuremath{y}};
\draw[] (0.0,2.0) to[out=0, in=180] (0.1,2.0);
\path (0.0,2.0) node[coordinate] (n24) {};
\node at (0.1,3.5) [anchor=west] {\ensuremath{y}};
\draw[] (0.0,3.5) to[out=0, in=180] (0.1,3.5);
\path (0.0,3.5) node[coordinate] (n25) {};
\\};\draw[postaction={decorate}, decoration={markings, mark=at position 0.5 with {\arrow[line width=0.2mm]{angle 90 reversed}}}] (n0) to[out=0, in=180] (n9);
\draw[postaction={decorate}, decoration={markings, mark=at position 0.5 with {\arrow[line width=0.2mm]{angle 90}}}] (n1) to[out=0, in=180] (n13);
\draw[postaction={decorate}, decoration={markings, mark=at position 0.5 with {\arrow[line width=0.2mm]{angle 90 reversed}}}] (n11) to[out=0, in=180] (n17);
\draw[postaction={decorate}, decoration={markings, mark=at position 0.5 with {\arrow[line width=0.2mm]{angle 90}}}] (n3) to[out=0, in=180] (n18);
\draw[postaction={decorate}, decoration={markings, mark=at position 0.5 with {\arrow[line width=0.2mm]{angle 90 reversed}}}] (n4) to[out=0, in=180] (n20);
\draw[postaction={decorate}, decoration={markings, mark=at position 0.5 with {\arrow[line width=0.2mm]{angle 90}}}] (n14) to[out=0, in=180] (n21);
\draw[postaction={decorate}, decoration={markings, mark=at position 0.5 with {\arrow[line width=0.2mm]{angle 90 reversed}}}] (n10) to[out=0, in=180] (n22);
\draw[postaction={decorate}, decoration={markings, mark=at position 0.5 with {\arrow[line width=0.2mm]{angle 90}}}] (n6) to[out=0, in=180] (n23);
\draw[postaction={decorate}, decoration={markings, mark=at position 0.5 with {\arrow[line width=0.2mm]{angle 90 reversed}}}] (n7) to[out=0, in=180] (n24);
\draw[postaction={decorate}, decoration={markings, mark=at position 0.5 with {\arrow[line width=0.2mm]{angle 90}}}] (n15) to[out=0, in=180] (n25);
\end{tikzpicture}\end{gathered}&=&\begin{gathered}\begin{tikzpicture}\matrix[column sep=3mm]{\node at (-0.1,0.38) [anchor=east] {\ensuremath{x}};
\draw[] (-0.1,0.38) to[out=0, in=180] (0.0,0.38);
\path (0.0,0.38) node[coordinate] (n0) {};
\node at (-0.1,1.5) [anchor=east] {\ensuremath{x}};
\draw[] (-0.1,1.5) to[out=0, in=180] (0.0,1.5);
\path (0.0,1.5) node[coordinate] (n1) {};
&
\node at (0.0,0.38) [rectangle, draw, fill=white] (n2) {\ensuremath{\beta{}}};
\path (n2.west) ++(-0.1,0.0) node[coordinate] (n3) {};
\draw[] (n3) to[out=0, in=180] (0.0,0.38);
\path (n2.east) ++(0.1,-0.38) node[coordinate] (n4) {};
\draw[] (0.0,0.38) to[out=down, in=180] (n4);
\path (n2.east) ++(0.1,0.38) node[coordinate] (n5) {};
\draw[] (0.0,0.38) to[out=up, in=180] (n5);
\node at (0.0,0.38) [rectangle, draw, fill=white]  {\ensuremath{\beta{}}};
\node at (0.0,1.5) [rectangle, draw, fill=white] (n6) {\ensuremath{\alpha{}}};
\path (n6.west) ++(-0.1,0.0) node[coordinate] (n7) {};
\draw[] (n7) to[out=0, in=180] (0.0,1.5);
\path (n6.east) ++(0.1,-0.25) node[coordinate] (n8) {};
\draw[] (0.0,1.5) to[out=down, in=180] (n8);
\path (n6.east) ++(0.1,0.25) node[coordinate] (n9) {};
\draw[] (0.0,1.5) to[out=up, in=180] (n9);
\node at (0.0,1.5) [rectangle, draw, fill=white]  {\ensuremath{\alpha{}}};
&
\node at (0.0,1.0) [] (n10) {\ensuremath{}};
\path (n10.west) ++(-0.1,-0.25) node[coordinate] (n11) {};
\draw[] (n11) to[out=0, in=down] (0.0,1.0);
\path (n10.west) ++(-0.1,0.25) node[coordinate] (n12) {};
\draw[] (n12) to[out=0, in=up] (0.0,1.0);
\node at (0.0,1.0) []  {\ensuremath{}};
&
\node at (0.0,1.0) [] (n13) {\ensuremath{}};
\path (n13.east) ++(0.1,-0.25) node[coordinate] (n14) {};
\draw[] (0.0,1.0) to[out=down, in=180] (n14);
\path (n13.east) ++(0.1,0.25) node[coordinate] (n15) {};
\draw[] (0.0,1.0) to[out=up, in=180] (n15);
\node at (0.0,1.0) []  {\ensuremath{}};
&
\node at (0.1,0.0) [anchor=west] {\ensuremath{y}};
\draw[] (0.0,0.0) to[out=0, in=180] (0.1,0.0);
\path (0.0,0.0) node[coordinate] (n16) {};
\node at (0.1,0.75) [anchor=west] {\ensuremath{y}};
\draw[] (0.0,0.75) to[out=0, in=180] (0.1,0.75);
\path (0.0,0.75) node[coordinate] (n17) {};
\node at (0.1,1.25) [anchor=west] {\ensuremath{y}};
\draw[] (0.0,1.25) to[out=0, in=180] (0.1,1.25);
\path (0.0,1.25) node[coordinate] (n18) {};
\node at (0.1,1.75) [anchor=west] {\ensuremath{y}};
\draw[] (0.0,1.75) to[out=0, in=180] (0.1,1.75);
\path (0.0,1.75) node[coordinate] (n19) {};
\\};\draw[postaction={decorate}, decoration={markings, mark=at position 0.5 with {\arrow[line width=0.2mm]{angle 90 reversed}}}] (n0) to[out=0, in=180] (n3);
\draw[postaction={decorate}, decoration={markings, mark=at position 0.5 with {\arrow[line width=0.2mm]{angle 90}}}] (n1) to[out=0, in=180] (n7);
\draw[postaction={decorate}, decoration={markings, mark=at position 0.5 with {\arrow[line width=0.2mm]{angle 90 reversed}}}] (n5) to[out=0, in=180] (n11);
\draw[postaction={decorate}, decoration={markings, mark=at position 0.5 with {\arrow[line width=0.2mm]{angle 90}}}] (n8) to[out=0, in=180] (n12);
\draw[postaction={decorate}, decoration={markings, mark=at position 0.5 with {\arrow[line width=0.2mm]{angle 90 reversed}}}] (n4) to[out=0, in=180] (n16);
\draw[postaction={decorate}, decoration={markings, mark=at position 0.5 with {\arrow[line width=0.2mm]{angle 90}}}] (n14) to[out=0, in=180] (n17);
\draw[postaction={decorate}, decoration={markings, mark=at position 0.5 with {\arrow[line width=0.2mm]{angle 90 reversed}}}] (n15) to[out=0, in=180] (n18);
\draw[postaction={decorate}, decoration={markings, mark=at position 0.5 with {\arrow[line width=0.2mm]{angle 90}}}] (n9) to[out=0, in=180] (n19);
\end{tikzpicture}\end{gathered}\end{array}\]

\end{proof}

\hypertarget{cartesian-lenses}{%
\subsection{Cartesian Lenses}\label{cartesian-lenses}}

\label{cartesian_lenses}

The canonical special case of optics, that we mentioned in
\autoref{example_lenses}, is cartesian lenses. They arise when we
restrict ourselves to \(C = D = M\) and the monoidal product of \(C\) is
cartesian.

In this setting, we have two important gadgets in \(C\): duplication and
deletion, corresponding respectively to the diagonal map
\(C(x, x \times x)\) and the terminal map \(C(x, I)\). Diagrammatically,
we represent them as follows:

\begin{equation*}\begin{gathered}
\]

\end{proof}

\begin{note*}

Observe that it is diagrammatically clear that the definition of \(put\)
and \(get\) in terms of \(\opval{\alpha}{\beta}{m}\) respects the
equivalence relation induced by the coend.

\end{note*}

We recovered purely diagrammatically the usual formulation of lenses in
terms of \(get\) and \(put\), that we had derived in
\autoref{example_lenses}. In this setting, various properties of lenses
can be investigated purely diagrammatically. As an example, let us
revisit \cite[Proposition 3.0.3]{rileyCategoriesOptics2018}, which
captures the fact that the general notion of lawfulness for optics
coincides with the familiar PutGet, GetPut and PutPut laws
\cite{fosterCombinatorsBidirectionalTree2005} (together called
``very-well-behavedness'') in the case of lenses.

\begin{proposition}[{\cite[Proposition 3.0.3]{rileyCategoriesOptics2018}}]

A lens \(l: \icol{x}{x} \arr \icol{y}{y}\) is lawful iff the following
three laws (respectively called PutGet, GetPut and PutPut) hold in C:

\[
\]

It is straightforward to see that the PutPut and the GetPut laws
together entail this equality. By applying the deletion map successively
to the outputs, one can also show that this equation entails those two
laws, when \(y\) is inhabited.

\end{proof}

\hypertarget{effectful-lenses}{%
\subsection{Effectful Lenses}\label{effectful-lenses}}

We now turn to a less common example: effectful lenses. They stem from
the desire to allow lenses to perform effects while retrieving or
updating data. Various approaches have been proposed; see Abou-Saleh et
al. \cite{abou-salehReflectionsMonadicLenses2016} for an overview.

Let \(C\) be a cartesian category and \(T\) a monad on \(C\). We would
like an optic that resembles cartesian lenses from the previous section,
but with effectful arrows. This means that we would like our arrows to
live in the Kleisli category \(C_T\). This category however is rarely
monoidal, let alone cartesian: for it to be monoidal, the monad \(T\)
would need to be commutative, which rules out large classes of effects
that we might want to use. Thus we cannot reuse the results from the
previous section. Here we can instead make good use of the generality of
monoidal actions: \(C_T\) may not be monoidal, but when \(T\) is strong
(which is rather common), the product of \(C\) extends to an action of
\(C\) on \(C_T\) \cite[Proposition
4.9.3]{rileyCategoriesOptics2018}. This is enough to define an optic for
monadic lenses:

\[
\MLens_T(\icol{x}{u}, \icol{y}{v}) := \int^{c: C} C_T(x, c \times y) \times C_T(c \times v, u)
\]

Let us now investigate the diagrams for such an optic. Recall the
details of how oriented wires are typed. Here the acting category is
\(C\), which means that in a diagram like the following, the typing
rules enforce that \(x\), \(y\) and \(f\) can live in \(C_T\), but
\(a\), \(b\) and \(g\) can only live in \(C\).

\begin{tikzpicture}\matrix[column sep=3mm]{\node at (-0.1,0.0) [anchor=east] {\ensuremath{a}};
\draw[] (-0.1,0.0) to[out=0, in=180] (0.0,0.0);
\path (0.0,0.0) node[coordinate] (n0) {};
\node at (-0.1,0.6) [anchor=east] {\ensuremath{x}};
\draw[] (-0.1,0.6) to[out=0, in=180] (0.0,0.6);
\path (0.0,0.6) node[coordinate] (n1) {};
&
\node at (0.0,0.0) [rectangle, draw, fill=white] (n2) {\ensuremath{\vphantom{f} g}};
\path (n2.west) ++(-0.1,0.0) node[coordinate] (n3) {};
\draw[] (n3) to[out=0, in=180] (0.0,0.0);
\path (n2.east) ++(0.1,0.0) node[coordinate] (n4) {};
\draw[] (0.0,0.0) to[out=0, in=180] (n4);
\node at (0.0,0.0) [rectangle, draw, fill=white]  {\ensuremath{\vphantom{f} g}};
\node at (0.0,0.6) [rectangle, draw, fill=white] (n5) {\ensuremath{f}};
\path (n5.west) ++(-0.1,0.0) node[coordinate] (n6) {};
\draw[] (n6) to[out=0, in=180] (0.0,0.6);
\path (n5.east) ++(0.1,0.0) node[coordinate] (n7) {};
\draw[] (0.0,0.6) to[out=0, in=180] (n7);
\node at (0.0,0.6) [rectangle, draw, fill=white]  {\ensuremath{f}};
&
\node at (0.1,0.0) [anchor=west] {\ensuremath{b}};
\draw[] (0.0,0.0) to[out=0, in=180] (0.1,0.0);
\path (0.0,0.0) node[coordinate] (n8) {};
\node at (0.1,0.6) [anchor=west] {\ensuremath{y}};
\draw[] (0.0,0.6) to[out=0, in=180] (0.1,0.6);
\path (0.0,0.6) node[coordinate] (n9) {};
\\};\draw[postaction={decorate}, decoration={markings, mark=at position 0.5 with {\arrow[line width=0.2mm]{angle 90}}}] (n0) to[out=0, in=180] (n3);
\draw[postaction={decorate}, decoration={markings, mark=at position 0.5 with {\arrow[line width=0.2mm]{angle 90}}}] (n1) to[out=0, in=180] (n6);
\draw[postaction={decorate}, decoration={markings, mark=at position 0.5 with {\arrow[line width=0.2mm]{angle 90}}}] (n4) to[out=0, in=180] (n8);
\draw[postaction={decorate}, decoration={markings, mark=at position 0.5 with {\arrow[line width=0.2mm]{angle 90}}}] (n7) to[out=0, in=180] (n9);
\end{tikzpicture}

This is why we don't need \(C_T\) to be monoidal: this calculus only
allows an arrow in \(C_T\) to be tensored with arrows in \(C\). This
gives us a string diagram calculus where otherwise none would have been
possible.

The distinction between effectful maps (in \(C_T\)) and pure maps (in
\(C\)) is an important aspect of this calculus. Note that every pure map
\(f\) can be lifted to an effectful map written \(\pure{f}\), via a
canonical functor. This functor also respects the actegory structures,
and therefore allows us to embed the pure lenses from the previous
section as monadic lenses.

This calculus even inherits some of the diagrammatic features of the
previous section: the duplication map and the swap still exist and are
represented as before.

\begin{equation}\begin{gathered}
\]

\hypertarget{conclusion-and-future-work}{%
\section{Conclusion and Future Work}\label{conclusion-and-future-work}}

We have presented a calculus that flowed naturally from the Yoneda
embedding of optics into Tambara modules. We have shown that it was
well-suited for expressing common properties of optics and proving
useful theorems generally, some of which would otherwise be painful to
prove. This work however is only the start: it provides the basis of a
calculus, whose expressive power hasn't yet been explored in the
plethora of topics where optics have found a use. In particular, we
expect new specific diagrammatic properties like those of lenses to
arise for other kinds of optics like prisms or traversals.

Then, the calculus could be linked with related constructions, like the
calculus for teleological categories from
\cite{coherence_for_lenses_and_open_games}, or the Int construction
from \cite{joyalTracedMonoidalCategories1996}.

Properties of \(\Tamb\) as a bicategory also seem worth exploring, in
particular its strong similarity with \(\Prof\), and the link between
the properties of \(M\) and those of \(\Tamb\).

Finally, diagrams in \(\Tamb\) with multiple ingoing and outgoing legs
seem to relate to combs as in
\cite{kissingerCategoricalSemanticsCausal2017} and dialogues in the
style of \cite{hedgesGameSemanticsGame2019}; there is potential for
using \(\Tamb\) to provide a basis for general diagrammatic descriptions
of those objects.

\bibliography{biblio-bibtex}

\appendix

\hypertarget{proofs}{%
\section{Proofs}\label{proofs}}

\hypertarget{r-respects-the-actegory-structure}{%
\subsection{\texorpdfstring{\(R\) Respects the Actegory Structure
(\autoref{R_respects_actegory})}{R Respects the Actegory Structure ()}}\label{r-respects-the-actegory-structure}}

\label{proof_R_respects_actegory}

\begin{proof}[Proof ({\autoref{R_respects_actegory}})]

\[
\begin{aligned}
R_I
&= M(\mathord{-}, \mathord{=} \act_M I)
\\ &= M(\mathord{-}, \mathord{=} \otimes I)
\\ &\cong M(\mathord{-}, \mathord{=})
\\
\\
R_x \otimes R_m
&= \int^{n: M} C(\mathord{-}, n \act_C x) \times M(n, \mathord{=} \act_M m)
\\ &= \int^{n: M} C(\mathord{-}, n \act_C x) \times M(n, \mathord{=} \otimes m)
\\ &\cong C(\mathord{-}, (\mathord{=} \otimes m) \act_C x)
\\ &\cong C(\mathord{-}, \mathord{=} \act_C (m \act_C x))
\\ &= R_{m \act_C x}
\\
\\
L_I
&= M(\mathord{-} \act_M I, \mathord{=})
\\ &= M(\mathord{-} \otimes I, \mathord{=})
\\ &\cong M(\mathord{-}, \mathord{=})
\\
\\
L_m \otimes L_x
&= \int^{n: M} M(\mathord{-} \act_M m, n) \times C(n \act_C x, \mathord{=})
\\ &= \int^{n: M} M(\mathord{-} \otimes m, n) \times C(n \act_C x, \mathord{=})
\\ &\cong C((\mathord{-} \otimes m) \act_C x, \mathord{=})
\\ &\cong C(\mathord{-} \act_C (m \act_C x), \mathord{=})
\\ &= L_{m \act_C x}
\end{aligned}
\]

It is easy to check that the corresponding strengths coincide as well.

\end{proof}

\hypertarget{r-and-l-are-adjoint}{%
\subsection{\texorpdfstring{\(R\) and \(L\) Are Adjoint
(\autoref{prop_bending_wires})}{R and L Are Adjoint ()}}\label{r-and-l-are-adjoint}}

\label{proof_bending_wires}

\begin{proof}[Proof ({\autoref{prop_bending_wires}})]

The counit
\(\varepsilon: R_x \otimes L_x \arr C(\mathord{-}, \mathord{=})\) of the
adjunction in \emph{Prof} is given by composition in \(C\). We need it
to commute with strength:

\[
\begin{tikzcd}
\int^b C(a, b \act x) \otimes C(b \act x, c)
    \arrow{r}{\fatsemi}
    \arrow[swap]{d}{\strength}
& C(a, c)
    \arrow{d}{\strength}
\\%
\int^{b'} C(m \act a, b' \act x) \otimes C(b' \act x, m \act c)
    \arrow{r}{\fatsemi}
& C(m \act a, m \act c)
\end{tikzcd}
\]

We inline the definition of \(\strength\), and move the coends out by
continuity, to get an equivalent square:

\[
\begin{tikzcd}
C(a, b \act x) \otimes C(b \act x, c)
    \arrow{r}{\fatsemi}
    \arrow[swap]{d}{(m \act \mathord{-}) \otimes (m \act \mathord{-})}
& C(a, c)
    \arrow{d}{(m \act \mathord{-})}
\\%
C(m \act a, m \act (b \act x)) \otimes C(m \act (b \act x), m \act c)
    \arrow{r}{\fatsemi}
    \arrow[swap]{d}{C(id, a^{-1}) \otimes C(a, id)}
& C(m \act a, m \act c)
    \arrow{d}{id}
\\%
C(m \act a, (m \otimes b) \act x) \otimes C((m \otimes b) \act x, m \act c)
    \arrow{r}{\fatsemi}
& C(m \act a, m \act c)
\end{tikzcd}
\]

The top square commutes by functoriality of \((m \act \mathord{-})\);
the bottom one by the fact that \(a^{-1} \fatsemi a = id\).

Similarly, the unit also lives in \(\Tamb\). This is enough for the
adjunction to lift from \emph{Prof} to \(\Tamb\).

\end{proof}

\hypertarget{diagram-for-simple-arrows}{%
\subsection{\texorpdfstring{Diagram for Simple Arrows
(\autoref{diagram_embedding})}{Diagram for Simple Arrows ()}}\label{diagram-for-simple-arrows}}

\label{proof_diagram_embedding}

\begin{proof}[Proof ({\autoref{diagram_embedding}})]

The diagram corresponds to the 2-cell \(R_f \otimes L_g\).

It has type \[
\begin{aligned}
R_f \otimes L_g
   &: R_x \otimes L_u \arr R_y \otimes L_v
\\ &= \int_{a b} (\int^m R_x(a, m) \times L_u(m, b)) \arr (\int^m R_y(a, m) \times L_v(m, b))
\end{aligned}
\]

And value \[
\begin{aligned}
(R_f \otimes L_g)(\opval{p}{q}{m})
   &= \opval{R_f(p)}{L_g(q)}{m}
\\ &= \opval{p \fatsemi (m \act f)}{(m \act g) \fatsemi q}{m}
\end{aligned}
\]

To get the preimage through \(Y\), we apply this map to the identity
optic.

\[
\begin{aligned}
(R_f \otimes L_g) (id_{\icol{x}{u}})
&= (R_f \otimes L_g) (\opval{\lambda^{-1}_x}{\lambda_u}{I})
\\ &= \opval{\lambda^{-1}_x \fatsemi (I \act f)}{(I \act g) \fatsemi \lambda_u}{I}
\\ &= \opval{f \fatsemi \lambda^{-1}_y}{\lambda_v \fatsemi g}{I}
\end{aligned}
\]

\end{proof}

\hypertarget{simple-arrows-embed-fully-faithfully}{%
\subsection{\texorpdfstring{Simple Arrows Embed Fully-Faithfully
(\autoref{arrows_ff})}{Simple Arrows Embed Fully-Faithfully ()}}\label{simple-arrows-embed-fully-faithfully}}

\label{proof_arrows_ff}

\begin{proof}[Proof ({\autoref{arrows_ff}})]

We calculate:

\[
\begin{aligned}
& \Optic_{C,M}(\icol{x}{I}, \icol{y}{I})
\\ &= \int^{m} C(x, m \act_C y) \times M(m \act_M I, I)
\\ &= \int^{m} C(x, m \act_C y) \times M(m \otimes I, I)
\\ &\cong \int^{m} C(x, m \act_C y) \times M(m, I)
\\ &\cong C(x, I \act_C y)
\\ &\cong C(x, y)
\end{aligned}
\]

By following the isomorphisms, we get that the reverse direction is the
function
\(f: C(x, y) \mapsto \opval{f \fatsemi \lambda_y^{-1}}{\lambda_I}{I}\),
which as we saw previously corresponds to
\(f \mapsto \iota(f, id_I) = R_f \otimes L_{id_I} = R_f\).

\end{proof}

\hypertarget{representation-theorem}{%
\subsection{\texorpdfstring{Representation Theorem
(\autoref{representation_theorem})}{Representation Theorem ()}}\label{representation-theorem}}

\label{proof_representation_theorem}

\begin{lemma}

The optic corresponding to this diagram is
\(\opval{id_{m \act x}}{id_{m \act u}}{m}\).

\begin{tikzpicture}\matrix[column sep=3mm]{\node at (-0.1,0.0) [anchor=east] {\ensuremath{u}};
\draw[] (-0.1,0.0) to[out=0, in=180] (0.0,0.0);
\path (0.0,0.0) node[coordinate] (n0) {};
\node at (-0.1,0.6) [anchor=east] {\ensuremath{m}};
\draw[] (-0.1,0.6) to[out=0, in=180] (0.0,0.6);
\path (0.0,0.6) node[coordinate] (n1) {};
\node at (-0.1,1.2) [anchor=east] {\ensuremath{m}};
\draw[] (-0.1,1.2) to[out=0, in=180] (0.0,1.2);
\path (0.0,1.2) node[coordinate] (n2) {};
\node at (-0.1,1.8) [anchor=east] {\ensuremath{x}};
\draw[] (-0.1,1.8) to[out=0, in=180] (0.0,1.8);
\path (0.0,1.8) node[coordinate] (n3) {};
&
\node at (0.0,0.9) [] (n4) {\ensuremath{}};
\path (n4.west) ++(-0.1,-0.3) node[coordinate] (n5) {};
\draw[] (n5) to[out=0, in=down] (0.0,0.9);
\path (n4.west) ++(-0.1,0.3) node[coordinate] (n6) {};
\draw[] (n6) to[out=0, in=up] (0.0,0.9);
\node at (0.0,0.9) []  {\ensuremath{}};
&
\node at (0.1,0.0) [anchor=west] {\ensuremath{u}};
\draw[] (0.0,0.0) to[out=0, in=180] (0.1,0.0);
\path (0.0,0.0) node[coordinate] (n7) {};
\node at (0.1,1.8) [anchor=west] {\ensuremath{x}};
\draw[] (0.0,1.8) to[out=0, in=180] (0.1,1.8);
\path (0.0,1.8) node[coordinate] (n8) {};
\\};\draw[postaction={decorate}, decoration={markings, mark=at position 0.5 with {\arrow[line width=0.2mm]{angle 90 reversed}}}] (n1) to[out=0, in=180] (n5);
\draw[postaction={decorate}, decoration={markings, mark=at position 0.5 with {\arrow[line width=0.2mm]{angle 90}}}] (n2) to[out=0, in=180] (n6);
\draw[postaction={decorate}, decoration={markings, mark=at position 0.5 with {\arrow[line width=0.2mm]{angle 90 reversed}}}] (n0) to[out=0, in=180] (n7);
\draw[postaction={decorate}, decoration={markings, mark=at position 0.5 with {\arrow[line width=0.2mm]{angle 90}}}] (n3) to[out=0, in=180] (n8);
\end{tikzpicture}

\end{lemma}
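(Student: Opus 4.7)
The plan is to compute the optic associated to this diagram via the Yoneda lemma. Let $\iota_R \colon R_{m \act x} \xrightarrow{\sim} R_x \otimes R_m$ and $\iota_L \colon L_{m \act u} \xrightarrow{\sim} L_m \otimes L_u$ be the isomorphisms of \autoref{R_respects_actegory}, and $\varepsilon_m \colon R_m \otimes L_m \to I$ the counit of the adjunction from \autoref{prop_bending_wires}. The diagram represents the 2-cell
\[
\phi \;:=\; (\iota_R \otimes \iota_L) \fatsemi (1_{R_x} \otimes \varepsilon_m \otimes 1_{L_u}) \colon R_{m \act x} \otimes L_{m \act u} \to R_x \otimes L_u.
\]
By \autoref{psh_op_is_tamb}, $\phi$ is a natural transformation $Y\icol{m \act x}{m \act u} \to Y\icol{x}{u}$, and by the Yoneda lemma it determines a unique optic $l \colon \icol{m \act x}{m \act u} \to \icol{x}{u}$, obtained by evaluating the $(m \act x, m \act u)$-component of $\phi$ at the identity optic.

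The identity optic on $\icol{m \act x}{m \act u}$ is (as in \autoref{proof_diagram_embedding}) the coend class $\opval{\lambda^{-1}_{m \act x}}{\lambda_{m \act u}}{I}$. The next step is to apply $\iota_R \otimes \iota_L$: the isomorphism $\iota_R$ arises from the chain $C(m \act x, c \act (m \act x)) \cong C(m \act x, (c \otimes m) \act x) \cong \int^n C(m \act x, n \act x) \times M(n, c \otimes m)$ combining the associator with co-Yoneda, and by the actegory triangle axiom $(\lambda^{-1}_m \act x) \fatsemi a_{I, m, x} = \lambda^{-1}_{m \act x}$, the element $\lambda^{-1}_{m \act x}$ is represented at index $n = m$ by the pair $(\mathrm{id}_{m \act x}, \lambda^{-1}_m)$. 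Dually, $\iota_L$ sends $\lambda_{m \act u}$ to the pair $(\lambda_m, \mathrm{id}_{m \act u})$ at $n = m$.

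Applying $\varepsilon_m$, which is composition in $M$, to the middle pair yields $\lambda^{-1}_m \fatsemi \lambda_m = \mathrm{id}_m$; this identity in $I_M$ is then absorbed by the canonical unitors $R_x \otimes I_M \cong R_x$ and $I_M \otimes L_u \cong L_u$, leaving the pair $(\mathrm{id}_{m \act x}, \mathrm{id}_{m \act u})$ at coend index $n = m$, i.e., the element $\opval{\mathrm{id}_{m \act x}}{\mathrm{id}_{m \act u}}{m}$ of $\int^n C(m \act x, n \act x) \times D(n \act u, m \act u) = (R_x \otimes L_u)(m \act x, m \act u)$. This establishes the claim. The main obstacle is keeping the chain of actegory associators and unitors straight across the coend manipulations; they ultimately all collapse by the triangle axioms, but organising the computation as a sequence of coend reductions rather than a pointwise chase keeps it tractable.
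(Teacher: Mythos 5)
Your proof is correct and follows essentially the same route as the paper's: both identify the optic by applying the 2-cell to the identity optic $\opval{\lambda^{-1}_{m\act x}}{\lambda_{m\act u}}{I}$ (Yoneda) and then collapse the resulting associators and unitors via the coend relation and the triangle axioms. The only organisational difference is that the paper first records the action of the cap on a general element $\opval{\alpha}{\beta}{n}$ and then specialises, whereas you chase only the identity element through $\iota_R\otimes\iota_L$ and $\varepsilon_m$ — which is, if anything, a slightly leaner way to present the same computation.
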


\begin{proof}

Let us name the map corresponding to this diagram \(\Lambda_{x, m, u}\).

Knowing the action of the cap \(\varepsilon\), we obtain by a tedious
calculation that we will omit here:

\[
\begin{aligned}
\Lambda_{x, m, u}
&: Y \icol{m \act x}{m \act u} \arr Y \icol{x}{u}
\\ &= \opval{\alpha}{\beta}{n} \mapsto
      \opval{\alpha \fatsemi a_{n, m, x}^{-1}}{a_{n, m, u} \fatsemi \beta}{n \otimes m}
\end{aligned}
\]

Thus the corresponding optic is: \[
\begin{aligned}
\Lambda_{x, m, u} (id_{\icol{m \act x}{m \act u}})
&= \Lambda_{x, m, u} (\opval{\lambda^{-1}_{m \act x}}{\lambda_{m \act u}}{I})
\\ &= \opval{\lambda^{-1}_{m \act x} \fatsemi a_{I, m, x}^{-1}}
            {a_{I, m, u} \fatsemi \lambda_{m \act u}}
            {I \otimes m}
\\ &= \opval{\lambda^{-1}_{m} \act x}{\lambda_{m} \act u}{I \otimes m}
\\ &= \opval{(\lambda^{-1}_{m} \fatsemi \lambda_{m}) \act x}{id_{m \act u}}{m}
\\ &= \opval{id_{m \act x}}{id_{m \act u}}{m}
\end{aligned}
\]

\end{proof}

\begin{proof}[Proof ({\autoref{representation_theorem}})]

The right-hand-side diagram is the composition of two optics of which we
know the value: the first is
\(\opval{\alpha \fatsemi \lambda^{-1}_{m \act y}}{\lambda_{m \act v} \fatsemi \beta}{I}\);
the second is \(\opval{id_{m \act y}}{id_{m \act v}}{m}\).

The resulting optic is thus their composition:

\[
\begin{aligned}
& \opval{\alpha \fatsemi \lambda^{-1}_{m \act y}}{\lambda_{m \act v} \fatsemi \beta}{I} \fatsemi \opval{id_{m \act y}}{id_{m \act v}}{m}
\\ &= \opval
    {\alpha \fatsemi \lambda^{-1}_{m \act y} \fatsemi (I \act id_{m \act y}) \fatsemi a^{-1}_{m,I}}
    {a_{m,I} \fatsemi (I \act id_{m \act v}) \fatsemi \lambda_{m \act v} \fatsemi \beta}
    {I \otimes m}
\\ &= \opval
    {\alpha \fatsemi \lambda^{-1}_{m \act y} \fatsemi a^{-1}_{m,I}}
    {a_{m,I} \fatsemi \lambda_{m \act v} \fatsemi \beta}
    {I \otimes m}
\\ &= \opval{\alpha \fatsemi (\lambda^{-1}_{I} \act y)}{(\lambda_{I} \act v) \fatsemi \beta}{I \otimes m}
\\ &= \opval{\alpha \fatsemi (\lambda^{-1}_{I} \act y) \fatsemi (\lambda_{I} \act y)}{\beta}{m}
\\ &= \opval{\alpha}{\beta}{m}
\end{aligned}
\]

\end{proof}

\hypertarget{lawfulness-in-diagrams}{%
\subsection{\texorpdfstring{Lawfulness in Diagrams
(\autoref{lawful_optic})}{Lawfulness in Diagrams ()}}\label{lawfulness-in-diagrams}}

\label{proof_lawful_optic}

\begin{proof}[Proof ({\autoref{lawful_optic}})]

Lawfulness in \cite[Section 3]{rileyCategoriesOptics2018} is based
on three maps named \(outside\), \(once\), and \(twice\). Unpacking the
definitions, those three maps applied to an optic \(l\) correspond
respectively to the three diagrams:

\[\begin{array}{ccc}\begin{gathered}\begin{tikzpicture}\matrix[column sep=3mm]{\node at (-0.1,0.0) [anchor=east] {\ensuremath{x}};
\draw[] (-0.1,0.0) to[out=0, in=180] (0.0,0.0);
\path (0.0,0.0) node[coordinate] (n0) {};
\node at (-0.1,0.6) [anchor=east] {\ensuremath{x}};
\draw[] (-0.1,0.6) to[out=0, in=180] (0.0,0.6);
\path (0.0,0.6) node[coordinate] (n1) {};
&
\node at (0.0,0.3) [rectangle, draw, fill=white] (n2) {\ensuremath{l}};
\path (n2.west) ++(-0.1,-0.3) node[coordinate] (n3) {};
\draw[] (n3) to[out=0, in=down] (0.0,0.3);
\path (n2.west) ++(-0.1,0.3) node[coordinate] (n4) {};
\draw[] (n4) to[out=0, in=up] (0.0,0.3);
\path (n2.east) ++(0.1,-0.3) node[coordinate] (n5) {};
\draw[] (0.0,0.3) to[out=down, in=180] (n5);
\path (n2.east) ++(0.1,0.3) node[coordinate] (n6) {};
\draw[] (0.0,0.3) to[out=up, in=180] (n6);
\node at (0.0,0.3) [rectangle, draw, fill=white]  {\ensuremath{l}};
&
\node at (0.0,0.3) [] (n7) {\ensuremath{}};
\path (n7.west) ++(-0.1,-0.3) node[coordinate] (n8) {};
\draw[] (n8) to[out=0, in=down] (0.0,0.3);
\path (n7.west) ++(-0.1,0.3) node[coordinate] (n9) {};
\draw[] (n9) to[out=0, in=up] (0.0,0.3);
\node at (0.0,0.3) []  {\ensuremath{}};
&
\\};\draw[postaction={decorate}, decoration={markings, mark=at position 0.5 with {\arrow[line width=0.2mm]{angle 90 reversed}}}] (n0) to[out=0, in=180] (n3);
\draw[postaction={decorate}, decoration={markings, mark=at position 0.5 with {\arrow[line width=0.2mm]{angle 90}}}] (n1) to[out=0, in=180] (n4);
\draw[postaction={decorate}, decoration={markings, mark=at position 0.5 with {\arrow[line width=0.2mm]{angle 90 reversed}}}] (n5) to[out=0, in=180] (n8);
\draw[postaction={decorate}, decoration={markings, mark=at position 0.5 with {\arrow[line width=0.2mm]{angle 90}}}] (n6) to[out=0, in=180] (n9);
\end{tikzpicture}\end{gathered}&\begin{gathered}\begin{tikzpicture}\matrix[column sep=3mm]{\node at (-0.1,0.6) [anchor=east] {\ensuremath{x}};
\draw[] (-0.1,0.6) to[out=0, in=180] (0.0,0.6);
\path (0.0,0.6) node[coordinate] (n0) {};
\node at (-0.1,1.2) [anchor=east] {\ensuremath{x}};
\draw[] (-0.1,1.2) to[out=0, in=180] (0.0,1.2);
\path (0.0,1.2) node[coordinate] (n1) {};
&
\node at (0.0,0.9) [rectangle, draw, fill=white] (n2) {\ensuremath{l}};
\path (n2.west) ++(-0.1,-0.3) node[coordinate] (n3) {};
\draw[] (n3) to[out=0, in=down] (0.0,0.9);
\path (n2.west) ++(-0.1,0.3) node[coordinate] (n4) {};
\draw[] (n4) to[out=0, in=up] (0.0,0.9);
\path (n2.east) ++(0.1,-0.9) node[coordinate] (n5) {};
\draw[] (0.0,0.9) to[out=down, in=180] (n5);
\path (n2.east) ++(0.1,0.9) node[coordinate] (n6) {};
\draw[] (0.0,0.9) to[out=up, in=180] (n6);
\node at (0.0,0.9) [rectangle, draw, fill=white]  {\ensuremath{l}};
&
\node at (0.0,0.9) [] (n7) {\ensuremath{}};
\path (n7.east) ++(0.1,-0.3) node[coordinate] (n8) {};
\draw[] (0.0,0.9) to[out=down, in=180] (n8);
\path (n7.east) ++(0.1,0.3) node[coordinate] (n9) {};
\draw[] (0.0,0.9) to[out=up, in=180] (n9);
\node at (0.0,0.9) []  {\ensuremath{}};
&
\node at (0.1,0.0) [anchor=west] {\ensuremath{y}};
\draw[] (0.0,0.0) to[out=0, in=180] (0.1,0.0);
\path (0.0,0.0) node[coordinate] (n10) {};
\node at (0.1,0.6) [anchor=west] {\ensuremath{y}};
\draw[] (0.0,0.6) to[out=0, in=180] (0.1,0.6);
\path (0.0,0.6) node[coordinate] (n11) {};
\node at (0.1,1.2) [anchor=west] {\ensuremath{y}};
\draw[] (0.0,1.2) to[out=0, in=180] (0.1,1.2);
\path (0.0,1.2) node[coordinate] (n12) {};
\node at (0.1,1.8) [anchor=west] {\ensuremath{y}};
\draw[] (0.0,1.8) to[out=0, in=180] (0.1,1.8);
\path (0.0,1.8) node[coordinate] (n13) {};
\\};\draw[postaction={decorate}, decoration={markings, mark=at position 0.5 with {\arrow[line width=0.2mm]{angle 90 reversed}}}] (n0) to[out=0, in=180] (n3);
\draw[postaction={decorate}, decoration={markings, mark=at position 0.5 with {\arrow[line width=0.2mm]{angle 90}}}] (n1) to[out=0, in=180] (n4);
\draw[postaction={decorate}, decoration={markings, mark=at position 0.5 with {\arrow[line width=0.2mm]{angle 90 reversed}}}] (n5) to[out=0, in=180] (n10);
\draw[postaction={decorate}, decoration={markings, mark=at position 0.5 with {\arrow[line width=0.2mm]{angle 90}}}] (n8) to[out=0, in=180] (n11);
\draw[postaction={decorate}, decoration={markings, mark=at position 0.5 with {\arrow[line width=0.2mm]{angle 90 reversed}}}] (n9) to[out=0, in=180] (n12);
\draw[postaction={decorate}, decoration={markings, mark=at position 0.5 with {\arrow[line width=0.2mm]{angle 90}}}] (n6) to[out=0, in=180] (n13);
\end{tikzpicture}\end{gathered}&\begin{gathered}\begin{tikzpicture}\matrix[column sep=3mm]{\node at (-0.1,0.0) [anchor=east] {\ensuremath{x}};
\draw[] (-0.1,0.0) to[out=0, in=180] (0.0,0.0);
\path (0.0,0.0) node[coordinate] (n0) {};
\node at (-0.1,1.8) [anchor=east] {\ensuremath{x}};
\draw[] (-0.1,1.8) to[out=0, in=180] (0.0,1.8);
\path (0.0,1.8) node[coordinate] (n1) {};
&
\node at (0.0,0.9) [] (n2) {\ensuremath{}};
\path (n2.east) ++(0.1,-0.3) node[coordinate] (n3) {};
\draw[] (0.0,0.9) to[out=down, in=180] (n3);
\path (n2.east) ++(0.1,0.3) node[coordinate] (n4) {};
\draw[] (0.0,0.9) to[out=up, in=180] (n4);
\node at (0.0,0.9) []  {\ensuremath{}};
&
\node at (0.0,0.3) [rectangle, draw, fill=white] (n5) {\ensuremath{l}};
\path (n5.west) ++(-0.1,-0.3) node[coordinate] (n6) {};
\draw[] (n6) to[out=0, in=down] (0.0,0.3);
\path (n5.west) ++(-0.1,0.3) node[coordinate] (n7) {};
\draw[] (n7) to[out=0, in=up] (0.0,0.3);
\path (n5.east) ++(0.1,-0.3) node[coordinate] (n8) {};
\draw[] (0.0,0.3) to[out=down, in=180] (n8);
\path (n5.east) ++(0.1,0.3) node[coordinate] (n9) {};
\draw[] (0.0,0.3) to[out=up, in=180] (n9);
\node at (0.0,0.3) [rectangle, draw, fill=white]  {\ensuremath{l}};
\node at (0.0,1.5) [rectangle, draw, fill=white] (n10) {\ensuremath{l}};
\path (n10.west) ++(-0.1,-0.3) node[coordinate] (n11) {};
\draw[] (n11) to[out=0, in=down] (0.0,1.5);
\path (n10.west) ++(-0.1,0.3) node[coordinate] (n12) {};
\draw[] (n12) to[out=0, in=up] (0.0,1.5);
\path (n10.east) ++(0.1,-0.3) node[coordinate] (n13) {};
\draw[] (0.0,1.5) to[out=down, in=180] (n13);
\path (n10.east) ++(0.1,0.3) node[coordinate] (n14) {};
\draw[] (0.0,1.5) to[out=up, in=180] (n14);
\node at (0.0,1.5) [rectangle, draw, fill=white]  {\ensuremath{l}};
&
\node at (0.1,0.0) [anchor=west] {\ensuremath{y}};
\draw[] (0.0,0.0) to[out=0, in=180] (0.1,0.0);
\path (0.0,0.0) node[coordinate] (n15) {};
\node at (0.1,0.6) [anchor=west] {\ensuremath{y}};
\draw[] (0.0,0.6) to[out=0, in=180] (0.1,0.6);
\path (0.0,0.6) node[coordinate] (n16) {};
\node at (0.1,1.2) [anchor=west] {\ensuremath{y}};
\draw[] (0.0,1.2) to[out=0, in=180] (0.1,1.2);
\path (0.0,1.2) node[coordinate] (n17) {};
\node at (0.1,1.8) [anchor=west] {\ensuremath{y}};
\draw[] (0.0,1.8) to[out=0, in=180] (0.1,1.8);
\path (0.0,1.8) node[coordinate] (n18) {};
\\};\draw[postaction={decorate}, decoration={markings, mark=at position 0.5 with {\arrow[line width=0.2mm]{angle 90 reversed}}}] (n0) to[out=0, in=180] (n6);
\draw[postaction={decorate}, decoration={markings, mark=at position 0.5 with {\arrow[line width=0.2mm]{angle 90}}}] (n3) to[out=0, in=180] (n7);
\draw[postaction={decorate}, decoration={markings, mark=at position 0.5 with {\arrow[line width=0.2mm]{angle 90 reversed}}}] (n4) to[out=0, in=180] (n11);
\draw[postaction={decorate}, decoration={markings, mark=at position 0.5 with {\arrow[line width=0.2mm]{angle 90}}}] (n1) to[out=0, in=180] (n12);
\draw[postaction={decorate}, decoration={markings, mark=at position 0.5 with {\arrow[line width=0.2mm]{angle 90 reversed}}}] (n8) to[out=0, in=180] (n15);
\draw[postaction={decorate}, decoration={markings, mark=at position 0.5 with {\arrow[line width=0.2mm]{angle 90}}}] (n9) to[out=0, in=180] (n16);
\draw[postaction={decorate}, decoration={markings, mark=at position 0.5 with {\arrow[line width=0.2mm]{angle 90 reversed}}}] (n13) to[out=0, in=180] (n17);
\draw[postaction={decorate}, decoration={markings, mark=at position 0.5 with {\arrow[line width=0.2mm]{angle 90}}}] (n14) to[out=0, in=180] (n18);
\end{tikzpicture}\end{gathered}\end{array}\]

The interesting insight is that the complicated \(Optic^2_M\) coend from
Riley's paper can be easily constructed diagrammatically by tensoring
oriented wires as above. The theorem then follows directly from Riley's
definition of lawfulness.

\end{proof}

\end{document}